\newtheorem{theorem}{Theorem}[section]
\newtheorem{proposition}[theorem]{Proposition}
\newtheorem{corollary}[theorem]{Corollary}
\newtheorem{lemma}[theorem]{Lemma}
\newtheorem{question}[theorem]{Question}
\theoremstyle{definition}
\newtheorem{remark}[theorem]{Remark}
\newtheorem{definition}[theorem]{Definition}
\newenvironment{psmallmatrix}
  {\left(\begin{smallmatrix}}
  {\end{smallmatrix}\right)}
\newcommand{\MCG}{\mathrm{Mod}}
\newcommand{\PMCG}{\mathrm{PMod}}
\newcommand{\Aff}{\mathrm{Aff}}
\newcommand{\Sep}{\mathrm{Sep}}
\newcommand{\Sing}{\mathrm{Sing}}
\newcommand{\Fix}{\mathrm{Fix}}
\newcommand{\PSL}{\mathrm{PSL}}
\newcommand{\SL}{\mathrm{SL}}
\newcommand{\SO}{\mathrm{SO}}
\newcommand{\Ends}{\mathrm{Ends}}
\def\cG{\mathcal{G}}
\def\cT{\mathcal{T}}
\def\cH{\mathcal{H}}
\def\cV{\mathcal{V}}
\def\R{\mathbb{R}}
\def\C{\mathbb{C}}
\def\N{\mathbb{N}}
\def\Z{\mathbb{Z}}
\begin{document}
\title[Loxodromic elements in big mapping class groups]{Loxodromic elements in big mapping class groups via the Hooper-Thurston-Veech construction}


\author{Israel Morales and Ferr\'an Valdez}
\date{\today}

\address{
Ferr\'an Valdez \newline
Centro de Ciencias Matem\'aticas, UNAM, Campus Morelia, C.P. 58190, Morelia, \newline
Michoac\'an,  M\'exico.
}
\email{ferran@matmor.unam.mx}
\address{
Israel Morales \newline
Centro de Ciencias Matem\'aticas, UNAM, Campus Morelia, C.P. 58190, Morelia, \newline
Michoac\'an,  M\'exico.
}
\email{fast.imj@gmail.com}

\begin{abstract}
Let $S$ be an infinite-type surface and $p\in S$. We show that the Thurston-Veech construction for pseudo-Anosov elements, adapted for infinite-type surfaces, produces infinitely many loxodromic elements for the action of $\MCG(S;p)$ on the loop graph $L(S;p)$ that do not leave any finite-type subsurface $S'\subset S$ invariant. Moreover, in the language of~\cite{BaWa18B}, Thurston-Veech's construction produces loxodromic elements of any weight. As a consequence of Bavard and Walker's work, any subgroup of $\MCG(S;p)$ containing two "Thurston-Veech loxodromics" of different weight has an infinite-dimensional space of non-trivial quasimorphisms.
\end{abstract}

\maketitle

\section{Introduction}

Let $S$ be an orientable infinite-type surface, $p\in S$ a marked point\footnote{Through this text we think of $p$ as either a marked point or a marked puncture.} and $\MCG(S;p)$
the quotient of ${\rm Homeo}^+(S;p)$ by isotopies which fix $p$ for all times. This group is related to the (big) mapping class group\footnote{$\MCG(S)$ denotes ${\rm Homeo}^+(S;p)$ modulo isotopy.} of $S$ via Birman's exact sequence:
$$
1\longrightarrow\pi_1(S,p)\overset{Push}{\longrightarrow}\MCG(S;p)\overset{Forget}{\longrightarrow}\MCG(S)\longrightarrow 1.
$$
The group $\MCG(S;p)$ acts by isometries on the (Gromov-hyperbolic, infinite-diameter) loop graph $L(S;p)$, see~\cite{BaWa18A} and \cite{BaWa18B}. Up to date the only known examples of loxodromic elements for this action are:
\begin{enumerate}
\item The loxodromic element $h\in S=\mathbb{S}^2\setminus\{p=\infty \cup \rm Cantor\}$ defined by J. Bavard in~\cite{Bavard16}. In this case $\MCG(S;p)=\MCG(S)$, $h$ is not in the pure mapping class group $\PMCG(S)$, does not preserve any finite type subsurface and, in the language of~\cite{BaWa18B}, has weight 1.
\item Elements defined by pseudo-Anosov homeomorphisms supported in a finite-type subsurface $S'\subset S$ containing $p$. All these live in $\PMCG(S,p)$. Moreover, for any given $m\in\N$, $S'$ can be chosen so that the loxodromic in question has weight $m$.
\end{enumerate}
In~\cite{BaWa18B}, the authors remark that \emph{it would be interesting to construct examples of loxodromic elements of weight greater than 1 which do not preserve any finite type subsurface (up to isotopy)}. 

The purpose of this article is to show that such examples can be obtained by adapting the Thurston-Veech construction for pseudo-Anosov elements (see ~\cite{Thurston88}, ~\cite{Veech89} or~\cite{FarbMArgalit12}) to the context of infinite-type surfaces. This adaptation is an extension of Thurston and Veech's ideas built upon previous work by Hooper~\cite{Hooper15}, hence we call it the Hooper-Thurston-Veech construction.  Roughly speaking, we show that if one takes as input an appropiate pair of multicurves $\alpha$, $\beta$ whose union fills $S$, then the subgroup of $\MCG(S,p)$ generated by the (right) multitwists $T_\alpha$ and $T_\beta$ contains infinitely many loxodromic elements. More precisely, our main result is:


\begin{theorem}
	\label{THM:Loxodromics}
Let $S$ be an orientable infinite-type surface, $p\in S$ a marked point and $m\in\mathbb{N}$. Let $\alpha=\{\alpha_i\}_{i\in I}$ and $\beta=\{\beta_j\}_{j\in J}$ be multicurves in minimal position whose union fills $S$ and such that:
\begin{enumerate}
\item the configuration graph\footnote{The vertices of this graph are $I\cup J$. There is an edge between $i\in I$ and $j\in J$ for every point of intersection between $\alpha_i$ and $\beta_j$.} $\mathcal{G}(\alpha\cup\beta)$ is of finite valence\footnote{Valence here means the supremum of $degree(v)$ where $v$ is a vertex of the graph in question.},
\item for some fixed $N\in\mathbb{N}$, every connected component $D$ of $S\setminus\alpha\cup\beta$ is a polygon or a once-punctured polygon\footnote{The boundary of any connected component of $S\setminus\alpha\cup\beta$ is formed by arcs contained in the curves forming $\alpha\cup\beta$ and hence we can think of them as (topological) polygons.} with at most $N$ sides and
\item the connected component of $S\setminus\alpha\cup\beta$ containing $p$  is a $2m$-polygon.
\end{enumerate}
If  $T_\alpha$, $T_\beta\in\MCG(S;p)$ are the (right) multitwists w.r.t. $\alpha$ and $\beta$ respectively then any $f\in\MCG(S;p)$ in the positive semigroup generated by $T_\alpha$ and $T_\beta^{-1}$ given by a word on which both generators appear is a loxodromic element of weight $m$ for the action of $\MCG(S;p)$ on the loop graph $L(S;p)$.
\end{theorem}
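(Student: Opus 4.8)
The plan is to run the Hooper-Thurston-Veech machinery behind the construction: first build a singular flat (half-translation) structure $\mathcal X$ on $S$ adapted to $\alpha\cup\beta$, then identify $f$ with an affine automorphism of $\mathcal X$ whose linear part is hyperbolic, and finally transfer the resulting north-south dynamics on the circle of directions into loxodromic dynamics on $L(S;p)$.

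\emph{The flat structure.} Hypothesis (1) says precisely that the adjacency operator $\mathcal A$ of the bipartite graph $\mathcal G(\alpha\cup\beta)$ is bounded, hence it admits a positive $\lambda$-eigenfunction $\phi$ for a suitable $\lambda>0$. Attaching to $(\alpha,\beta,\phi)$ the corresponding Hooper flat structure $\mathcal X$ produces a half-translation surface homeomorphic to $S$ in which $\alpha$ is the core multicurve of a horizontal cylinder decomposition, $\beta$ of a vertical one, every cylinder has modulus $\lambda$, and $T_\alpha$, $T_\beta$ are affine automorphisms with linear parts $\begin{psmallmatrix}1&\lambda\\0&1\end{psmallmatrix}$ and $\begin{psmallmatrix}1&0\\-\lambda&1\end{psmallmatrix}$ respectively. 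By hypothesis (2) the cone angles of $\mathcal X$ are integer multiples of $\pi$, at most $\tfrac{N}{2}\pi$ (a complementary $2k$-gon collapses to a cone point of angle $k\pi$, a once-punctured one to a puncture of the same cone angle), so $\mathcal X$ has bounded geometry; by hypothesis (3) the marked point $p$ is a cone point of angle $m\pi$.

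\emph{From affine hyperbolicity to loxodromicity.} Writing $f$ as a positive word in $T_\alpha$ and $T_\beta^{-1}$ in which both letters occur, its linear part $A_f$ is the same word in $\begin{psmallmatrix}1&\lambda\\0&1\end{psmallmatrix}$ and $\begin{psmallmatrix}1&0\\\lambda&1\end{psmallmatrix}$; every such product has strictly positive entries, hence trace $>2$ (if $ad-bc=1$ with $a,b,c,d>0$ then $a+d\ge 2\sqrt{ad}>2$), so $A_f$ is hyperbolic for all $\lambda>0$ — this is exactly why the semigroup is taken to be generated by $T_\alpha$ and $T_\beta^{-1}$ rather than $T_\alpha$ and $T_\beta$. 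Thus $f$ is an affine automorphism of $\mathcal X$ with hyperbolic linear part, and the directional foliations $\mathcal F_{\theta^+}$, $\mathcal F_{\theta^-}$ in the two eigendirections of $A_f$ are its unstable/stable foliations. The heart of the proof is to show that $\mathcal F_{\theta^+}$ and $\mathcal F_{\theta^-}$ are minimal with no saddle connections, so that the leaves issuing from $p$ define two distinct ``long rays'' $\eta^\pm$ representing points of the Gromov boundary $\partial L(S;p)$, and that for every loop $\delta$ based at $p$ the $\mathcal X$-geodesic representatives of $f^{\,n}\delta$ (resp. $f^{-n}\delta$) converge to $\eta^+$ (resp. $\eta^-$) uniformly enough to apply Bavard and Walker's loxodromicity criterion for the action on the loop graph from \cite{BaWa18A,BaWa18B}. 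This transfer — from affine north-south dynamics to a genuine quasi-geodesic axis in the \emph{non-proper} hyperbolic graph $L(S;p)$ — is where the bounded geometry of $\mathcal X$ (hypotheses (1)--(2)) and the combinatorics of high-filling rays are essential, and it is the step I expect to be the main obstacle.

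\emph{Weight, invariant subsurfaces, and infinitude.} Granting loxodromicity with axis limiting to $\eta^\pm$, the weight of $f$ is computed locally at $p$: the separatrices of $\mathcal F_{\theta^+}$ emanating from $p$ are exactly $m$ in number, since $p$ has cone angle $m\pi$, and matching this with the definition of weight in \cite{BaWa18B} gives weight $m$. Because $\alpha\cup\beta$ fills $S$, the foliation $\mathcal F_{\theta^+}$ is minimal and fills the whole infinite-type surface; being canonically attached to $f$, it can neither be carried by nor be disjoint from any finite-type subsurface $S'\subsetneq S$, so no such $S'$ is $f$-invariant up to isotopy. Finally, letting the word in $T_\alpha$ and $T_\beta^{-1}$ vary yields infinitely many pairwise non-conjugate elements of this type (e.g. with distinct stable translation lengths), all of weight $m$, which gives the theorem.
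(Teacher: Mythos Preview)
Your overall architecture matches the paper's: build the Hooper--Thurston--Veech flat structure $M$, realize $T_\alpha,T_\beta$ as affine automorphisms, observe that any positive word in $T_\alpha,T_\beta^{-1}$ involving both has hyperbolic derivative, and use the invariant foliations $\mathcal F^u,\mathcal F^s$ to produce the two fixed boundary points. However, the step you flag as ``the main obstacle'' is genuinely the bulk of the proof, you do not carry it out, and your proposed route through it differs from the paper's and has real gaps.

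First, minimality/no-saddle-connections for $\mathcal F^{u},\mathcal F^{s}$ does not follow from bounded geometry alone. What the paper proves is that every separatrix based at a point of the singular set $\mathfrak V$ is dense in $M$. The argument is substantial: one first checks that the eigendirections of $Df$ are $\lambda$-renormalizable, then invokes Hooper's theorems (no saddle connections and conservativity of the straight-line flow in renormalizable directions, after passing to the orientation double cover when $M$ is only a half-translation surface) to conclude that the union of \emph{all} separatrices is dense; finally a careful local argument, using the positivity of the entries of $Df$ to force intersections between unstable and stable separatrices inside each cylinder, shows that the $\omega$-limit set of a \emph{single} separatrix already contains every other separatrix. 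None of this is indicated in your proposal.

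Second, and more seriously, your ``transfer'' step is not the paper's. The completed ray graph and its boundary are defined via a hyperbolic metric $\mu$ of the first kind on $S$, which is not a priori quasi-isometric to the flat metric; in particular, speaking of ``$\mathcal X$-geodesic representatives of $f^n\delta$'' converging to a limiting ray does not obviously say anything about $\partial L(S;p)$. The paper instead performs a straightening procedure \`a la Levitt: lift a separatrix $\gamma_i$ to the universal cover $\mathbb D$, use its density together with transversality to a fixed closed curve in $\alpha\cup\beta$ to show the lift lands on a single point of $\partial\mathbb D$, and replace $\gamma_i$ by the $\mu$-geodesic $\delta_i$ with the same endpoints. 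Loxodromicity is then obtained not by a convergence/criterion argument but by showing directly that $\{\delta_1,\dots,\delta_m\}$ is an $f$-invariant clique of high-filling rays: for \emph{any} other simple $\mu$-geodesic $\delta$ through $p$, a further argument in $\mathbb D$ (again driven by density of separatrices) produces a lift of each $\delta_i$ that crosses the chosen lift of $\delta$.

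One smaller confusion: you write that the leaves through $p$ define ``two distinct long rays $\eta^\pm$''. In fact $p$ has cone angle $m\pi$, so each of $\mathcal F^u,\mathcal F^s$ has $m$ separatrices at $p$; after straightening these give two $f$-invariant cliques of cardinality $m$, and that cardinality is, by definition, the weight. (Also, the cylinders have modulus $1/\lambda$, not $\lambda$.)
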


\begin{remark}
    During the 2019 AIM-Workshop on Surfaces of Infinite Type we learned that Abbott, Miller and Patel have also a construction of loxodromic mapping classes whose support is an infinite type surface~\cite{AbMiPa}. Their examples are obtained via a composition of handle shifts and live in the complement of the closure of the subgroup of $\MCG(S,p)$ defined by homeomorphisms with compact support. In contrast, loxodromic elements given by Theorem~\ref{THM:Loxodromics} are limits of compactly supported mapping classes.
\end{remark}

As we explain in Section~\ref{Subsection:LoopGraph}, the weight of a loxodromic element $f\in\MCG(S;p)$ is defined by Bavard and Walker using a precise description of the (Gromov) boundary $\partial L(S;p)$ of the loop graph. For finite-type surfaces, if $f$ is a pseudo-Anosov having a singularity at $p$, this number corresponds to the number of separatrices based at $p$ of an $f$-invariant transverse measured foliation. This quantity is relevant because, as shown in~\cite{BaWa18B} and using the language of Bestvina and Fujiwara~\cite{BestvinaFujiwara02}, loxodromic elements with different weights are independent and anti-aligned. This has several consequences, for example the work of Bavard-Walker~\cite{BaWa18A} gives for free the following:

\begin{corollary}
Let $f,g$ be two loxodromic elements in $\MCG(S;p)$ as in Theorem~\ref{THM:Loxodromics} and suppose that their weights are $m\neq m'$. Then any subgroup of $\MCG(S;p)$ containing them has an infinite-dimensional space of non-trivial quasimorphisms.
\end{corollary}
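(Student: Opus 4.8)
\emph{Proof proposal.} The plan is to deduce the corollary formally from Theorem~\ref{THM:Loxodromics} together with the two results of Bavard and Walker recalled just above it; no new geometry is needed, since all the substantive work has gone into Theorem~\ref{THM:Loxodromics}. First I would fix a subgroup $H\le\MCG(S;p)$ containing the two given elements $f$ and $g$ and restrict the isometric action of $\MCG(S;p)$ on the Gromov-hyperbolic, infinite-diameter loop graph $L(S;p)$ to $H$. By Theorem~\ref{THM:Loxodromics} (this is what ``as in Theorem~\ref{THM:Loxodromics}'' buys us) the elements $f$ and $g$ are loxodromic for this action, of weights $m$ and $m'$ respectively, and by hypothesis $m\neq m'$.

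Next I would record, using the description of $\partial L(S;p)$ and of the weight function from Section~\ref{Subsection:LoopGraph} (following \cite{BaWa18B}), that weight is a conjugacy invariant and, exactly as for pseudo-Anosov maps on finite-type surfaces, is unchanged under passing to nonzero powers, since $f$ and $f^k$ determine the same attracting point of $\partial L(S;p)$. In particular $f$ and $g$ are not powers of a common element, and the same remains true after replacing $f,g$ by $f^k,g^k$ should that be convenient later; these replacements keep the elements inside $H$ and keep the weights distinct.

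The central step is to apply the result of \cite{BaWa18B} asserting that loxodromic elements of distinct weight are independent and anti-aligned in the sense of Bestvina and Fujiwara \cite{BestvinaFujiwara02}: since $m\neq m'$, the pair $f,g$ is independent and anti-aligned. I would then feed this into the Brooks-type construction of \cite{BaWa18A}: a group acting by isometries on a Gromov-hyperbolic space and containing two independent, anti-aligned loxodromic elements carries an infinite-dimensional space of non-trivial homogeneous quasimorphisms, obtained by counting copies of long oriented subpaths of the two quasi-axes along orbits. Applying this to the action of $H$ on $L(S;p)$ with the pair $f,g$ gives the conclusion. Here one uses crucially that the Bestvina--Fujiwara--type argument in \cite{BaWa18A} is designed to work without any weak proper discontinuity hypothesis, which fails badly for big mapping class groups; the content is precisely that ``independent and anti-aligned'' already suffices.

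I do not expect a serious obstacle: the one point I would write out carefully is the bookkeeping that matches the output of \cite{BaWa18B} to the hypotheses of the quasimorphism-construction theorem of \cite{BaWa18A} --- that ``independent and anti-aligned'' is used there with the same meaning, and that no further primitivity or genericity assumption on $f,g$ is required beyond what distinctness of weights already guarantees. Because weight is invariant under taking powers, any such assumption (for instance replacing $f,g$ by suitable powers in order to straighten their quasi-axes) can be arranged without leaving $H$ and without destroying $m\neq m'$, so it costs nothing. With that verification in place the corollary is immediate.
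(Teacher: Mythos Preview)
Your proposal is correct and follows exactly the route the paper intends: the paper does not give a separate proof of this corollary but simply states that it follows ``for free'' from \cite{BaWa18A}, via Lemma~\ref{Remark:neqWeightEqIndependence} (different weights $\Rightarrow$ independent and anti-aligned) together with Theorem~\ref{THM:Loxodromics}. Your write-up is a faithful unpacking of that one-line deduction, with the extra (harmless) care about invariance of weight under powers.
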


Applications for random-walks on the loop graph with respect to probability distributions supported on countable subgroups of $\MCG(S;p)$ can be easily deduced from recent work of Maher-Tiozzo~\cite{MaherTiozzo18}.

On the other hand, recent work by Rasmussen~\cite{Rass19} implies that the mapping classes given by Theorem~\ref{THM:Loxodromics} are not WWPD in the language of Bestvina-Bromberg-Fujiwara~\cite{BeBroFu15}.

\emph{About the proof of Theorem~\ref{THM:Loxodromics}}. As in the case of Thurston's work, our proof relies on the existence of a flat surface structure $M$ on $S$, having a conical singularity at $p$, for which the Dehn-twists $T_\alpha$ and $T_\beta$ are affine automorphisms. In the case of finite-type surfaces, this structure is unique (up to scaling) and its existence is guaranteed by the Perron-Frobenius theorem. For infinite-type surfaces the presence of such a flat surface structure is guaranteed once one can find a positive eigenfunction of the adjacency operator on the (infinite bipartite) configuration graph $\mathcal{G}(\alpha\cup\beta)$. Luckly, the spectral theory of infinite graphs in this context secures the existence of uncountably many flat surface structures (which are not related by scaling) on which the Dehn-twists $T_\alpha$ and $T_\beta$ are affine automorphisms. The main difficulty we encounter is that the description of the Gromov boundary $\partial L(S;p)$ needed to certify that $f$ is a loxodromic element depends on a hyperbolic structure on $S$ which, a priori, is not quasi-isometric to any of the aforementioned flat surface structures. To overcome this difficulty we propose arguments which are mostly of topological nature.

We strongly believe that Theorem~\ref{THM:Loxodromics} does not describe all possible loxodromics living in the group generated by $T_\alpha$ and $T_\beta$.

\begin{question}
Let $\alpha$ and $\beta$ be as in Theorem~\ref{THM:Loxodromics}. Is every element $f$ in the group generated by $T_\alpha$ and $T_\beta$ given by a word on which both generators appear loxodromic? In particular, is $T_\alpha T_\beta$ loxodromic?
\end{question}

 We spend a considerable part of this text in the proof of the next result, which guarantees that Theorem~\ref{THM:Loxodromics} is not vacuous.
\begin{theorem}
	\label{THM:Multicurves}
Let $S$ be an infinite-type surface, $p\in S$ a marked point and $m\in\mathbb{N}$. Then there exist two multicurves $\alpha$ and $\beta$ whose union fills $S$ and such that:
\begin{enumerate}
\item the configuration graph $\mathcal{G}(\alpha\cup\beta)$ is of finite valence,
\item every connected component $D_i$ of $S\setminus\alpha\cup\beta$  which does not contain the pont $p$ is a polygon or a once-punctured polygon with $n_i$ sides, where $n_i\leq N:=\max\{8,m\}$, and
\item $p$ is  contained in a connected component of $D_j$ of $S\setminus\alpha\cup\beta$ whose boundary $\partial D_j$ is a $2m$-polygon.
\end{enumerate}
\end{theorem}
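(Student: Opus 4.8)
The goal is an explicit construction of the pair $(\alpha,\beta)$, done in two stages: first build a model "tiling" on a planar or one-ended piece, then glue copies together to realize an arbitrary infinite-type surface. The plan is to begin with the well-known square-tiled picture on the plane (or the torus), where $\alpha$ is the family of horizontal core curves and $\beta$ the family of vertical core curves of a grid of unit squares; there the complementary regions are $4$-gons, the configuration graph is the (infinite-valence... so we must be careful) grid graph. To get \emph{finite valence} we cannot use the full $\Z^2$ grid as-is if we want curves to be simple closed curves on a surface of infinite genus; instead I would use a "brick-wall" or "ladder" pattern in which each $\alpha_i$ and $\beta_j$ crosses only boundedly many curves of the other family. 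Concretely, take a bi-infinite chain of tori (or a tree of tori) glued along annuli, place on each torus the standard pair of curves meeting once, and interpolate with small "connector" curves across the gluing annuli; the complementary polygons stay of bounded size $n_i\le 8$ and the configuration graph has uniformly bounded degree.

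Next I would handle the marked point $p$ and the weight $m$. Here I would carve out one distinguished complementary region and force it to be a $2m$-gon containing $p$: take a regular $2m$-gon, and alternately label its sides as belonging to arcs of $\alpha$ and of $\beta$ (this requires $2m$ even, which it is), so that $m$ sides come from $\alpha$-curves and $m$ from $\beta$-curves. One then extends each of these $2m$ boundary arcs into globally simple closed curves in the ambient surface, using the flexibility of the construction in the previous paragraph to absorb them into the existing finite-valence tiling (e.g.\ route them through a "collar" of $4$-gons so nothing else changes). Since $p$ lies in the interior of this $2m$-gon, condition (3) holds; the other regions remain polygons or once-punctured polygons of bounded size, so condition (2) holds with $N=\max\{8,m\}$; and the degree bound is preserved, giving (1).

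Finally, the realization of an \emph{arbitrary} infinite-type surface $S$. I would invoke the classification of infinite-type surfaces by (genus, possibly infinite) and the space of ends (with its partition into ends accumulated by genus and the rest), and build $S$ as an appropriate connected sum / infinite gluing of the local models above: a "tree of pieces" whose branching and leaf structure realizes the end space, and whose pieces carry either a torus-type tile (to add genus) or a punctured-polygon tile (to add a puncture/end). The key point is that all gluings are along annuli whose core curves can be taken among the $\alpha$- or $\beta$-curves (or as new connector curves in bounded-size regions), so that after gluing, $\alpha\cup\beta$ still fills $S$, the configuration graph still has uniformly bounded valence, and the complementary regions are still polygons or once-punctured polygons of at most $N$ sides.

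The main obstacle I anticipate is \emph{simultaneously} controlling three competing requirements: that $\alpha\cup\beta$ \emph{fills} $S$ (no complementary region is an unpunctured polygon with "too few" sides that would make it an annulus or a disk pushing off to an end, i.e.\ the complement must have no non-disk, non-once-punctured-disk components — this is what "fills" means here), that the configuration graph have \emph{finite valence} (which forbids any curve from meeting infinitely many others, hence forces the local picture to be uniformly bounded in complexity), and that the region around $p$ be exactly a $2m$-gon. The tension is that making curves "long" enough to fill a surface of infinite type while keeping each one crossing only boundedly many others requires a careful, essentially combinatorial, bookkeeping of how the local tiles are patched along the tree of pieces — in particular ensuring the gluing annuli do not create unwanted bigons (so $\alpha$ and $\beta$ remain in minimal position) and do not create complementary regions with more than $N$ sides at the seams. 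I expect the bulk of the written proof to be this patching argument, organized by first treating the one-ended infinite-genus case, then the general case by a tree-of-ends decomposition.
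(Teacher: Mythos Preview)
Your high-level strategy---build local curve patterns on basic pieces, assemble them along a tree that realizes the end space of $S$, and treat the region around $p$ separately---is essentially the same two-stage approach the paper takes. So the plan is sound.

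Two differences are worth flagging. First, the paper's main technical device, which you have not identified, is a specific \emph{normal form} for $S$: one embeds $\Ends(S)$ into the ends of the rooted binary tree $T2^{\N}$, passes to the induced subtree, and then prunes it to a \emph{simple} subtree (every degree-$2$ vertex has only degree-$2$ descendants); genus is then added by a local surgery on the vertices lying over $\Ends_\infty(S)$. This simplicity condition is exactly what makes your anticipated ``bookkeeping at the seams'' tractable: it forces the non-genus part of the tree to decompose into uniformly shaped pieces (pairs of same-colour edges meeting at a trivalent vertex, isolated bridge edges, etc.), for each of which the paper prescribes an explicit blue/red curve via a picture. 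Without something like this you will have trouble ruling out unbounded complementary polygons or unbounded valence at the gluing annuli; your proposal acknowledges this obstacle but does not yet resolve it.

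Second, your treatment of the $2m$-gon around $p$ differs from the paper's. You propose to take a $2m$-gon with alternating $\alpha/\beta$ sides and then ``extend each of these $2m$ boundary arcs into globally simple closed curves'' routed through a collar. The paper instead first cuts off, via a finite multicurve $\delta$, a genus-zero subsurface $Q\cong S_0^{m+3}$ containing $p$, places inside $Q$ a \emph{chain} of $m+1$ curves (alternately coloured) so that the component of $Q\setminus(\alpha'\cup\beta')$ containing $p$ is a $2m$-gon, and only then extends $\alpha',\beta'$ across each complementary component $F$ of $S\setminus\alpha'$ using the Part-1 construction on $F$. This avoids having to thread $2m$ separate arcs through the ambient tiling and keeps the side-count bound $N=\max\{8,m\}$ automatic. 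Your route may well work, but the chain-in-a-planar-piece trick is cleaner and is what the paper actually does.
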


A crucial part on the proof of this result is to find, for \emph{any} infinite-type surface $S$, a simple way to portrait $S$. We call this a topological normal form. Once this is achieved, we give a recipe to construct the curves $\alpha$ and $\beta$ explicitly.

On the other hand, we find phenomena proper to big mapping class groups:

\begin{corollary}
    \label{Corollary:LoxodromicsConverging2Elliptic}
Let $S$ be the Loch-Ness monster\footnote{$S$ has infinite genus and only one end.} and consider the action of $\MCG(S;p)$ on the loop graph. Then there exist
a sequence of loxodromic elements $(f_n)$ in $\MCG(S;p)$ which converge in the compact-open topology to a non-trivial elliptic element.
\end{corollary}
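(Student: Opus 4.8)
The plan is to realize the sequence $(f_n)$ directly through the Hooper-Thurston-Veech construction, exploiting the fact that the Loch-Ness monster has a single end into which one can ``funnel'' the filling data. For each $n\in\N$ I would run the proof of Theorem~\ref{THM:Multicurves} (for a fixed weight, say $m=1$) to obtain a filling pair of multicurves $(\alpha_n,\beta_n)$ on $S$ satisfying the three hypotheses of Theorem~\ref{THM:Loxodromics}, but with the extra feature that the two sequences $(\alpha_n)_n$ and $(\beta_n)_n$ stabilise on every compact subsurface: the configuration near $p$ is kept fixed, while the curves responsible for the finite valence of $\mathcal{G}(\alpha_n\cup\beta_n)$ and for the bounded number of sides of the complementary polygons are pushed further and further out towards the unique end of $S$ as $n$ grows. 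Then $\alpha_n\to\alpha_\infty$ and $\beta_n\to\beta_\infty$ as multicurves, hence $T_{\alpha_n}\to T_{\alpha_\infty}$, $T_{\beta_n}\to T_{\beta_\infty}$ in the compact-open topology, so $f_n:=T_{\alpha_n}T_{\beta_n}^{-1}\longrightarrow g:=T_{\alpha_\infty}T_{\beta_\infty}^{-1}$; and by Theorem~\ref{THM:Loxodromics} every $f_n$ is loxodromic.

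There remain two things to check about $g$. First, $g\neq\mathrm{id}$: this is arranged by keeping the configuration near $p$ non-degenerate (e.g.\ the complementary component of $p$ stays a $2$-gon), so that $\alpha_\infty$ and $\beta_\infty$ still cross essentially and $g$ moves loops running through that component. Note also that $g$ can fix no loop of $L(S;p)$: otherwise some $f_n$ would fix the same loop, by compact-open convergence, and hence fail to be loxodromic. Second --- and this is the substantial point --- $g$ must be \emph{elliptic}, i.e.\ have a bounded orbit. The mechanism I would use is to arrange that $\alpha_\infty\cup\beta_\infty$ fills only a \emph{proper} subsurface $Q\subsetneq S$ with $p\in Q$, the complement $S\setminus Q$ containing a region that accumulates the end of $S$ and is therefore not a disk; then $g$ is supported on $Q$, and one checks, via the analysis underlying Theorem~\ref{THM:Loxodromics} now applied to $Q$ --- where the configuration graph of $(\alpha_\infty,\beta_\infty)|_Q$ fails to have finite valence, so the associated flat structures degenerate and the relevant affine classes become elliptic rather than hyperbolic --- that $g|_Q$ has bounded orbit on $L(Q;p)$. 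Finally, an element supported on a proper subsurface $Q\ni p$ with bounded orbit on $L(Q;p)$ has bounded orbit on $L(S;p)$; this last step uses the description of $L(S;p)$ (and of $\partial L(S;p)$) recalled in Section~\ref{Subsection:LoopGraph}.

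The hard part is exactly this second check. In contrast with the finite-type case --- where $\MCG$ carries the discrete topology and a convergent sequence of pseudo-Anosovs is eventually constant, so the phenomenon cannot occur --- here the limit $g$ could \emph{a priori} be the identity, parabolic, or again loxodromic, and ruling out all three at once demands a delicate tuning of the degenerating family $(\alpha_n,\beta_n)$: one must freeze enough of the picture around $p$ that $g\neq\mathrm{id}$ (and, automatically, that $g$ has no fixed loop), yet let precisely enough of the ``loxodromic strength'' escape to the end of $S$ --- quantitatively, the underlying flat structures degenerate and the relevant dilatations tend to $1$ --- that the limiting isometry acquires a bounded orbit rather than merely zero translation length. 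Carrying this out, and in particular separating the elliptic case from the parabolic one, is where both the topology of the Loch-Ness monster (a single end, into which the filling pair can be pushed) and the $\partial L(S;p)$-machinery of Bavard-Walker enter in an essential way.
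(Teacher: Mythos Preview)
Your proposal contains a genuine gap in the key step: the argument for ellipticity of the limit $g$. You write that the configuration graph of the limiting pair ``fails to have finite valence, so the associated flat structures degenerate and the relevant affine classes become elliptic rather than hyperbolic,'' and later that ``the underlying flat structures degenerate and the relevant dilatations tend to $1$.'' None of this is a mechanism. If the configuration graph has infinite valence then Theorem~\ref{THM:HooperThurstonConstruction} simply does not apply and there is no flat structure to discuss; and dilatations tending to $1$ says nothing about whether the limiting isometry of $L(S;p)$ is elliptic, parabolic, or loxodromic with small translation length. You also assert that ``$g$ can fix no loop of $L(S;p)$: otherwise some $f_n$ would fix the same loop, by compact-open convergence''---this implication is false (loops based at $p$ are not compact, and in any case fixing an \emph{isotopy class} is not detected this way), and more importantly the claim works against you: fixing a vertex is the easiest route to ellipticity, not an obstacle.

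The paper's proof is far simpler and avoids all of this. One fixes $\alpha$ once and for all and writes $\beta=\beta'\sqcup\{a_i\}_{i\in\N}\sqcup\{b_i\}_{i\in\N}$, then sets $\beta_n:=\beta'\sqcup\{a_i\}_{i\ge n}\sqcup\{b_i\}_{i\ge n}$, so that $\alpha\cup\beta_n$ still fills $S$ (and satisfies the hypotheses of Theorem~\ref{THM:Loxodromics}) for every $n$, while in the limit $\alpha\cup\beta'$ does \emph{not} fill. The point is that one can arrange the picture so that there are explicit short rays $l,l'$ from $p$ to the unique end of $S$ which are disjoint from $\alpha\cup\beta'$; hence $T_\alpha$ and $T_{\beta'}^{-1}$ both fix $l$ and $l'$, and so does the limit $f=T_\alpha T_{\beta'}^{-1}$. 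Since short rays are vertices of the main component of the completed ray graph $\mathcal{R}(S;p)$, which by Theorem~\ref{THM:CompletedRayGraph} is quasi-isometric to $L(S;p)$, the element $f$ has bounded orbit on $L(S;p)$ and is therefore elliptic. Non-triviality of $f$ is immediate because $\alpha$ and $\beta'$ still intersect. The moral is: do not try to control the dynamics of the limit via degenerating flat structures; instead, arrange the limiting multicurves so that the limit \emph{fixes a short ray}.
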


\begin{theorem}
    \label{COR:FlatAmbivalentHomeos}
There exits a family of translation surface structures $\{M_\lambda\}_{\lambda\in[2,+\infty]}$ on a  Loch Ness monster $S$ and $f\in\MCG(S)$ such that:
\begin{itemize}
\item For every $k\in\N$, $f^k$ does not fix any isotopy class of essential simple closed curve in $S$.
\item If $\tau_\lambda:\Aff(M_\lambda)\hookrightarrow\MCG(S)$ is the natural map  sending an affine homeomorphism to is mapping class, then $D\tau_\lambda^{-1}(f)\in\PSL(2,\R)$ is parabolic if $\lambda=2$ and hyperbolic for every $\lambda>2$.
\end{itemize}
\end{theorem}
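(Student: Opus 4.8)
The plan is to prove Theorem~\ref{COR:FlatAmbivalentHomeos} by producing $f$ from a Hooper--Thurston--Veech pair $(\alpha,\beta)$ whose configuration graph is an infinite path: the Thurston--Veech matrices will then pin $Df$ down to have trace $2-\lambda^{2}$, and the range $\lambda\in[2,+\infty]$ is forced by the spectral theory of the path. Concretely, I would realize the Loch Ness monster $S$ as an increasing union of genus-$k$ subsurfaces with one boundary component, and take a linear chain $\gamma_{0},\gamma_{1},\gamma_{2},\dots$ of simple closed curves subordinate to this exhaustion, so that $i(\gamma_{i},\gamma_{i+1})=1$, $i(\gamma_{i},\gamma_{j})=0$ for $|i-j|\ge2$, and the $\gamma_{i}$ accumulate onto the unique end of $S$. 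Put $\alpha:=\{\gamma_{i}:i\text{ even}\}$ and $\beta:=\{\gamma_{i}:i\text{ odd}\}$; these are multicurves in minimal position whose union fills $S$, every complementary region is a quadrilateral (with possibly one exceptional bounded polygon near $\gamma_{0}$), and $\cG(\alpha\cup\beta)$ is the half-line, hence of valence $\le2$ --- so the hypotheses under which the Hooper--Thurston--Veech construction of the preceding sections applies are met, and the configuration is of the kind produced in the proof of Theorem~\ref{THM:Multicurves}. By the Perron--Frobenius theory of infinite graphs, the adjacency operator of the half-line admits a positive eigenfunction $\phi_{\mu}$ with $A\phi_{\mu}=\mu\phi_{\mu}$ \emph{exactly} for $\mu\ge2$; writing $\mu=r+r^{-1}$ with $r\ge1$ one may take $\phi_{\mu}(n)=r^{\,n+1}-r^{-(n+1)}$ for $\mu>2$ and $\phi_{2}(n)=n+1$, which are increasing. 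Feeding $\phi_{\mu}$ into the construction yields a translation surface structure $M_{\mu}$ on $S$ --- complete, with no cusp, since $\phi_{\mu}$ increasing makes all cylinders grow towards the end --- in which the curves of $\alpha$ (resp.\ $\beta$) are cores of horizontal (resp.\ vertical) cylinders all of modulus $1/\mu$, and $T_{\alpha},T_{\beta}$ are affine with $DT_{\alpha}=\begin{psmallmatrix}1&\mu\\0&1\end{psmallmatrix}$, $DT_{\beta}=\begin{psmallmatrix}1&0\\-\mu&1\end{psmallmatrix}$ for the orientation conventions used there. I then set $M_{\lambda}:=M_{\mu}$ for $\lambda=\mu\in[2,+\infty)$, take $M_{\infty}$ to be the geometric limit of the $M_{\lambda}$ as $\lambda\to+\infty$, and define $f:=T_{\alpha}T_{\beta}\in\MCG(S)$.

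With this set-up the second bullet is immediate. On $M_{\lambda}$,
\[
Df=\begin{pmatrix}1&\lambda\\0&1\end{pmatrix}\begin{pmatrix}1&0\\-\lambda&1\end{pmatrix}=\begin{pmatrix}1-\lambda^{2}&\lambda\\-\lambda&1\end{pmatrix}\in\SL(2,\R),
\]
which has determinant $1$, trace $2-\lambda^{2}$, and is not $-I$; hence in $\PSL(2,\R)$ it is parabolic precisely when $\lambda=2$ (trace $-2$) and hyperbolic for every $\lambda>2$ (trace of absolute value $>2$), and likewise at $\lambda=\infty$ by continuity. Since $S$ is not a torus, $\tau_{\lambda}$ is injective; and $f$, being the composite of the affine multitwists, lies in its image, so $D\tau_{\lambda}^{-1}(f)=Df$.

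For the first bullet I would observe that ``$f^{k}$ fixes no isotopy class of essential simple closed curve'' is a statement about the mapping class $f$ alone, so it may be verified on any single $M_{\lambda}$; fix $\lambda>2$. There $Df$ is hyperbolic, with eigenvalues $-r^{-2}$ and $-r^{2}$ (writing $\lambda=r+r^{-1}$, $r>1$) and eigendirections of slopes $r$ and $r^{-1}$. Let $\mathcal{F}_{+}$ and $\mathcal{F}_{-}$ be the measured foliations of $M_{\lambda}$ by straight lines of slopes $r^{-1}$ and $r$ respectively; being affine with the above derivative, $f(\mathcal{F}_{\pm})=r^{\mp2}\mathcal{F}_{\pm}$, whence $f^{-k}\mathcal{F}_{+}=r^{2k}\mathcal{F}_{+}$ and $f^{-k}\mathcal{F}_{-}=r^{-2k}\mathcal{F}_{-}$. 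If some $f^{k}$ fixed the isotopy class of an essential simple closed curve $c$, then by isotopy invariance and naturality of the geometric intersection number $i(c,\mathcal{F}_{+})=i(f^{k}c,\mathcal{F}_{+})=i(c,f^{-k}\mathcal{F}_{+})=r^{2k}\,i(c,\mathcal{F}_{+})$, forcing $i(c,\mathcal{F}_{+})=0$, and likewise $i(c,\mathcal{F}_{-})=0$. But $M_{\lambda}$ is a complete translation surface, hence non-positively curved with $\mathrm{CAT}(0)$ universal cover, so $c$ has a flat geodesic representative $\gamma$, and $i(c,\mathcal{F}_{\theta})$ equals the total transverse measure of $\gamma$ against $\mathcal{F}_{\theta}$; thus $i(c,\mathcal{F}_{+})=0$ forces every segment of $\gamma$ to have slope $r^{-1}$, while $i(c,\mathcal{F}_{-})=0$ forces every segment to have slope $r$, a contradiction since $r\neq r^{-1}$. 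Hence no power of $f$ fixes an essential class. (In particular $f$ is irreducible even though $Df$ is parabolic on $M_{2}$ --- the ``ambivalence'' of the statement; in the finite-type setting an affine parabolic is always reducible.)

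The main obstacle lies in the first step: checking carefully that the chain $\{\gamma_{i}\}$ fills a genuine Loch Ness monster with uniformly bounded polygonal complement and finite-valence configuration graph, that the positive spectrum of $\cG(\alpha\cup\beta)$ is exactly $[2,+\infty)$, and --- essential for the last argument --- that each $M_{\lambda}$ is complete with no cusp (so no deck transformation is ``parabolic'' in the $\mathrm{CAT}(0)$ sense and flat geodesic representatives exist). Giving a precise meaning to the endpoint $M_{\infty}$ and justifying the flat-strip/$\mathrm{CAT}(0)$ input on an infinite-type translation surface are the remaining technical points, but both become routine once completeness of the $M_{\lambda}$ is established.
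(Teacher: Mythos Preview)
Your proposal is essentially correct and runs parallel to the paper's argument, but with two different choices and one genuine slip.

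First, the paper takes $\cG(\alpha\cup\beta)$ to be the \emph{bi-infinite} path indexed by $\Z$ (the infinite-staircase multicurves of Figure~\ref{Fig:MulticurvesInfiniteStaircase}), with harmonic functions $\textbf{h}_2\equiv 1$ and $\textbf{h}_\lambda(n)=r_+^{\,n}$ for $r_+=\tfrac12(\lambda+\sqrt{\lambda^2-4})$; either your half-line or the paper's line has positive harmonic eigenfunctions exactly for $\lambda\in[2,\infty)$, so both choices work. Your claim that the complement of the chain consists of quadrilaterals, however, is not right: an Euler-characteristic count on the genus-$k$ truncations $\gamma_0,\dots,\gamma_{2k-1}$ shows the single complementary face is an $(8k-4)$-gon, so in the limit the complement contains an infinite polygon (the paper's bi-infinite example has four of them). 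Hence your pair does \emph{not} satisfy hypothesis~(2) of Theorem~\ref{THM:HooperThurstonConstruction}; you must, like the paper, appeal to the more general construction in the remark following its proof. Also note that the family is really indexed by $[2,\infty)$; the paper constructs no $M_\infty$, and your geometric-limit gesture is neither needed nor made precise.

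Second, for the first bullet the paper argues on $M_2$ rather than on a hyperbolic $M_\lambda$. On the infinite staircase the eigendirection of the parabolic $Df$ has no cylinders: the slope-$1$ foliation decomposes $M_2$ into two infinite strips, and any curve class fixed by a power of an affine parabolic would have to produce a closed leaf in that eigendirection (one sees this cleanly by descending to the finite-type $\Z$-quotient of the staircase and invoking the Nielsen--Thurston classification there). This bypasses the completeness/CAT(0) input your route requires. Your argument via $i(c,\mathcal{F}_\pm)=0$ is a legitimate alternative, but be aware that the two vanishing conditions arise from minimizing over \emph{different} representatives of $c$; combining them really does need a simultaneous (geodesic) minimizer, so completeness of $M_\lambda$ is the crux of your approach, not a cosmetic afterthought.
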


Recall that for finite-type surfaces $S$ a class $f\in\MCG(S)$ such that for every $k\in\N$, $f^k$ does not fix any isotopy class of essential simple closed curve in $S$ is necessarily pseudo-Anosov. In particular, the derivative of any affine representative $\phi\in f$ is hyperbolic.

We want to stress that for many infinite-type surfaces $\MCG(S)$ does not admit an action on a metric space with unbounded orbits. For a more detailed discussion on this fact and the large scale geometry of big mapping class groups we refer the reader to~\cite{DurhamFanoniVlamis18}, ~\cite{MahnRafi20} and references therein.

\emph{Outline}. Section~\ref{Section:Preliminaries} is devoted to preliminaries about the loop graph, its boundary and infinite-type flat surfaces. In Section~\ref{Subsection:HTVConstruction} we present  the Hooper-Thurston-Veech construction\footnote{More precisely, the construction here presented is a particular case of a more general construction built upon work of P. Hooper by the second author and  V. Delecroix. The first version of this more general construction had mistakes that were pointed out by the first author.}. In Section~\ref{Subsection:HTVConstruction} we also proof Theorem~\ref{COR:FlatAmbivalentHomeos}. Finally, Section~\ref{Section:ProofResults} is devoted to the proof of Theorems~\ref{THM:Loxodromics},~\ref{THM:Multicurves} and Corollary~\ref{Corollary:LoxodromicsConverging2Elliptic} (in this order).

\emph{Acknowledgements}. We are greatful to Vincent Delecroix, Alexander Rasmussen and Patrick Hooper for providing crucial remarks  that lead to the proof of Theorem~\ref{THM:Loxodromics}. We also want to thank Vincent Delecroix for letting us include a particular case of the Hooper-Thurston-Veech's construction and Figures~\ref{fig:DualGraph},~\ref{fig:MakingRectanglesEuclidean} and~\ref{Fig:MulticurvesInfiniteStaircase}, which are work in collaboration with the second author (see~\cite{DHV19}). We are greatful to Juliette Bavard and Alden Walker for taking the time to explain the details of their work, and to Chris Leininger for answering all our questions. We want to thank the following institutions and grants for their hospitality and financial support:
Max-Planck Institut f\"{u}r Mathematik, American Institut of Mathematics, PAPIIT IN102018, UNAM, PASDA de la DGAPA, CONACYT and Fondo CONACYT-Ciencia B\'asica's project 283960.






\section{Preliminaries}
    \label{Section:Preliminaries}

\subsection{Infinite-type surfaces} Any orientable (topological) surface $S$ with empty boundary is determiner up to homeomorphism by its genus (possibly infinite) and a pair of nested, totally disconnected, separable, metrizable topological spaces $\Ends_\infty(S)\subset\Ends (S)$ called the space of ends accumulated by genus and the space of ends of $S$, respectively.  Any such pair of nested topological spaces occurs as the space of ends of some orientable surface, see~\cite{Richards63}. On the other hand, $S\cup\Ends(S)$ can be endowed with a natural topology which makes the correspoding space compact. This space is called the Freudenthal end-point compactification of $S$, see~\cite{Raymond60}.

A surface $S$ is of finite (topological) type if its fundamental group is finitely generated. In any other case we say that $S$ is an \emph{infinite-type} surface. All surfaces considered henceforth are orientable.

\subsection{Multicurves} Let $S$ be an infinite-type surface. A collection of essential curves $l$ in $S$ is {{\it locally finite}} if for every $x\in S$ there exist a neighborhood $U$ of $x$ which intersects a finitely many elements of $l$. As surfaces are second-countable topological spaces, any locally finite collection of essential curves is countable.


A \emph{multicurve} in $S$ is a locally finite, pairwise disjoint, and pairwise non-isotopic collection of essential curves in $S$.

Let $\alpha$ be a multicurve in $S$. We say that $\alpha$ \emph{bounds a subsurface} $\Sigma$ of $S$, if the elements of $\alpha$ are exactly all the boundary curves of the closure of $\Sigma$ in $S$. Also, we say that $\Sigma$ is \emph{induced} by $\alpha$ if there exists a subset $\alpha' \subset \alpha$ that bounds $\Sigma$ and there are no elements of $\alpha \smallsetminus \alpha'$ in its interior.

A multicurve $\alpha$ in $S$ is of \emph{finite type} if every connected component of $S \smallsetminus \alpha$ is a finite-type surface.

Finite multicurves (that is, with formed by a finite number of curves) are not necessarily of finite type. On the other hand, there are infinite multicurves which are not of finite type, \emph{e.g.} the blue ("vertical") curves in the right-hand side of Figure \ref{MulticurvasEjemplos}.


Let $\alpha$ and $\beta$ be two multicurves in $S$. We say that $\alpha$ and $\beta$ are in \emph{minimal position} if for every $\alpha_i \in \alpha$ and $\beta_j \in \beta$, $\vert \alpha_i\cap \beta_j\vert$ realizes the minimal number of (geometric) intersection points between a representative in the free isotopy class of $\alpha_i$ and a representative in the homotopy class of $\beta_j$.  For every pair of multicurves one can find representatives in their isotopy classes which are in minimal position.

Let $\alpha$ and $\beta$ be two multicurves in $S$ in minimal position. We say that $\alpha$ and $\beta$ \emph{fill} $S$ if every connected component of $S\smallsetminus \left( \alpha \cup \beta \right)$ is either a disk or a once-punctured disk.

\begin{remark}
	\label{Remark:MulticurvesFiniteType}
Let $\alpha$ and $\beta$ be multicurves. Then:
\begin{enumerate}
\item If $\alpha$ and $\beta$ are of finite type and fill $S$, then every complementary connected component of $\alpha \cup \beta$ in $S$ is a polygon with finitely many sides. The converse is not true, see the left-hand side of Figure \ref{MulticurvasEjemplos}.
\item There are pair of multicurves $\alpha$ and $\beta$ so that $S\smallsetminus \left( \alpha \cup \beta \right)$ has a connected component that is a polygon with infinitely many sides, see the right-hand side of  Figure \ref{MulticurvasEjemplos}.
\end{enumerate}

\begin{figure}[!ht]
\begin{center}
	\includegraphics[width=.80\textwidth]{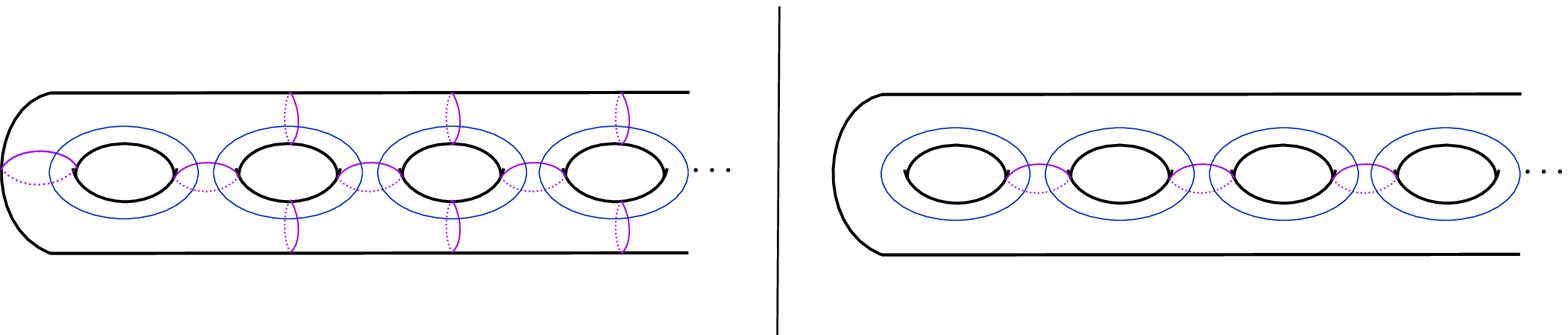}
	\caption{ }
	\label{MulticurvasEjemplos}
\end{center}
\end{figure}
\end{remark}

\subsection{The loop graph and its boundary}
\label{Subsection:LoopGraph}
 In~\cite{BaWa18A} and~\cite{BaWa18B}, Bavard and Walker introduced the loop graph $L(S;p)$ and prove that it is hyperbolic graph on which $\MCG(S,p)$ acts by isometries. Moreover, they made a precise description of the Gromov boundary of $L(S;p)$ in terms of (hyperbolic) geodesics on the Poincar\'e disk. We recall the general aspects of their work in what follows. The exposition follows largely ~\cite{BaWa18A}, \cite{BaWa18B} and~\cite{Rass19}.

\emph{The loop graph}. Let $S$ be an infinite type surface and $p\in S$. In what follows we think of $p$ as a
marked puncture in $S$. The isotopy class of a topological embedding of $\gamma:(0,1)\hookrightarrow S$ is said to be a \emph{loop} if it can be continuously extended to the end-point Freudenthal compactification $S\cup\Ends(S)$ of $S$ with $\gamma(0)=\gamma(1)=p$. On the other hand, if the continuous extension of $\gamma$ satifies that $\gamma(0)=p$ and $\gamma(1)\in\Ends(P)\setminus\{p\}$ we call it a short ray. The loop graph has as vertex set isotopy classes (relative to $p$) of loops and adjacency is defined by disjointness (modulo isotopy) and it is hyperbolic w.r.t to the combinatorial distance, see~\cite{BaWa18B}. In order to describe the Gromov boundary of $L(S;p)$ we need to introduce the completed ray graph.

\emph{Long rays and the completed ray graph}. From now on we fix a hyperbolic metric $\mu$ on $S$ of the first kind\footnote{That is, the Fuchsian group appearing in the uniformization $\mathbb{D}\to S$ has as limit set the whole boundary of the Poincar\'e disk.} for which the marked point $p$ is a cusp. Every short ray or loop has a unique geodesic representative in its isotopy class. We denote by $\pi:\hat{S}\to S$ the infinite cyclic cover of $S$ defined by the (conjugacy class of) cyclic subgroup of $\pi_1(S,\cdot)$ generated by a simple loop around the cusp $p$ and call it \emph{the conical cover of $S$}. The surface $\hat{S}$ is conformally equivalent to a punctured disc and its
unique cusp $\hat{p}$ projects to $p$. We denote by $\partial\hat{S}$ the Gromov boundary $\partial\hat{S}$ from which $\hat{p}$ has been removed. This cover is usefull because for every geodesic representative of a short ray or loop in $S$ there is a unique lift to $\hat{S}$ which is a geodesic with one end in $\hat{p}$ and the other in $\partial\hat{S}$.

A long ray on S is a \emph{simple} bi-infinite geodesic of the form $\pi(\hat{\delta})$, where $\hat{\delta}\subset\hat{S}$ is a geodesic from $\hat{p}$ to $\hat{S}$, which is not a short ray or a loop. By definition, each long ray limits to p at one end and does not limit to any point of $\Ends(S)$ on the other end. The vertices of the completed ray graph $\mathcal{R}(S;p)$ are isotopy classes of loops and short rays, and long rays. Two vertices are adjacent if their geodesic representatives in $(S,\mu)$ are disjoint. As before, we consider the combinatorial metric on $\mathcal{R}(S;p)$ defined by declaring that each edge has length 1.

\begin{theorem}~\cite{BaWa18B}
    \label{THM:CompletedRayGraph}
The completed ray graph $\mathcal{R}(S;p)$ is disconnected. There exist a component containg all loops and short rays, which is of infinite diameter and quasi-isometric to the loop graph. All other connected components are (possibly infinite) cliques and each of them is formed exclusively by long rays.
\end{theorem}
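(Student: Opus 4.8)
\emph{Proof proposal.} I would prove the three assertions in turn: that the loops and short rays span a single connected subgraph $\mathcal{C}_0$ quasi-isometric to $L(S;p)$; that some long ray lies outside $\mathcal{C}_0$, so $\mathcal{R}(S;p)$ is disconnected; and that every other component is a clique of long rays. The first step rests on the elementary observation that every short ray $r$ is adjacent in $\mathcal{R}(S;p)$ to a loop. Indeed, cutting $S$ along the $\mu$-geodesic representative of $r$ yields an infinite-type surface on whose boundary the cusp $p$ appears (twice); any essential arc joining the two copies of $p$ which is not isotopic to the seam glues up to a loop disjoint from $r$, and infinite type guarantees such an arc that is not itself a short ray. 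Granting the known facts that $L(S;p)$ is connected and has infinite diameter, it follows that the full subgraph $\mathcal{C}_0$ spanned by all loops and short rays is connected and of infinite diameter, and that the inclusion $L(S;p)\hookrightarrow\mathcal{C}_0$ is $1$-Lipschitz and $1$-dense. To upgrade this to a quasi-isometry I would show there are no ``shortcuts'': given a geodesic path in $\mathcal{R}(S;p)$ joining two loops, one replaces each short-ray (or long-ray) vertex by a loop at uniformly bounded loop-distance, using minimal position in $(S,\mu)$ to control the surgeries; this bounds $d_{L(S;p)}$ above in terms of $d_{\mathcal{R}(S;p)}$.

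For disconnectedness I would exhibit a long ray adjacent to no loop and no short ray. Fix a geodesic ray $(\gamma_n)_n$ in $L(S;p)$ and lift the $\mu$-geodesic representatives to the conical cover $\pi\colon\hat S\to S$, obtaining geodesics $\hat\gamma_n$ from $\hat p$ to points $\xi_n\in\partial\hat S$. Passing to a subsequence, $\xi_n\to\xi\in\partial\hat S$; let $\hat\delta$ be the geodesic from $\hat p$ to $\xi$ and put $\ell:=\pi(\hat\delta)$. Then $\ell$ is simple, and it is neither a short ray nor a loop, for otherwise $\xi$ would be a lift of a cusp or of an end of $S$ and the $\gamma_n$ could not leave every bounded set of $L(S;p)$; hence $\ell$ is a long ray. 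If some loop or short ray $c$ were disjoint from $\ell$, then since $\hat\gamma_n\to\hat\delta$ uniformly on compact subsets of $\hat S$ and the core of $c$ is compact, an appropriate lift of $c$ would be disjoint from $\hat\gamma_n$ for all large $n$; this bounds $d_{L(S;p)}(\gamma_n,c)$, contradicting that $(\gamma_n)$ is unbounded. So $\ell$ lies in a component of $\mathcal{R}(S;p)$ disjoint from $\mathcal{C}_0$.

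Finally, let $\mathcal{D}\neq\mathcal{C}_0$ be a component. By the first step every loop and every short ray is in $\mathcal{C}_0$, so $\mathcal{D}$ contains no such vertex, and in fact no vertex of $\mathcal{D}$ is adjacent to one (adjacency would pull that vertex into $\mathcal{D}=\mathcal{C}_0$); thus every vertex of $\mathcal{D}$ is a long ray crossing every loop and every short ray essentially. To see that $\mathcal{D}$ is complete I would argue that disjointness is transitive among such long rays, via their accumulation sets. For $\ell$ of this type, $\overline{\ell}\setminus\ell$ carries a minimal geodesic lamination $\Lambda_\ell$ which is again ``loop-filling''; if $\ell,\ell'$ are disjoint long rays of this type then each accumulates onto the other's minimal lamination, and minimality plus loop-filling force $\Lambda_\ell=\Lambda_{\ell'}$. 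Hence all rays in $\mathcal{D}$ share one loop-filling lamination $\Lambda$, and the complementary region of $\Lambda$ containing the cusp $p$ is a crown-type piece; the long rays in $\mathcal{D}$ are exactly the simple geodesic rays from $p$ running to the leaves bounding that region, and any two of them are pairwise disjoint because they lie in that single region and are separated by its spikes. This last point --- identifying the complementary region of the limiting lamination at $p$ and showing it carries only pairwise-disjoint long rays --- is the main obstacle; the rest is surgery in minimal position together with a compactness argument in the conical cover.
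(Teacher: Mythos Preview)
The paper does not contain a proof of this theorem: it is stated with the citation~\cite{BaWa18B} and used as a black box, with no argument given in the text. There is therefore nothing in the paper to compare your proposal against.

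That said, a brief remark on the proposal itself. Your outline broadly follows the strategy of Bavard--Walker, but two steps are underspecified. First, in the quasi-isometry step you assert that vertices along a $\mathcal{R}(S;p)$-geodesic between two loops can be replaced by loops at ``uniformly bounded loop-distance''; this is exactly the content of the \emph{unicorn-path} machinery in~\cite{BaWa18A,BaWa18B}, and it is not clear your surgery sketch yields a uniform bound without reproducing that argument. Second, and more seriously, in the clique step you claim that the closure $\overline{\ell}\setminus\ell$ of a high-filling long ray is a \emph{minimal} geodesic lamination, and that two disjoint such rays must share this lamination. Neither assertion is immediate: on an infinite-type surface the accumulation set of a geodesic ray need not be a lamination at all in the usual sense (compactness fails), and even granting lamination structure, minimality and the crown-region picture for the complementary component at $p$ require the careful cover-and-order arguments of~\cite{BaWa18B}. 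Your identification of this as ``the main obstacle'' is accurate, but the sketch does not indicate how to surmount it.
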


The component of $\mathcal{R}(S;p)$ containg all loops and short rays is called the \emph{main component} of the completed ray graph. Long rays not contained in the main component are called \emph{high-filling} rays and they each one of they form cliques formed exclusively of high-filling rays.

\emph{The Gromov boundary of the loop graph}. Let us denote by $\mathcal{H}(S,p)$ the set of all high-filling rays in $\mathcal{R}(S;p)$. Bavard and Walker endow $\mathcal{H}(S,p)$ with a topology. This topology is based on the notion of two rays $k$-beggining like each other, see Section 4.1 and Definition 5.2.4  in~\cite{BaWa18B}. On the other hand, they define a $\MCG(S;p)$-action on $\mathcal{H}(S;p)$ by homeomorphisms. We sketch this action in what follows. First they show that endpoints of lifts of loops and short rays to the conical cover $\hat{S}$ are dense in $\partial\hat{S}$. Using this, and the fact that mapping classes in $\MCG(S;p)$ permute loops and short rays, they prove that any $\phi\in\MCG(S;p)$ lifts to a homeomorphism of $\hat{S}$ which admits a unique continuous extension to a homeomorphism of $\partial\hat{S}$. Finally, they show that this extension preserves the subset of $\partial \hat{S}$ formed by endpoints of high-filling rays, hence inducing the aforementioned action by homeomorphisms.

\begin{theorem}
Let $\mathcal{E}(S;p)=\mathcal{H}(S;p)/\sim$, where $\sim$ identifies all high-filling rays in the same clique, and endow this set with the quotient topology. Then there exists a $\MCG(S;p)$-equivariant homeomorphism $F:\mathcal{E}(S;p)\to\partial L(S;p)$, where $\partial L(S;p)$ is the Gromov boundary of the loop graph.
\end{theorem}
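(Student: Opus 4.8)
The plan is to deduce the statement from Bavard and Walker's description of $\mathcal{R}(S;p)$ rather than to rebuild their combinatorial apparatus from scratch, and what follows is the shape of that deduction. By Theorem~\ref{THM:CompletedRayGraph} the main component of $\mathcal{R}(S;p)$ is quasi-isometric to $L(S;p)$, so $\partial L(S;p)$ is canonically identified with the Gromov boundary of that main component, and it is enough to produce an $\MCG(S;p)$-equivariant homeomorphism from $\mathcal{E}(S;p)$ onto it. The link between graph combinatorics and the geometry of $\partial\hat S$ is the dictionary recalled in Section~\ref{Subsection:LoopGraph}: a loop or short ray has a distinguished lift to the conical cover running from $\hat p$ to a single point of $\partial\hat S$, a long ray is by definition $\pi(\hat\delta)$ for such a lift $\hat\delta$, and (by a lemma of Bavard--Walker) adjacency in $\mathcal{R}(S;p)$ corresponds to disjointness of the distinguished lifts in $\hat S$, hence to an unlinking condition among their endpoints on the circle $\partial\hat S$.

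Using this dictionary I would first construct $F$. Represent a point of $\partial L(S;p)$ by a geodesic ray $(\ell_0,\ell_1,\dots)$ in the main component, lift each $\ell_n$, and record the endpoint $x_n\in\partial\hat S$. Successive disjointness forces the $x_n$, for large $n$, into a nested sequence of arcs of $\partial\hat S$ cut off by earlier lifts; the diameters of these arcs must shrink to zero, for otherwise one could produce a fixed loop or short ray disjoint from infinitely many $\ell_n$, contradicting that the ray leaves every bounded set. Hence $x_n\to x_\infty$ for some $x_\infty\in\partial\hat S$. Since endpoints of lifts of loops and short rays are dense in $\partial\hat S$ while the ray is unbounded, $x_\infty$ cannot itself be such an endpoint, so there is a long ray $\hat\delta$ from $\hat p$ to $x_\infty$; one checks $\pi(\hat\delta)$ is simple and, being a limit of a combinatorial geodesic, \emph{high-filling}. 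Declaring $F$ of the boundary point to be the clique of $\pi(\hat\delta)$ gives a candidate map, well defined because asymptotic geodesic rays have lifted endpoints converging to the same $x_\infty$.

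Next I would check $F$ is a bijection. For injectivity, distinct boundary points yield distinct $x_\infty$, and two high-filling rays share an endpoint on $\partial\hat S$ exactly when they are disjoint, i.e. lie in a common clique; this is precisely the relation $\sim$, so $F$ descends to and is injective on $\mathcal{E}(S;p)$. For surjectivity, given a high-filling ray $\rho$ with endpoint $x_\infty$, choose loops whose lifted endpoints tend to $x_\infty$ and are pairwise unlinked, then extract a genuine geodesic ray in $L(S;p)$ using hyperbolicity (a Morse/quasigeodesic argument); its image under $F$ is $[\rho]$. To upgrade this to a homeomorphism, one matches the Bavard--Walker topology on $\mathcal{H}(S;p)$ (two rays \emph{$k$-beginning like} each other) with proximity of endpoints in $\partial\hat S$, and the latter with the Gromov topology on $\partial L(S;p)$ through the dictionary; verifying continuity in both directions then gives the homeomorphism. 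Equivariance is the formal step already recalled in the text: each $\phi\in\MCG(S;p)$ lifts to $\hat S$, extends uniquely to $\partial\hat S$, and this extension preserves the endpoints of high-filling rays and the clique partition, so it intertwines the action on $\mathcal{E}(S;p)$ with the action on $\partial L(S;p)$.

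I expect the real obstacles to be the two points where combinatorics must be traded for hyperbolic geometry: proving that the limiting endpoint of a geodesic ray always comes from a high-filling ray (and conversely that every high-filling ray is realized by some geodesic ray), and proving that the ``$k$-beginning like'' topology on cliques of high-filling rays is exactly the one induced by $\partial\hat S$ --- equivalently, that distinct cliques stay uniformly separated along geodesic rays, so that $F$ is open as well as continuous. Both require the precise bookkeeping of how geodesic rays in $\mathcal{R}(S;p)$ shadow points of $\partial\hat S$, which is the technical core of~\cite{BaWa18B}; once that is in hand, the remainder is routine manipulation with the dictionary and with hyperbolicity of $L(S;p)$.
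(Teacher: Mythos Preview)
The paper does not give its own proof of this statement: it appears in Section~\ref{Subsection:LoopGraph} as background material quoted from Bavard--Walker~\cite{BaWa18B}, alongside the other theorems of that subsection, with no argument supplied. So there is nothing in the paper to compare your proposal against; your sketch is an outline of how the Bavard--Walker proof itself might go, not an alternative to anything the authors wrote.

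That said, your outline is a reasonable caricature of the strategy in~\cite{BaWa18B}, and you have correctly identified the two genuinely hard points (that limits of geodesic rays in the main component land on high-filling rays, and that the ``$k$-beginning like'' topology matches the Gromov topology). One caution: your description of $F$ goes in the direction $\partial L(S;p)\to\mathcal{E}(S;p)$, whereas the statement asks for $F:\mathcal{E}(S;p)\to\partial L(S;p)$; this is harmless once you have a bijection, but be careful when you speak of continuity ``in both directions'' that you are actually checking continuity of the stated map and of its inverse, not just repeating the same argument twice.
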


In consequence any loxodromic element $\phi\in\MCG(S;p)$ fixes two cliques of high-filling rays $\mathcal{C}^-(\phi)$ and $\mathcal{C}^+(\phi)$.
\begin{theorem}[\cite{BaWa18B}, Theorem 7.1.1]
The cliques $\mathcal{C}^-(\phi)$ and $\mathcal{C}^+(\phi)$ are finite and of the same cardinality.
\end{theorem}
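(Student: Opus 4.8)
**Proof proposal for Theorem 7.1.1: The cliques $\mathcal{C}^-(\phi)$ and $\mathcal{C}^+(\phi)$ are finite and of the same cardinality.**

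The plan is to exploit the dynamics of the loxodromic element $\phi$ on the Gromov boundary $\partial L(S;p)$ together with the concrete description of that boundary as $\mathcal{E}(S;p) = \mathcal{H}(S;p)/\sim$. Since $\phi$ is loxodromic, its action on $\partial L(S;p)$ has exactly two fixed points, namely the equivalence classes $[\mathcal{C}^+(\phi)]$ and $[\mathcal{C}^-(\phi)]$, which are (by the previous theorem) the images of two cliques of high-filling rays. First I would show finiteness of each clique. A clique of high-filling rays, by Theorem~\ref{THM:CompletedRayGraph}, consists of pairwise-disjoint simple bi-infinite geodesics each limiting to $p$ at one end; lifting to the conical cover $\hat S$, the endpoints of such rays on $\partial\hat S$ are pairwise distinct (distinct isotopy classes of simple geodesics through the cusp cannot share an endpoint in $\partial\hat S$), and the disjointness downstairs forces a cyclic/linear order on these endpoints that is preserved. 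The key point is that $\phi$ fixes the clique $\mathcal{C}^+(\phi)$ \emph{setwise}, hence $\phi$ (via its extension $\hat\phi$ to $\partial\hat S$) permutes the finite-or-infinite set of endpoints of the rays in $\mathcal{C}^+(\phi)$; combined with the North-South dynamics of $\hat\phi$ on $\partial\hat S$ — only finitely many fixed/periodic points can accumulate correctly — one concludes the clique is finite. More carefully: if the clique were infinite, its endpoints would accumulate somewhere on $\partial\hat S$, and that accumulation point would have to be a $\hat\phi$-fixed point of a type incompatible with the loxodromic attracting/repelling structure, giving a contradiction.

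Next I would establish the equality of cardinalities $|\mathcal{C}^+(\phi)| = |\mathcal{C}^-(\phi)|$. The natural mechanism is to apply $\phi^{-1}$, or rather to use that $\phi$ itself, acting on the (finite) clique $\mathcal{C}^+(\phi)$, is a bijection of that set onto itself; this alone does not compare the two cliques. Instead, the comparison should come from the "weight" bookkeeping: each high-filling ray in a clique corresponds, on the level of the conical cover, to a distinguished lift, and the number of rays in the clique equals the number of separatrices at $p$ of the associated lamination/foliation data. The attracting clique $\mathcal{C}^+(\phi)$ records the stable data and $\mathcal{C}^-(\phi)$ the unstable data of the same element $\phi$; since $\phi$ and $\phi^{-1}$ are honest homeomorphisms of $\hat S$ exchanging the roles of stable and unstable, and since the combinatorial structure near the cusp $\hat p$ (the cyclic order of separatrices around $\hat p$) is the same object seen from the two elements, the two counts must agree. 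Concretely I would produce a $\phi$-equivariant bijection between the endpoints of rays in $\mathcal{C}^+(\phi)$ and those in $\mathcal{C}^-(\phi)$ induced by the local alternation of stable and unstable prongs around the singular point $p$ in any flat or hyperbolic realization.

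The main obstacle I expect is the finiteness argument, specifically ruling out infinite cliques that are fixed setwise: one must show that a $\hat\phi$-invariant, pairwise-disjoint family of simple geodesics through the cusp cannot be infinite. The delicate part is that $\hat\phi$ need not act with simple North-South dynamics on all of $\partial\hat S$ (it is a lift, and the deck transformation group is $\mathbb{Z}$), so one has to track how the infinite-cyclic deck action interacts with $\hat\phi$ and argue that an infinite invariant family would force either an invariant geodesic lamination with a leaf structure incompatible with $\phi$ being loxodromic on $L(S;p)$, or an accumulation of lift-endpoints violating the density/separation properties of loop-and-short-ray endpoints in $\partial\hat S$ used by Bavard and Walker. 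Once finiteness is in hand, the cardinality equality should follow formally from the symmetry between $\phi$ and $\phi^{-1}$ together with the local prong-alternation picture; I would present that as the shorter, cleaner half of the argument.
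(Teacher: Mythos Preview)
The paper does not prove this theorem at all: it is stated with the attribution ``\cite{BaWa18B}, Theorem 7.1.1'' and no argument is given in the present text. The result is quoted from Bavard--Walker solely in order to make sense of the notion of \emph{weight} of a loxodromic element, which is then used in the statement of Theorem~\ref{THM:Loxodromics}. There is therefore nothing in this paper to compare your proposal against.

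As for the proposal itself: the outline is in the right spirit but remains a sketch with real gaps. For finiteness, you correctly identify the difficulty---the lift $\hat\phi$ to the conical cover does not have simple North--South dynamics on $\partial\hat S$, and you would need a precise statement about how $\hat\phi$ interacts with the $\mathbb{Z}$ deck group to rule out an infinite $\hat\phi$-invariant family of disjoint rays. Your contradiction (``accumulation point incompatible with the attracting/repelling structure'') is not yet an argument. For the equality of cardinalities, your appeal to ``local alternation of stable and unstable prongs'' presupposes a foliation/lamination picture near $p$ that is available for the Hooper--Thurston--Veech examples in this paper but is not part of the hypotheses for a general loxodromic $\phi\in\MCG(S;p)$; you would need to justify that such a structure exists, or replace this step by a purely combinatorial argument on the boundary circle $\partial\hat S$. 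If you want the actual proof, you should consult \cite{BaWa18B} directly.
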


This allows to define the \emph{weight} of a loxodromic element $\phi$ as the cardinality of either $\mathcal{C}^-(\phi)$ or $\mathcal{C}^+(\phi)$. As said in the introduction, the importance of the weight of a loxodromic elements is given by the following fact:

\begin{lemma}[\cite{BaWa18B}, Lemma 9.2.7]
\label{Remark:neqWeightEqIndependence}
Let $g,h\in\MCG(S;p)$ be two loxodromic elements with different weights. Then in the language of
Bestvina-Fujiwara~\cite{BestvinaFujiwara02}, $g$ and $h$ are independent and anti-aligned.
\end{lemma}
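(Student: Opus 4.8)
The plan is to transfer everything to the Gromov boundary $\partial L(S;p)$ via the $\MCG(S;p)$-equivariant homeomorphism $F\colon\mathcal E(S;p)\to\partial L(S;p)$, and to exploit that the weight of a loxodromic element is the cardinality of its attracting (equivalently, repelling) clique of high-filling rays --- a quantity which is invariant under the whole action of $\MCG(S;p)$, not merely under conjugation by $g$ and $h$. Concretely, a loxodromic $\phi$ has exactly the two fixed points $F[\mathcal C^{+}(\phi)]$ and $F[\mathcal C^{-}(\phi)]$ on $\partial L(S;p)$, and these are the classes of cliques of $\mathcal R(S;p)$ consisting of exactly $w(\phi)$ high-filling rays. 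Since $\MCG(S;p)$ permutes isotopy classes of loops, short rays and long rays and preserves geometric disjointness, it acts simplicially on $\mathcal R(S;p)$, hence permutes its connected components and preserves their cardinalities; by Theorem~\ref{THM:CompletedRayGraph} the components other than the main one are precisely the cliques, so for every $a\in\MCG(S;p)$ the conjugate $aha^{-1}$ is again loxodromic, of weight $m'=w(h)$, with fixed points $F[a\cdot\mathcal C^{\pm}(h)]$.

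From this the weakest form of independence is immediate: if $w(g)=m\neq m'$, then a fixed point of $g$ on $\partial L(S;p)$ is the class of a size-$m$ clique, a fixed point of any $\MCG(S;p)$-conjugate of $h$ is the class of a size-$m'$ clique, and two cliques of different cardinalities cannot coincide; in particular $\{g^{-},g^{+}\}\cap\{h^{-},h^{+}\}=\emptyset$.

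To obtain the full Bestvina--Fujiwara statement I would upgrade this to the assertion that, for a uniform bound $B$, no $\MCG(S;p)$-translate of a quasi-axis of $h$ or of $h^{-1}$ fellow-travels a quasi-axis $\ell_g$ of $g$ along a segment of length exceeding $B$. Suppose not. Passing to a subsequence so that the relevant element of $\{h,h^{-1}\}$ is a fixed $h_{\ast}$ with quasi-axis $\ell$, we get $a_n\in\MCG(S;p)$ and subsegments $I_n\subset\ell_g$ with $\operatorname{length}(I_n)\to\infty$, each contained in a $C$-neighbourhood of $a_n\ell$. Translating by powers of $g$, which coarsely preserves $\ell_g$, we may assume every $I_n$ contains a fixed vertex $v_0$ and that the $I_n$ exhaust $\ell_g$; then by $\delta$-hyperbolicity the two endpoints of $a_n\ell$ converge in $\partial L(S;p)$ to the endpoints $g^{-},g^{+}$ of $\ell_g$ (the Gromov products at $v_0$ of the corresponding pairs of endpoints tending to infinity). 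But, as in the first paragraph, those endpoints are $F$-images of $\MCG(S;p)$-translates of the attracting and repelling cliques of $h_{\ast}$, hence classes of cliques of cardinality $w(h_{\ast})=w(h)=m'$, whereas $g^{-}$ and $g^{+}$ are classes of cliques of cardinality $m\neq m'$ --- which is impossible once one knows that a sequence of classes of size-$m'$ cliques in $\partial L(S;p)$ cannot converge to the class of a size-$m$ clique.

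This last semicontinuity statement is the main obstacle, and it is exactly where Bavard and Walker's fine analysis of $\mathcal H(S;p)$ is indispensable: using the topology in which two rays are close when they $k$-begin like each other, together with the combinatorics governing how a clique of high-filling rays fills $S$, one shows that the number of rays in a clique is detected by arbitrarily long initial data, and therefore cannot change under limits. Granting this, the displayed argument applied once to $h$ and once to $h^{-1}$ yields, respectively, the ``independent'' and the ``anti-aligned'' conclusions in the sense of~\cite{BestvinaFujiwara02}; this is the content of Lemma 9.2.7 of~\cite{BaWa18B}, whose proof amounts to carrying this out in full detail.
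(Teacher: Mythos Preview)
The paper does not give its own proof of this lemma: it is simply quoted from Bavard--Walker \cite{BaWa18B} as Lemma~9.2.7 there, with no argument supplied. So there is nothing in the present paper to compare your proposal against beyond the bare citation.

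As for the proposal itself: your first two paragraphs are fine and prove something real, namely that the weight is an invariant of the full $\MCG(S;p)$-action on $\partial L(S;p)$, so that no conjugate of $h$ can share a boundary fixed point with $g$. But the passage from ``disjoint fixed-point pairs for every conjugate'' to ``independent and anti-aligned'' in the Bestvina--Fujiwara sense is exactly where the content lies, and here you have a genuine gap. Your contradiction argument ends with the assertion that a sequence of boundary points represented by cliques of cardinality $m'$ cannot converge to a point represented by a clique of cardinality $m\neq m'$. You call this a ``semicontinuity statement'' and then defer it entirely to \cite{BaWa18B}; but convergence in $\partial L(S;p)$ is governed by Gromov products in the loop graph, and there is no a~priori reason this should detect the cardinality of the corresponding clique in $\mathcal{H}(S;p)$. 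Your one-line justification --- that ``the number of rays in a clique is detected by arbitrarily long initial data'' --- is not an argument, and it is not clear it is even true as stated: two high-filling rays in the same clique need not $k$-begin like each other for large $k$, so initial segments of a single ray do not obviously encode the size of its clique.

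In short, you have correctly isolated the obstruction but not removed it; the final sentence, which says that Bavard--Walker's Lemma~9.2.7 ``amounts to carrying this out in full detail,'' is an assertion about their proof rather than a proof, and makes the argument effectively circular. If you want a self-contained argument you would need to actually establish the lower- (or upper-) semicontinuity of clique size on $\mathcal{E}(S;p)$, or else replace the limiting argument by a direct one showing that long fellow-travelling forces an element of $\MCG(S;p)$ to carry one fixed clique onto the other.
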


\subsection{Flat surfaces}
In this section we recall only basic concepts about flat surfaces needed for the rest of the paper. It is important to remark that most of the flat surfaces considered in this text are of infinite type. For a detailed discussion on infinite-type flat surfaces we refer the reader to~\cite{DHV19}.

 We use $x,y$ for the standard coordinates in $\R^2$, $z = x + i y$ the corresponding number in $\C$ and $(r,\theta)$ for polar coordinates $x = r \cos(\theta)$ and $y = r \sin(\theta)$ (or $z = r \exp(i \theta)$). The Euclidean metric $dx^2 + dy^2$ can also be written as $(dr)^2 + (r d\theta)^2$.

Let $S$ be an orientable surface and $g$ is a metric defined on the complement of a discrete set $\Sigma\subset S$. A point $p\in S$ is called a \emph{conical singularity of angle $\pi n$} for some $n\in\mathbb{N}$ if there exists an open neighbourhood $U$ of $p$ such that $(U,g)$ is isometric to $(V,g_n)$, where $V\subset \C^*$ is a (punctured) neighborhood of the origin and $g_n=(dr)^2+(nrd\theta)^2$. If $n=2$ we call $p$ a \emph{regular point}. In general, regular points are not considered as singularities, though as we see in the proof of Theorem~\ref{THM:Loxodromics} sometimes it is convenient to think of them as marked points.

A \emph{flat surface structure} on a topological surface $S$ is the maximal atlas $\mathcal{T} = \{\phi_i:U_i \to \C\}$ where $(U_i)_{i\in\N}$ forms an open covering of $S$, each $\phi_i$ is a homeomorphism from $U_i$ to $\phi(U_i)$ and for each $i,j$ the transition map $\phi_j \circ \phi_i^{-1}: \phi_i(U_i \cap U_j) \to \phi_j(U_i \cap U_j)$ is a translation in $\C$ or a map of the form\footnote{These kind of maps are also called \emph{half-translations} and for this reason flat surfaces containing half-translation are also known as half-translation surfaces.} $z\to -z+\lambda$ for some $\lambda\in\C$.

\begin{definition}
\label{def:FlatSurfaceGeometric}
A \emph{flat surface} is a pair $M=(S,\mathcal{T})$ made of a connected topological surface $S$ and a flat surface structure $\cT$ on $S\setminus\Sigma$, where:
\begin{enumerate}
\item $\Sigma$ is a discrete subset of $S$ and
\item every $z\in\Sigma$ is a conical singularity.
\end{enumerate}
If the transition functions of $\mathcal{T}$ are all translations we call the pair $(S,\mathcal{T})$ a translation surface.
\end{definition}

\begin{remark}
    \label{Remark:OnFlatSurfaces}
In the preceding definition $S$ can be of infinite topological type and $\Sigma$ can be infinite. All points in $M\setminus\Sigma$ are regular. Every flat surface carries a flat metric given by pulling back the Euclidean metric in $\C$. We denote by $\widehat{M}$ the corresponding metric completion and ${\rm Sing}(M)\subset\widehat{M}$ the set of non-regular points, which can be thought as singularities of the flat metric. We stress that the structure of $M$ near a non-regular point is not well understood in full generality, see~\cite{BowmanValdez13} and \cite{Randecker18}.

Every flat surface $M$ which is not a translation surface has a (ramified) double covering $\pi:\widetilde{M}\to M$ such that $\widetilde{M}$ is a translation surface whose atlas is obtained by pulling back via $\pi$ the flat surface structure of $M$. This is called the orientation double covering.
If $z_0\in M$ is a conical singularity of angle $n\pi$ then, if $n$ is even, $\pi^{-1}(z_0)$ is formed by two conical singularities of total angle $n\pi$; whereas if $n$ is odd, $\pi^{-1}(z_0)$ is a conical singularity of total angle $2n\pi$. Hence the branching points of the orientation double covering are the conical singularities in $M$ of angle $n\pi$, with $n$ odd. This will be used in the proof of Theorem~\ref{THM:Loxodromics}.

On the other hand, flat surfaces can be defined using the language of complex geometry in terms of Riemann surfaces and quadratic differentials or by glueing (possibly infinite) families of polygons along their edges. A detailed discussion on these other definitions can be found in the first Chapter of~\cite{DHV19}.

\end{remark}

\noindent\textbf{Affine maps}.
A map $f\in Homeo(M)$ with $f(\Sigma)\subset\Sigma$ is called an \emph{affine automorphisms} if the
restriction $f:M\setminus\Sigma\to M\setminus\Sigma$ to flat charts is an
$\R$-affine map. We denote by $\Aff(M)$ the group of affine homeomorphisms of $M$ and by $\Aff^+(M)$ the subgroup of $\Aff(M)$ made of orientation preserving affine automorphisms (\emph{i.e} their linear part has positive determinant). Remark that the derivative $Df$ of an element $f\in \Aff(M)$ is an element of ${\mathrm{GL}}_2(\R)/\pm Id$.

\noindent\textbf{Translation flows}. For each direction $\theta\in\R/2\pi\Z$ we have a well-defined translation flow $F_{\C,\theta}^t: \C \to \C$ given by $F_{\C,\theta}^t(z) = z + t e^{i \theta}$. This flow defines a constant vector field $X_{\C,\theta}(z):=\frac{\partial F_{\C,\theta}^t}{\partial t}|_{t=0}(z)$.
Now let $M$ be a translation surface and $X_{M,\theta}$ the vector field on $\widehat{M}\setminus{\rm Sing}(M)$ obtained by pulling back $X_{\C,\theta}$ using the charts of the structure. For every $z\in M\setminus\Sing(M)$ let us denote by $\gamma_z:I\to M$, where $I\subset\R$ is an interval containing zero, the maximal integral curve of $X_{M,\theta}$ with initial condition $\gamma_z(0)=z$. We define $F_{M,\theta}^t(z):=\gamma_z(t)$ and call it the \emph{translation flow} on $M$ in direction $\theta$. Let us remark that formally speaking $F^t_{M,\theta}$ is not a flow because the trajectory of the curve $\gamma_z(t)$ may reach a point of $\Sing(M)$ in finite time. A trajectory of the translation flow whose maximal domain of definition is a bounded interval (on both sides) is called a \emph{saddle connection}.
When there is no need to distinguish translation flows in different translation surfaces we abbreviate $F_{M,\theta}^t$ and $X_{M,\theta}$ by $F_{\theta}^t$ and $X_{\theta}$ respectively. If $M$ is a flat surface but not a translation surface the pull back of the vector field $e^{i\theta}$ to $M$ does not define a global vector field on $M$ but it does define a \emph{direction field}. In both cases integral curves defines a foliation $\mathcal{F}_{\theta}$ on $M\setminus \Sing(M)$.

\begin{definition}[Cylinders and strips]
\label{def:CylindersAndStrips}
A \emph{horizontal cylinder} $C_{c,I}$ is a translation surface of the form $([0,c]\times I) / \sim$, where $I\subset\R$ is open (but not necessarily bounded), connected, and where $(0,s)$ is identified with $(c,s)$ for all $s\in I$. The numbers $c$ and $h=|I|$ are called the \emph{circumference} and \emph{height} of the cylinder respectively. The \emph{modulus} of $C_{c,I}$ is the number $\frac{h}{c}$.

 A \emph{horizontal strip} $C_{\infty,I}$ is a translation surface of the form $\R\times I$, where $I$ is a bounded open interval. Analogously, the height of the horizontal strip is $h=|I|$.

An open subset of a translation surface $M$ is called a \emph{cylinder} (respectively a \emph{strip}) in direction $\theta$ (or parallel to $\theta$) if it is isomorphic, as a translation surface, to $e^{-i\theta}C_{c,I}$ (respect. to $e^{-i\theta}C_{\infty,I}$).
\end{definition}

One can think of strips as cylinders of infinite circumference and finite height. For flat surfaces which are not translation surfaces the definition of cylinder still makes sense, though its direction is well defined only up to change of sign.

\begin{definition}
    \label{DefCylinderDecomposition}
Let $M$ be a flat surface and $\theta\in\R/2\pi\Z$ a fixed direction. A collection of maximal cylinders $\{C_i\}_{i\in I}$ parallel to $\theta$ such that $\cup_{i\in I} C_i$ is dense in $M$ is called a \emph{cylinder decomposition} in direction $\theta$.
\end{definition}


\section{The Hooper-Thurston-Veech construction}
	\label{Subsection:HTVConstruction}

The main result of this section is a generalization of the Thurston-Veech construction for infinite-type surfaces. The key ingredient for this generalization is the following:
\begin{lemma}
\label{lem:CylinderDecompImpliesParabolic}
Let $M$ be a flat surface for which there is a cylinder decomposition in the horizontal direction. Suppose that every maximal cylinder in this decomposition has modulus equal to $\frac{1}{\lambda}$ for some $\lambda>0$. Then there exists a unique affine automorphism $\phi_h$ which fixes the boundaries of the cylinders and whose derivative (in $\PSL(2,\R)$) is given by the matrix $\begin{psmallmatrix}1 & \lambda\\ 0 & 1\end{psmallmatrix}$. Moreover, the automorphism $\phi_h$ acts as a Dehn twist along the core curve of each cylinder.
\end{lemma}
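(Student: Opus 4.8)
The plan is to build the affine automorphism $\phi_h$ cylinder by cylinder and then check that the local pieces glue into a globally well-defined homeomorphism of $M$. On a single horizontal cylinder $C_{c,I}$ of modulus $\frac{1}{\lambda}$, after rescaling so that $c=1$ and $I=(0,h)$ with $h=\frac{1}{\lambda}$, the linear map $A=\begin{psmallmatrix}1 & \lambda\\ 0 & 1\end{psmallmatrix}$ sends the fundamental domain $[0,1]\times(0,h)$ to the parallelogram with vertices $0$, $1$, $1+\lambda h=2$, $\lambda h=1$ over the base point $(0,h)$; since $\lambda h=1$, the horizontal shear by a full circumference at the top of the cylinder descends, under the identification $(0,s)\sim(1,s)$, to a self-map of $C_{c,I}$ that is the identity on the bottom boundary $\{y=0\}$ and a full translation-by-circumference on the top boundary $\{y=h\}$. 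This is precisely the standard model for a Dehn twist about the core curve $\{y=h/2\}$, and its derivative in the flat charts is $A$. Undoing the rescaling (conjugating by the appropriate homothety) shows the same statement for $C_{c,I}$ with general $c$: the key point is that the ratio $\frac{h}{c}=\frac{1}{\lambda}$ is exactly what forces $A$ to carry the cylinder to itself, fixing both boundary components setwise and the bottom one pointwise.

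**Next I would** address gluing. The cylinders $\{C_i\}_{i\in I}$ in the decomposition are maximal and their union is dense in $M$, so $M\setminus\bigcup_i C_i$ is a union of horizontal saddle connections and singular points. On each $C_i$ define $\phi_h$ by the model above, but normalized so that $\phi_h$ is the identity on the \emph{lower} boundary of $C_i$; since each boundary saddle connection is shared between (the closures of) the cylinders lying above and below it, and $\phi_h$ fixes every such saddle connection pointwise from the cylinder below, the local definitions agree on overlaps and on the complement $M\setminus\bigcup_i C_i$ we set $\phi_h$ to be the identity. The resulting map is a homeomorphism of $M$ fixing $\Sigma$ (each singular point is an endpoint of horizontal saddle connections, hence fixed), it is affine with derivative $A$ in every flat chart meeting the interior of some $C_i$, and it restricts to a Dehn twist along each core curve. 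Uniqueness follows because two affine automorphisms with the same derivative differ by a translation automorphism, and an affine automorphism fixing every cylinder boundary pointwise and equal to a translation on the dense open set $\bigcup_i C_i$ must be the identity.

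**The main obstacle** I anticipate is the gluing across saddle connections and, more subtly, near the singular points of the flat metric, which in the infinite-type setting need not be cone points of finite angle — Remark~\ref{Remark:OnFlatSurfaces} explicitly flags that the structure of $\widehat M$ near a non-regular point is not understood in general. One must argue that \emph{orientations} of the horizontal direction are consistent along a saddle connection shared by two cylinders (so that "the cylinder above" and "the cylinder below" are unambiguous and the shears match up rather than cancel), which uses that $M$, or locally its orientation double cover, is orientable and that the horizontal foliation is coherently oriented there. A clean way to handle both the gluing and the singularities at once is to observe that $\phi_h$ is by construction a limit, in the appropriate topology, of Dehn twists supported on finite subconfigurations of cylinders, so continuity is automatic on each $C_i$ and on the complement, and one only needs to verify continuity at points of $\partial C_i$, which reduces to the single-cylinder computation already performed. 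I would also note that the statement is purely about the translation-surface case; for a genuine half-translation surface one passes to the orientation double cover, carries out the construction there, and descends, the derivative being well-defined in $\PSL(2,\R)$ precisely because $-A$ induces the same map.
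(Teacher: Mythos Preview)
The paper does not actually prove this lemma; it is stated without proof as the ``key ingredient'' for the Hooper--Thurston--Veech construction and the text immediately moves on to its consequence for non-horizontal directions. So there is no paper argument to compare against.

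Your approach is the standard one and is correct. One small expository point: in your first paragraph you say the map fixes ``both boundary components setwise and the bottom one pointwise,'' but a translation by the full circumference on the top circle \emph{is} the identity pointwise, not merely setwise. You implicitly rely on this stronger fact in your second paragraph (``$\phi_h$ fixes every such saddle connection pointwise from the cylinder below''), and it is precisely what makes the gluing across horizontal saddle connections go through without any further matching condition --- each saddle connection is fixed pointwise from both the cylinder above and the cylinder below. With that clarification, the local-to-global assembly and the uniqueness argument are fine. Your remark about half-translation surfaces is also on target: since $-I$ commutes with the shear matrix, the locally defined affine shear is compatible with transitions of the form $z\mapsto -z+c$, which is why the derivative is only well defined in $\PSL(2,\R)$ and why passing to the orientation double cover is unnecessary here (though harmless).
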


In general, if $M$ is a flat surface having a cylinder decomposition in direction $\theta\in\R/2\pi\Z$ for which every cylinder has modulus equal to $\frac{1}{\lambda}$ for some $\lambda\in\R^*$, one can apply to $M$ the rotation $R_\theta\in\SO(2,\R)$ that takes $\theta$ to the horizontal direction and apply the preceding lemma. For example, if $\theta=\frac{\pi}{2}$, then there exists a unique affine automorphism $\varphi_v$ which fixes the boundaries of the vertical cylinders, acting on each cylinder as a Dehn twists and with derivative (in $\PSL(2,\R)$) is given by the matrix $\begin{psmallmatrix}1 & 0\\ -\lambda & 1\end{psmallmatrix}$. In particular, if $M$ is a flat surface having cylinder decompositions in the horizontal and vertical directions such that each cylinder involved have modulus $\frac{1}{\lambda}$, then $\Aff(M)$ has two affine multitwists $\phi_h$ and $\phi_v$ with $D\phi_h=\begin{psmallmatrix}1 & \lambda\\0 & 1\end{psmallmatrix}$ and $D\phi_v=\begin{psmallmatrix}1 & 0\\ -\lambda & 1\end{psmallmatrix}$ in $\PSL(2,\mathbb{R})$.

Let us recall now the Thurston-Veech construction (see [Farb-Margalit], Theorem 14.1):

\begin{theorem}[Thurston-Veech construction]
    \label{THM:ThursonsConstruction}
Let $\alpha=\{\alpha_i\}_{i=1}^n$ and $\beta=\{\beta_j\}_{j=1}^m$ be two multicurves filling a finite type surface $S$. Then there exists $\lambda=\lambda(a,b)\in\mathbb{R}^*$ and a representation $\rho:\langle T_\alpha,T_\beta\rangle\to\PSL(2,\R)$ given by:
$$
T_\alpha\to \begin{pmatrix} 1 & \lambda \\ 0 & 1\end{pmatrix},\hspace{5mm} T_\beta\to =\begin{pmatrix} 1 & 0 \\ -\lambda & 1\end{pmatrix}.
$$
Moreover, an element $f\in\langle T_\alpha,T_\beta\rangle$ is periodic, reducible or pseudo-Anosov according to whether $\rho(f)$ is elliptic, parabolic or hyperbolic.
\end{theorem}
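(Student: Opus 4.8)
The plan is to manufacture, out of the combinatorics of $\alpha\cup\beta$, a flat surface $M=(S,\cT)$ on which $T_\alpha$ and $T_\beta$ are realized by affine multitwists with the prescribed derivatives, and then to take $\rho$ to be the derivative homomorphism on the subgroup they generate.

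First I would encode the intersection pattern in the nonnegative $n\times m$ matrix $N=\bigl(i(\alpha_i,\beta_j)\bigr)$. Since $\alpha\cup\beta$ fills the connected surface $S$, its configuration graph is connected, so the symmetric nonnegative matrix $\begin{psmallmatrix}0 & N\\ N^{T} & 0\end{psmallmatrix}$ is irreducible; by Perron-Frobenius it has a positive eigenvalue $\lambda$ with a positive eigenvector $(h_1,\dots,h_n,v_1,\dots,v_m)$, unique up to scaling, so that $\sum_j N_{ij}v_j=\lambda h_i$ and $\sum_i N_{ij}h_i=\lambda v_j$. Putting $\alpha$ and $\beta$ in minimal position, $\alpha\cup\beta$ cuts $S$ into (possibly punctured) disks whose boundary arcs alternate between $\alpha$ and $\beta$, hence into $2k$-gons. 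To each point of $\alpha_i\cap\beta_j$ I attach a Euclidean rectangle of height $h_i$ and width $v_j$, with the arc of $\alpha_i$ through it horizontal and the arc of $\beta_j$ vertical, gluing two rectangles along a vertical edge whenever their crossings are consecutive along some $\alpha_i$ and along a horizontal edge whenever they are consecutive along some $\beta_j$. The two eigenvector identities are exactly what makes the edges that get identified have equal length, so the identifications are translations (or half-translations) and the quotient is a flat surface $M$ in which each complementary $2k$-gon closes up to a cone point of total angle $k\pi$ (a square becoming a regular point).

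The rectangles meeting a fixed $\alpha_i$ assemble into a horizontal cylinder of height $h_i$ and circumference $\sum_j N_{ij}v_j=\lambda h_i$, hence of modulus $1/\lambda$; these cylinders exhaust $M$, and symmetrically the $\beta_j$ yield a vertical cylinder decomposition with every modulus again equal to $1/\lambda$. Lemma~\ref{lem:CylinderDecompImpliesParabolic} applied to the horizontal decomposition gives $\phi_h\in\Aff(M)$ with $D\phi_h=\begin{psmallmatrix}1 & \lambda\\ 0 & 1\end{psmallmatrix}$ twisting once along each $\alpha_i$, and its vertical counterpart gives $\phi_v$ with $D\phi_v=\begin{psmallmatrix}1 & 0\\ -\lambda & 1\end{psmallmatrix}$ twisting along each $\beta_j$; after fixing orientations these represent $T_\alpha$ and $T_\beta$ under the natural map $\Aff(M)\to\MCG(S)$. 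Because $M$ carries a cone point that map is injective, so it restricts to an isomorphism $\langle\phi_h,\phi_v\rangle\cong\langle T_\alpha,T_\beta\rangle$; composing its inverse with the derivative homomorphism $D$, which carries $\langle\phi_h,\phi_v\rangle$ into $\PSL(2,\R)$, defines $\rho$ with $\rho(T_\alpha)$ and $\rho(T_\beta)$ the stated matrices.

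For the trichotomy, given $f\in\langle T_\alpha,T_\beta\rangle$ let $\tilde f\in\langle\phi_h,\phi_v\rangle$ be its affine representative and $A=D\tilde f=\rho(f)$. If $A$ is hyperbolic, its eigendirections pull back through the charts of $\cT$ to a transverse pair of measured foliations on $M$, with a $k$-prong singularity at each $2k$-gon cone point, invariant under $\tilde f$ with dilatation factors the eigenvalues of $A$; hence $\tilde f$, and therefore $f$, is pseudo-Anosov. Conversely a pseudo-Anosov affine automorphism has hyperbolic derivative, its stable and unstable foliations being eigendirections of the derivative with reciprocal dilatations. If $A$ is elliptic, then since $\rho(\langle T_\alpha,T_\beta\rangle)$ lies in the Veech group $D(\Aff(M))$, a \emph{discrete} subgroup of $\PSL(2,\R)$, the element $A$ has finite order; hence $\tilde f$ has finite order modulo the finite kernel of $D$, so $\tilde f$ and $f$ have finite order and $f$ is periodic. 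Finally, if $A$ is parabolic then $\tilde f$, and hence $f$, has infinite order, so $f$ is not periodic, and by the previous observation $f$ is not pseudo-Anosov; by the Nielsen--Thurston classification $f$ is reducible. Since the three possibilities partition both sides, the trichotomy is an equivalence. The step I expect to be the main obstacle is checking that the rectangle gluings close up consistently into a flat surface with precisely the stated cone-angle structure---this is where the Perron-Frobenius eigenvector enters in an essential way; Lemma~\ref{lem:CylinderDecompImpliesParabolic}, the injectivity of $\Aff(M)\to\MCG(S)$, and the discreteness of the Veech group are then applied more or less formally.
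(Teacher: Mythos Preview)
Your proposal is correct and follows essentially the same approach the paper sketches: Perron--Frobenius on the (irreducible) adjacency matrix of $\mathcal{G}(\alpha\cup\beta)$ yields the positive eigenvector that determines rectangle dimensions, the resulting flat structure has horizontal and vertical cylinder decompositions of common modulus $1/\lambda$, and Lemma~\ref{lem:CylinderDecompImpliesParabolic} then produces the affine multitwists with the prescribed derivatives. The paper does not spell out the trichotomy argument (it simply cites Farb--Margalit, Theorem~14.1), so your treatment of the elliptic/parabolic/hyperbolic cases via discreteness of the Veech group and Nielsen--Thurston is additional detail rather than a different route; the one point to tighten is the justification that $\Aff(M)\to\MCG(S)$ is injective, which for finite-area flat surfaces of negative Euler characteristic is a standard fact but is not literally a consequence of ``$M$ carries a cone point'' alone.
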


The proof of Theorem~\ref{THM:ThursonsConstruction} uses Lemma~\ref{lem:CylinderDecompImpliesParabolic}. More precisely, one needs to find a flat surface structure on $S$ which admits horizontal and vertical cylinder decompositions $\{H_i\}_{i=1}^n$ and $\{V_j\}_{j= 1}^m$ such that each cylinders has modulus equal to $\frac{1}{\lambda}$ and for which $\alpha_i$ and $\beta_j$ are the core curves of $H_i$ and $V_j$ for each $i=1,\ldots,n$ and $j=1,\ldots,m$, respectively. By the Perron-Frobenius theorem, such a flat structure always exists and is unique up to scaling.


\begin{definition}
\label{def:ConfigurationGraph}
Let $\alpha=\cup_{i\in I}\alpha_i$ and $\beta=\cup_{j\in J}\beta_j$ be two multicurves in a topological surface
$S$ (in minimal position, not necessarily filling).
The \emph{configuration graph} of the pair $(\alpha,\beta)$ is the bipartite graph $\cG(\alpha\cup\beta)$
whose vertex set is $I\sqcup J$ and where there is an edge between two vertices $i\in I$ and $j\in J$ for every
intersection point between $\alpha_i$ and $\beta_j$.
\end{definition}

\noindent\textbf{Cylinder decompositions, bipartite graphs and harmonic functions}. Let $M$ be a flat surface having horizontal and vertical cylinder decompositions $\mathcal{H}=\{H_i\}_{i\in I}$ and $\mathcal{V}=\{V_j\}_{j\in J}$ such that each cylinder has modulus $\frac{1}{\lambda}$ for some $\lambda>0$. For every $i \in I$ let $\alpha_i$ be the
core curve of $H_i$ and for every $j \in J$ let $\beta_j$ be
the core curve of $V_j$.
Then $\alpha=\{\alpha_i\}_{i\in I}$ and $\beta=\{\beta_j\}_{j\in J}$ are multicurves whose union fills $M$. Let $\textbf{h}: I \cup J \to \R_{>0}$ be the function which to
an index associates the height of the corresponding cylinder.
Then $A\textbf{h} = \lambda \textbf{h}$ where $A$ is the
adjacency operator of the graph $\cG(\alpha \cup \beta)$, that is:
\begin{equation}
\label{eq:AdjacencyOperator}
(A\textbf{h})(v) := \sum_{w \sim v} \textbf{h}(w)
\end{equation}
where the sum above is taken over over edges $\{v,w\}$ having $v$ as one endpoint, that is, the summand  $\textbf{h}(w)$ appears as many times as there are edges between the vertices $v$ and $w$.
\begin{definition}
\label{def:LambdaHarmonicFunction}
Let $\cG = (V,E)$ be a graph with vertices of finite degree and $A: V^\R \to \R$ as in (\ref{eq:AdjacencyOperator}). A function $\textbf{h}: V \to \R$ that satisfies $A\textbf{h}=\lambda \textbf{h}$
is called a \emph{$\lambda$-harmonic function} of $\cG$.
\end{definition}

In summary: the existence of a horizontal and a vertical cylinder decomposition where each cylinders has  modulus $\frac{1}{\lambda}$ implies the existence of a \emph{positive} $\lambda$-harmonic function of configuration graph of the multicurves given by the core curves of the cylinders in the decomposition.

The idea to generalize Thurston-Veech's construction for infinite-type surfaces is to reverse this process: given a pair of multicurves $\alpha$ and $\beta$ whose union fills $S$, every positive $\lambda$-harmonic function of $\cG(\alpha\cup\beta)$ can be used to construct construct horizontal and  vertical cylinder decompositions of $S$ where all cylinders have the same modulus.

\begin{theorem}[Hooper-Thurson-Veech construction]
    \label{THM:HooperThurstonConstruction}
Let $S$ be an infinite-type surface. Suppose that there exist two multicurves $\alpha=\{\alpha_i\}_{i\in I}$ and $\beta=\{\beta_j\}_{j\in J}$ filling $S$ such that:
\begin{enumerate}
\item there is an uniform upper bound on the degree of the vertices of the configuration graph $\mathcal{G}(\alpha\cup\beta)$ and
\item every component of the complement of $S\setminus\alpha\cup\beta$ is a polygon with a finite number of sides\footnote{That is, each component of $S\setminus \alpha\cup\beta$ is a disc whose boundary consists of finitely man subarcs of curves in $\alpha\cup\beta$.}.
\end{enumerate}
Then there exists $\lambda_0\geq 2$ such that for every $\lambda\geq\lambda_0$ there exists a positive $\lambda$-harmonic function $\textbf{h}$ on $\cG(\alpha\cup\beta)$ which defines a flat surface structure $M=M(\alpha,\beta,\textbf{h})$ on $S$ admiting horizontal and vertical cylinder decompositions $\cH=\{H_i\}_{i\in I}$ and $\cV=\{V_j\}_{j\in J}$ where each cylinder has modulus $\frac{1}{\lambda}$. Moreover,  for every $i\in I$ and $j\in J$ the core curves of $H_i$ and $V_j$ are $\alpha_i$ and $\beta_j$, respectively.  In particular, we have (right) multitwists $T_\alpha$ and $T_\beta$ in $\Aff(M)$ which fix the boundary of each cylinder in $\cH$ and $\cV$, respectively. For each $\lambda\geq\lambda_0$, the derivatives of these multitwists define a representation $\rho:\langle T_\alpha,T_\beta\rangle\to\PSL(2,\mathbb{R})$ given by:
$$
T_\alpha\to \begin{pmatrix} 1 & \lambda \\ 0 & 1\end{pmatrix},\hspace{5mm} T_\beta\to \begin{pmatrix} 1 & 0 \\ -\lambda & 1\end{pmatrix}.
$$
\end{theorem}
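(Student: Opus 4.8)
The plan is to reverse the discussion immediately preceding the statement: from a positive $\lambda$-harmonic function on $\cG(\alpha\cup\beta)$ we build the flat surface by gluing one Euclidean rectangle per edge of the configuration graph, and then read off the affine multitwists from Lemma~\ref{lem:CylinderDecompImpliesParabolic}. So the argument falls into two essentially independent parts: (i) producing $\mathbf{h}$, which is pure spectral theory of the bounded-degree bipartite graph $\cG:=\cG(\alpha\cup\beta)$; and (ii) turning $\mathbf{h}$ into the translation surface $M=M(\alpha,\beta,\mathbf{h})$ and identifying $\Aff(M)$.

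\emph{Part (i).} First I would note that under hypotheses (1)--(2) the graph $\cG$ is connected (since $S$ is connected and $\alpha\cup\beta$ fills $S$: going around any complementary polygon its $\alpha$- and $\beta$-sides give a chain in $\cG$, and adjacent polygons share a side, so all curves lie in one component), infinite (otherwise $S$ would be a finite CW-complex, hence of finite type), of minimum degree at least $2$ (minimal position forbids bigons, so each curve meets the others at least twice), and of degree bounded by some $\Delta<\infty$. Hence the adjacency operator $A$ of (\ref{eq:AdjacencyOperator}) is a bounded self-adjoint operator on $\ell^2(I\sqcup J)$ with $\|A\|\le\Delta$, and one sets $\lambda_0:=\rho(\cG)$, the $\ell^2$-spectral radius, i.e. the top of $\mathrm{spec}(A)$ (this equals $\lim_n\|A^{2n}\delta_v\|^{1/2n}$ as $\cG$ is bipartite). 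Then $2\le\lambda_0\le\Delta$: the upper bound is $\|A\|\le\Delta$, and for the lower bound a connected infinite graph of minimum degree $\ge2$ contains either a cycle or paths of unbounded length, each of spectral radius $\ge 2$, and spectral radius is monotone under subgraphs. The input I would then quote is the standard fact that a connected locally finite graph admits a positive $\lambda$-harmonic function precisely when $\lambda\ge\rho(\cG)$ (see~\cite{Hooper15}, and~\cite{DHV19} for the present normalization); in particular such $\mathbf{h}$ is typically very far from unique, which is the source of the uncountable family of flat structures advertised in the introduction.

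\emph{Part (ii).} Fix $\lambda\ge\lambda_0$ and a positive $\lambda$-harmonic $\mathbf{h}$. To each edge of $\cG$, that is to each point of $\alpha_i\cap\beta_j$, associate a Euclidean rectangle with horizontal side $\mathbf{h}(j)$ and vertical side $\mathbf{h}(i)$. Along $\alpha_i$ the intersection points with $\beta$ occur in a cyclic order; gluing, in that order, the right vertical side of one rectangle to the left vertical side of the next (both of length $\mathbf{h}(i)$) produces a horizontal cylinder $H_i$ of height $\mathbf{h}(i)$ and circumference $\sum_{j\sim i}\mathbf{h}(j)=(A\mathbf{h})(i)=\lambda\mathbf{h}(i)$, hence of modulus $\tfrac1\lambda$; symmetrically the cyclic order along $\beta_j$ glues top to bottom sides (length $\mathbf{h}(j)$) into a vertical cylinder $V_j$ of modulus $\tfrac1\lambda$. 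All transition maps are translations, so the glued space $M$ is a translation surface carrying by construction the decompositions $\cH=\{H_i\}_{i\in I}$ and $\cV=\{V_j\}_{j\in J}$ with core curves $\alpha_i$, $\beta_j$. Around each complementary $2k$-gon of $\alpha\cup\beta$ exactly $2k$ quarter-rectangles get identified, producing a cone point of angle $k\pi$ (a regular point when $k=2$); since $\alpha,\beta$ are locally finite these cone points form a discrete set $\Sigma$, so $M$ is a flat surface in the sense of Definition~\ref{def:FlatSurfaceGeometric}. Finally $M$ carries $\bigcup_i\alpha_i\cup\bigcup_j\beta_j$ as a $1$-complex with exactly the same cyclic-order and complementary-face data as $\alpha\cup\beta\subset S$, so $M$ is homeomorphic to $S$ by the classification of surfaces from their cell structures; this routine but infinite-type verification is carried out in~\cite{DHV19}.

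To finish, Lemma~\ref{lem:CylinderDecompImpliesParabolic} applied to the horizontal decomposition $\cH$ of $M$ (all moduli $\tfrac1\lambda$) yields a unique affine automorphism Dehn-twisting along each $\alpha_i$; as the $H_i$ are dense with core curves $\alpha_i$, this is the multitwist $T_\alpha$, with $DT_\alpha=\begin{psmallmatrix}1&\lambda\\0&1\end{psmallmatrix}$; the $\theta=\tfrac\pi2$ version of the lemma applied to $\cV$ gives $T_\beta$ with $DT_\beta=\begin{psmallmatrix}1&0\\-\lambda&1\end{psmallmatrix}$, and these derivatives define $\rho$. I expect the two genuinely delicate points to be: establishing the threshold statement of Part (i) (citing it is easy, but one must make sure $\lambda_0$ is finite and $\ge2$, which is why the degree hypotheses are there), and the homeomorphism claim $M\cong S$ in Part (ii) — for infinite-type $S$ this cannot be read off an Euler-characteristic count and forces one to track the (finitely many sides, by hypothesis (2)) complementary polygons cell by cell; everything else is bookkeeping with the harmonicity identity $A\mathbf{h}=\lambda\mathbf{h}$.
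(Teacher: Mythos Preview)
Your argument follows the same two-step strategy as the paper: cite the spectral theory of bounded-degree graphs for the existence of $\mathbf h$ (the paper does not spell out the $\lambda_0\ge 2$ bound as carefully as you do, it simply refers to Hooper's appendix), then assign one Euclidean rectangle $[0,\mathbf h(j)]\times[0,\mathbf h(i)]$ per intersection point and compute the circumference of $H_i$ as $(A\mathbf h)(i)=\lambda\,\mathbf h(i)$, exactly as you do.

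Two points deserve comment. First, your assertion that ``all transition maps are translations, so the glued space $M$ is a translation surface'' is not correct in general. The gluing you describe requires a choice of which side of each rectangle is ``right'' along $\alpha_i$ and which is ``top'' along $\beta_j$; these choices amount to orienting the curves, and there is no reason the orientations can be made compatible at every intersection point. When $\alpha_i$ and $\beta_j$ meet with both signs, assembling $V_j$ after $H_i$ forces a gluing of the form $z\mapsto -z+c$. Equivalently, a complementary $2k$-gon yields a cone point of angle $k\pi$, which for $k$ odd is incompatible with a translation atlas. The paper notes explicitly that $M(\alpha,\beta,\mathbf h)$ is in general only a half-translation surface. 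This does not damage the theorem (the statement asks for a flat surface, and Lemma~\ref{lem:CylinderDecompImpliesParabolic} and the derivative computation live in $\PSL(2,\R)$), but you should replace ``translation surface'' by ``flat surface'' and drop the clause about transition maps.

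Second, a presentational difference: rather than build $M$ abstractly and then argue $M\cong S$, the paper takes the dual graph $(\alpha\cup\beta)^*$ in $S$, observes that it cuts $S$ into topological quadrilaterals $R_e$ indexed by $E(\cG)$, and declares each $R_e$ to be the Euclidean rectangle of the prescribed dimensions. This puts the flat structure directly on $S$ and sidesteps the homeomorphism verification you flag as delicate. Your route is fine, but the dual-graph packaging is what makes hypothesis~(2) do its work cleanly.
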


\begin{remark}
    \label{Remark:CreditsHTVConstruction}
Theorem~\ref{THM:HooperThurstonConstruction} is a particular case of a more general version of Hooper-Thurston-Veech's construction due to V. Delecroix and the second author whose final form was achieved after discussions with the first author, see~\cite{DHV19}. We do not need this more general version for the proof of our main results. On the other hand, the second assumption on the multicurves in~Theorem~\ref{THM:HooperThurstonConstruction} makes the proof simpler that in the general case and for this reason we decided to sketch it.
Many of the key ideas in the proof of the result above and its general version appear already in the work of P. Hooper~\cite{Hooper15}. The main difference is that P. Hooper starts with a bipartite infinite graph with uniformly bounded valence and then, using a positive harmonic function $h$ on that graph, constructs a translation surface. We take as input an infinite-type topological surface $S$ and a pair of filling multicurves to construct a flat surface structure on $S$, which is a not \emph{a priori} a translation surface structure.
\end{remark}


\noindent\textbf{Proof of Theorem~\ref{THM:HooperThurstonConstruction}}. 
The union $\alpha\cup\beta$ of the multicurves $\alpha$ and $\beta$ defines a graph embedded in $S$: the vertices are points in $\bigcup_{(i,j)\in I\times J}\alpha_i\cap\beta_j$ and edges are the segments forming the connected components of $\alpha\cup\beta\setminus \bigcup_{(i,j)\in I\times J}\alpha_i\cap\beta_j$. Abusing notation we write $\alpha\cup\beta$ to refer to this graph. It is important not to confuse the (geometric) graph $\alpha\cup\beta$ with the (abstract) configuration graph $\cG(\alpha\cup\beta)$. To define the flat structure $M$ on $S$ we consider a dual graph $(\alpha\cup\beta)^*$ defined as follows. If $S$ had no punctures then $(\alpha\cup\beta)^*$ is just the dual graph of $\alpha\cup\beta$. If $S$ has punctures\footnote{We think of punctures also as isolated ends or points at infinity.} we make the following convention to define the vertices of $(\alpha\cup\beta)^*$: for every connected component $D$ of $S\setminus \alpha\cup\beta$ homeomorphic to a disc choose a unique point $v_D$ inside the connected component. If $D$ is a punctured disc, then choose $v_D$ to be the puncture.


\begin{figure}[!ht]
\begin{center}
\includegraphics{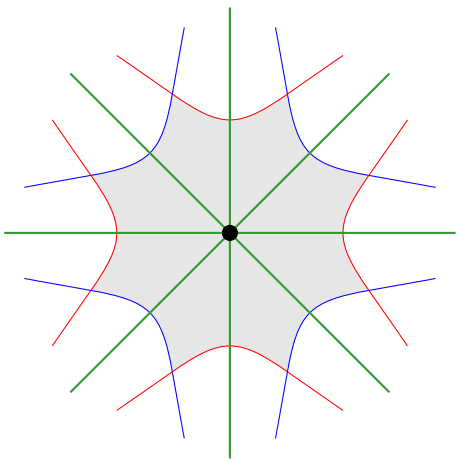}
\end{center}
\caption{The graph $(\alpha\cup\beta)^*$.}
\label{fig:DualGraph}
\end{figure}

The points $v_D$ chosen above are the vertices of $(\alpha\cup\beta)^*$. Vertices in this graph are joined by an edge in $S$ if the closures of the corresponding connected components of $S\setminus \alpha\cup\beta$ share an edge of $\alpha\cup\beta$. Edges are chosen to be pairwise disjoint. Remark that $(\alpha\cup\beta)^*$ might have loops. See Figure~\ref{fig:DualGraph}.


Given that $\alpha\cup\beta$ fills, every connected component $S\setminus (\alpha\cup\beta)^*$ is a topological quadrilateral which contains a unique vertex of $\alpha\cup\beta$. Hence there is a well defined bijection between edges in the abstract graph $\mathcal{G}(\alpha\cup\beta)$ and the set of these quadrilaterals. This way, for every edge $e\in E(\mathcal{G}(\alpha\cup\beta))$ we denote by $R_e$ the closure in $S$ of the corresponding topological quadrilateral with the convention to add to $R_e$ vertices $v_D$ corresponding to punctures
in $S$.

Note that there are only two sides of $R_e$ intersecting the multicurve $\alpha$, which henceforth are called \emph{vertical sides}. The other two sides are in consequence called \emph{horizontal}, see Figure~\ref{fig:MakingRectanglesEuclidean}.

We now build a flat surface structure on $S$ by declaring the topological
quadrilaterals $R_e$ of the dual graph $(\alpha \cup \beta)^*$ to be Euclidean
rectangles. Given that there is a uniform upper bound on the degree of the vertices of the configuration graph $\cG(\alpha \cup \beta)$ there exists\footnote{For a more detailed discussion on $\lambda$-harmonic functions we recommend Appendix C in~\cite{Hooper15} and reference therein.} $\lambda_0\geq 2$ such that for every $\lambda\geq\lambda_0$ there exists a positive $\lambda$-harmonic function $\textbf{h}: \cG(\alpha \cup \beta) \to \R_{>0}$. We use this function to define compatible heights of horizontal and vertical cylinders.
More precisely, let us define the maps:
$$
p_\alpha:E(\cG(\alpha\cup\beta)) \to V(\cG(\alpha\cup\beta))
\qquad \text{and} \qquad
p_\beta: E(\cG(\alpha\cup\beta)) \to V(\cG(\alpha\cup\beta))
$$
which to an edge $e$ of the configuration graph $\cG(\alpha\cup\beta)$ associate
its endpoints $p_\alpha(e)$ in $I$ and $p_\beta(e)$ in $J$. The desired flat structure is defined by declaring\footnote{For a formal description on how to identify $R_e$ with $[0,\textbf{h}\circ p_\beta(e)] \times [0,\textbf{h}\circ p_\alpha(e)]$ we refer the reader to~\cite{DHV19}.} $R_e$ to be the rectangle $[0,\textbf{h}\circ p_\beta(e)] \times [0,\textbf{h}\circ p_\alpha(e)]$, see Figure~\ref{fig:MakingRectanglesEuclidean}.


\begin{figure}[!ht]
\begin{center}
\includegraphics{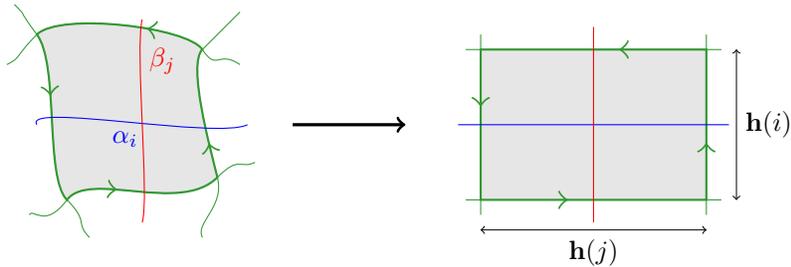}
\end{center}
\caption{Transforming topological rectangles into Euclidean rectangles.}
\label{fig:MakingRectanglesEuclidean}
\end{figure}

We denote the resulting flat surface $M(\alpha, \beta, \textbf{h})$. Remark that by contruction a vertex $v_D$ of the dual graph $(\alpha\cup\beta)^*$ of valence $k$ defines a conical singularity of angle $\frac{\pi k}{2}$ in the metric completion of $M(\alpha, \beta, \textbf{h})$. Given that $k$ is always an even number we have that $M(\alpha, \beta, \textbf{h})$ is a half-translation surface (\emph{i.e.} given by a quadratic differential) when $k=2(2n-1)$ for some $n\in\mathbb{Z}_{\geq 1}$.\\

Now, for every $i\in I$, the curve $\alpha_i$ is the core curve of the horizontal
cylinder $H_i := \cup_{e\in p_{\alpha}^{-1}(i)} R_e$. Because $\textbf{h}$ is $\lambda$-harmonic we have
$$
\sum_{e\in p_\alpha^{-1}(i)}\textbf{h}(p_{\beta}(e))=\sum_{j\sim i}\textbf{h}(j)=\lambda \textbf{h}(i).
$$
This equations say that the circumference $\sum_{e\in p_\alpha^{-1}(i)}\textbf{h}(p_{\beta}(e))$ of
$H_i$ is $\lambda$ times its height $\textbf{h}(i)$, hence the modulus of $H_i$
is equal to $\frac{1}{\lambda}$. The same computation with $\beta_j$ shows that the vertical cylinders
$V_j := \cup_{e\in p_{\beta}^{-1}(j)} R_e$ have core curve $\beta_j$
and modulus $\frac{1}{\lambda}$.\qed

\begin{remark}
As said before, Hooper-Thurston-Veech construction can be applied to more general pairs of multicurves $\alpha,\beta$. Consider for example the case in the Loch Ness monster depicted in Figure~\ref{Fig:MulticurvesInfiniteStaircase}: here the graph $\cG(\alpha\cup\beta)$ has finite valence but there exist four connected componets $\{C_i\}_{i=1}^4$ of $S\setminus\alpha\cup\beta$ which are infinite polygons, that is, whose boundary is formed by infinitely many segments belonging to curves in $\alpha$ and in $\beta$. In this situation the convention is to consider vertices in the dual graph $(\alpha\cup\beta)^*$ of infinite degree as points at infinity (that is, not in $S$). With this convention the Hooper-Thuston-Veech construction produces a translation surface structure on $S$, because each $\partial C_i$ is connected. In Figure~\ref{Fig:MulticurvesInfiniteStaircase} we illustrate the case of a $2$-harmonic function; the resulting flat surface is a translation surface known as the infinite staircase.
\\
\begin{figure}[!ht]
\begin{center}
\begin{minipage}{.45\textwidth}
\begin{center}
\includegraphics[scale=.5]{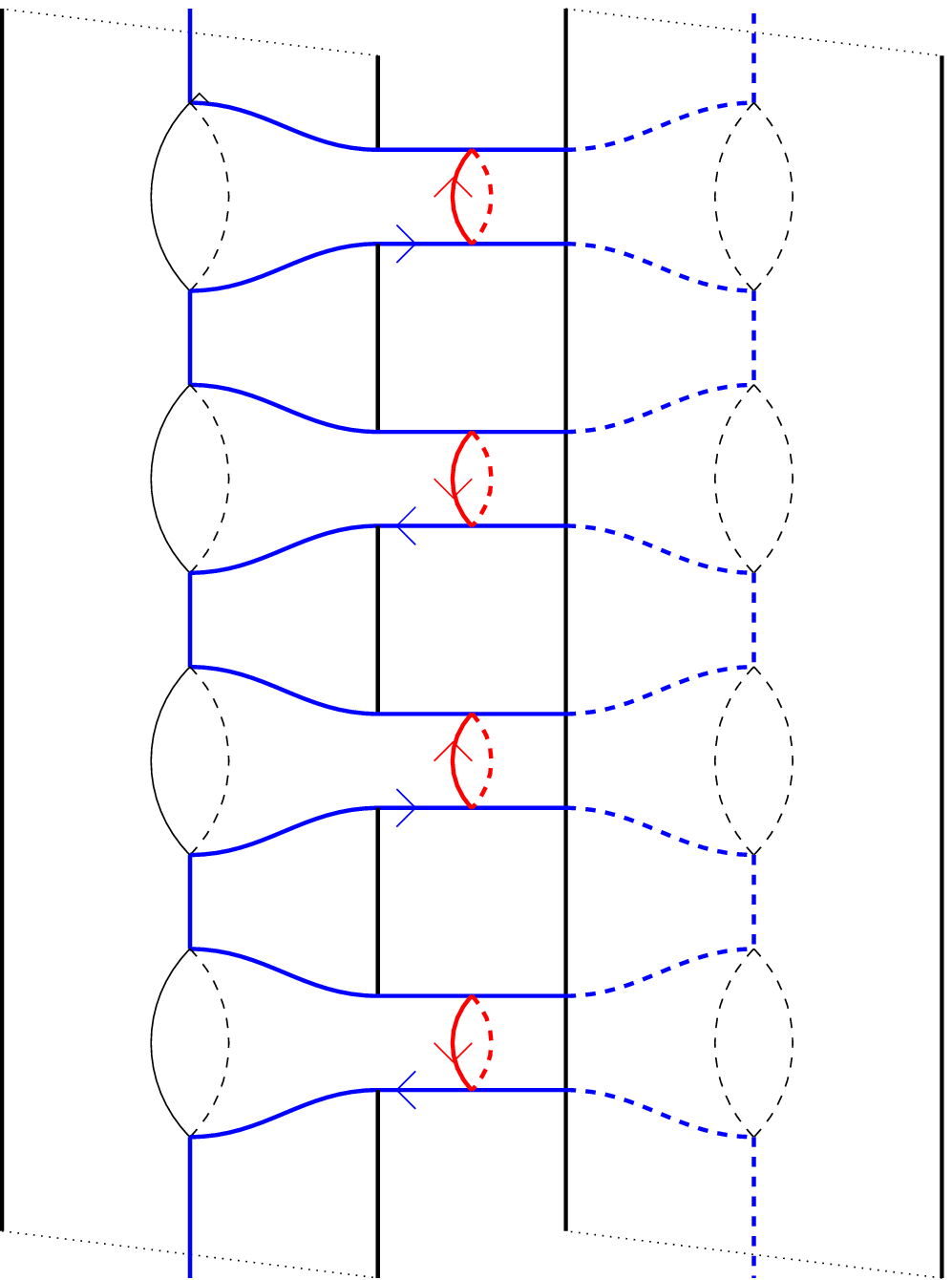}
\end{center}
\subcaption{Two oriented multicurves $\alpha$ (in blue) and $\beta$ (in red) in the Loch Ness monster for which the Hooper-Thurston-Veech construction produces the infinite staircase.}
\end{minipage}
\begin{minipage}{.45\textwidth}
\begin{center}
\includegraphics[scale=1]{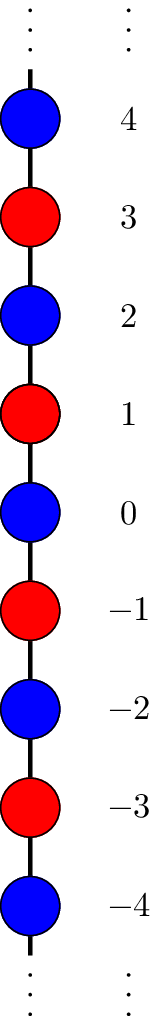}
\end{center}
\subcaption{The graph $\cG(\alpha\cup\beta)$ }
\end{minipage}
\begin{minipage}{.45\textwidth}
\begin{center}
\includegraphics[scale=1.2]{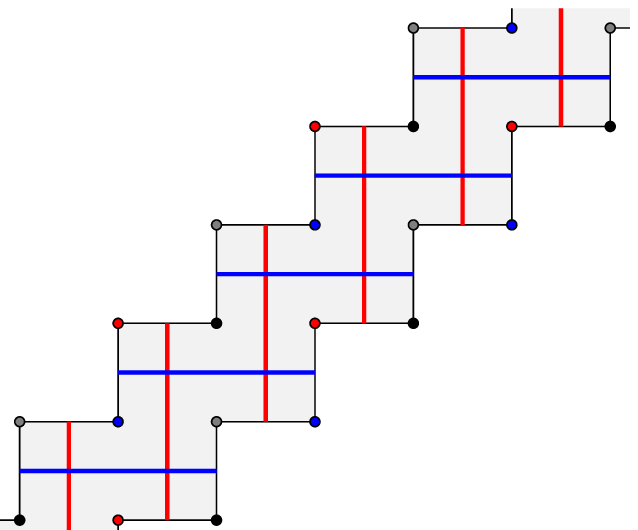}
\end{center}
\end{minipage}
\end{center}
\caption{The infinite staircase as a Hooper-Thurson-Veech surface.}
\label{Fig:MulticurvesInfiniteStaircase}
\end{figure}




\emph{Proof of Theorem~\ref{COR:FlatAmbivalentHomeos}}. Let $\lambda\geq 2$ and consider the subgroup of $\SL(2,\R)$:
\begin{equation}
\label{eq:GroupGLambda}
G_{\lambda}:=\langle\begin{psmallmatrix}1 & \lambda\\0 & 1\end{psmallmatrix},\begin{psmallmatrix}1 & 0\\ -\lambda & 1\end{psmallmatrix}\rangle
\end{equation}
This group is free and its elements are matrices of the form $\begin{psmallmatrix}1+k_{11}\lambda^2 & k_{12}\lambda\\ k_{21}\lambda & 1+k_{22}\lambda^2\end{psmallmatrix}$, $k_{ij}\in\Z$, such that the determinant is 1 and $|\frac{1+k_{11}\lambda^2}{k_{12}\lambda}|$ does not belong to the interval $(t^{-1},t)$, where $t=\frac{1}{2}(\lambda+\sqrt{\lambda^2-4})$, see~\cite{Brenner55}. On the other hand, since
$$
\{-\frac{\lambda}{2}<\Im(z)\leq\frac{\lambda}{2}\}\cap\{|z+\frac{1}{2\lambda}|>\frac{1}{2\lambda}\}\cap\{|z-\frac{1}{2\lambda}|\geq\frac{1}{2\lambda}\}\subset\mathbb{H}^2
$$
is a fundamental domain in the hyperbolic plane for $G_\lambda$, this group has no elliptic elements.  Morevoer, if $\lambda > 2$ there are only two conjugacy classes of parabolics (correspoding to the generators of $G_\lambda$) and if $\lambda=2$ then $\begin{psmallmatrix}1 & \lambda\\0 & 1\end{psmallmatrix}\begin{psmallmatrix}1 & 0\\ -\lambda & 1\end{psmallmatrix}=\begin{psmallmatrix}1-\lambda^2 & \lambda\\-\lambda & 1\end{psmallmatrix}$ and $\begin{psmallmatrix}1 & -\lambda\\0 & 1\end{psmallmatrix}\begin{psmallmatrix}1 & 0\\ \lambda & 1\end{psmallmatrix}=\begin{psmallmatrix}1-\lambda^2 & -\lambda\\ \lambda & 1\end{psmallmatrix}$ determine, together with the generators of $G_\lambda$, the only 4 conjugacy classes of parabolics in $G_\lambda$.  Remark that $\begin{psmallmatrix}1-\lambda^2 & \lambda\\-\lambda & 1\end{psmallmatrix}$ and $\begin{psmallmatrix}1-\lambda^2 & -\lambda\\ \lambda & 1\end{psmallmatrix}$ are hyperbolic if $\lambda>2$.

If $\alpha$ and $\beta$ are the multicurves depicted in Figure~\ref{Fig:MulticurvesInfiniteStaircase} (A) on the Loch Ness monster then $\cG(\alpha\cup\beta)$ is
the infinite bipartite graph on Figure~\ref{Fig:MulticurvesInfiniteStaircase} (B). Let us index the vertices of this graph by the integers as in the Figure so that $\textbf{h}_2(n)=1$, for all $\hspace{1mm}n\in\Z$, is a positive $2$-harmonic function on $\cG(\alpha\cup\beta)$. If $\lambda>2$ and  $r_+=\frac{\lambda+\sqrt{\lambda^2-4}}{2}$ the positive function $\textbf{h}_{\lambda}(n)=r_+^n,\hspace{1mm}n\in\Z$ is $\lambda$-harmonic on $\cG(\alpha\cup\beta)$. The desired family of translation surfaces $\{M_\lambda\}_{\lambda\in[2,+\infty)}$,  is obtained by applying Hooper-Thuston-Veech's construction to the multicurves $\alpha$, $\beta$ and the family of positive $\lambda$-harmonic functions $\{\textbf{h}_{\lambda}\}_{\lambda\in[2,+\infty)}$. The desired class $f\in\MCG(S)$ is given by the product of (right) multitwist $T_\alpha T_\beta$. No positive power of $f$ fixes an isotopy class of simple closed curve in $S$ because on one hand if $\lambda=2$ the translation flow on eigendirection of the parabolic matrix $D\tau_2^{-1}(f)$ decomposes $M_2$ in two strips and, on the other if $\lambda>2$, then $D\tau_\lambda^{-1}(f)$ is hyperbolic.
\qed

\begin{remark}
There are only 3 infinite graphs which admit $2$-harmonic functions, these are depicted in Figure~\ref{fig:2harmonicgraphs} together with their correspoding positive $2$-harmonic functions (which are unique up to rescaling). None of them comes from a pair of multicurves satifying (2) in Theorem~\ref{THM:HooperThurstonConstruction}.
\end{remark}

\begin{figure}
\begin{center}
\includegraphics[scale=0.9]{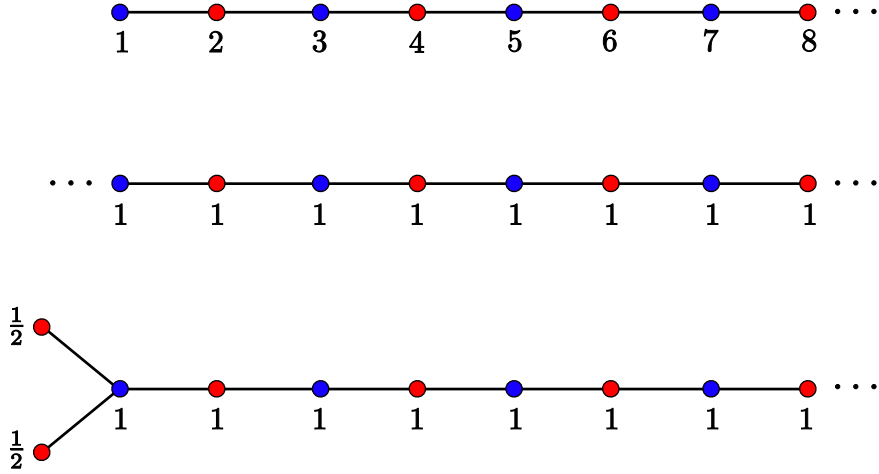}
\end{center}
\caption{Graphs with $2$-harmonic functions.}
\label{fig:2harmonicgraphs}
\end{figure}


\end{remark}


\textbf{Renormalizable directions}. The main results on Hooper's work~\cite{Hooper15} deal with the dynamical properties of the translation flow in \emph{renormalizable directions}.
\begin{definition}
    \label{DEF:RenormalizableDirection}
Consider the action of $G_\lambda$ as defined in (\ref{eq:GroupGLambda}) by homographies on the real projective line $\mathbb{RP}^1$. We say that a direction $\theta\in\R/2\pi\Z$ is $\lambda$-renormalizable if its projectivization lies in the limit set of $G_\lambda$ and is not an eigendirection of any matrix conjugated in $G_\lambda$ to a matrix of the form:
$$
\begin{pmatrix} 1 & \lambda \\ 0 & 1\end{pmatrix},\hspace{2mm}\begin{pmatrix} 1 & 0 \\ \lambda & 1\end{pmatrix} \hspace{2mm} or \hspace{2mm} \begin{pmatrix} 1 & 0 \\ -\lambda & 1\end{pmatrix}\cdot\begin{pmatrix} 1 & \lambda \\ 0 & 1\end{pmatrix}
$$
\end{definition}

We use two of Hooper's results in the proof of Theorem~\ref{THM:HooperThurstonConstruction}. Recall that in Hooper's work one takes as input an infinite bipartite graph and a positive $\lambda$-harmonic function on this graph to produce a translation surface.

\begin{theorem}[Theorem 6.2,~\cite{Hooper15}]
    \label{THM:NoSaddleConnections}
Let $M$ be a translation surface obtained from an infinite bipartite graph as in~[\emph{Ibid}.] using a positive $\lambda$-harmonic function and let $\theta$ be a $\lambda$-renormalizable direction. Then
the translation flow $F_\theta^t$ on $M$ does not have saddle connections.
\end{theorem}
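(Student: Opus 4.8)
The plan is to argue by contradiction via renormalization, using that $M$ carries affine automorphisms whose derivatives generate $G_\lambda$. By Lemma~\ref{lem:CylinderDecompImpliesParabolic} the horizontal and the vertical cylinder decompositions of $M$ produce multitwists $\varphi_h,\varphi_v\in\Aff^+(M)$ with $D\varphi_h=\begin{psmallmatrix}1 & \lambda\\0 & 1\end{psmallmatrix}$ and $D\varphi_v=\begin{psmallmatrix}1 & 0\\-\lambda & 1\end{psmallmatrix}$, so that $\langle\varphi_h,\varphi_v\rangle$ has image $G_\lambda$ in $\PSL(2,\R)$. Since any $g\in\Aff^+(M)$ carries a saddle connection in direction $\theta$ to one in direction $Dg\cdot\theta$ — where $\PSL(2,\R)$ acts on directions through their projectivizations — the set $\mathrm{SC}(M)\subseteq\mathbb{RP}^1$ of projective directions carrying a saddle connection is $G_\lambda$-invariant. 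The excluded directions in Definition~\ref{DEF:RenormalizableDirection} (the $G_\lambda$-orbit of the horizontal direction, of the vertical direction, and of the eigendirection of $\varphi_v\varphi_h$) do lie in $\mathrm{SC}(M)$: cylinder boundaries, and when $\lambda=2$ strip boundaries, and when $\lambda>2$ the invariant foliations of the affine pseudo-Anosov $\varphi_v\varphi_h$, all contain saddle connections. So the theorem is the assertion that the remaining points of $\Lambda(G_\lambda)$ — the $\lambda$-renormalizable directions — do not lie in $\mathrm{SC}(M)$.

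Next I would set up a renormalization scheme. Fix a fundamental sector $\mathcal I\subseteq\mathbb{RP}^1$ for the action of $G_\lambda$, and over $\mathcal I$ a combinatorial model of the first-return map of $F_\theta^t$ to a transversal assembled from the horizontal and vertical cylinders; the uniform bound on the valence of the bipartite graph makes this model have uniformly bounded local complexity. Applying the appropriate generator $\varphi_h^{\pm1}$ or $\varphi_v^{\pm1}$ brings $\theta$ back into $\mathcal I$ and simultaneously performs one step of a Rauzy--Zorich-type induction on the model; iterating yields an infinite itinerary $w_1,w_2,\dots\in\{\varphi_h^{\pm1},\varphi_v^{\pm1}\}$ attached to any direction whose projectivization lies in $\Lambda(G_\lambda)$. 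In this dictionary a limit direction is a parabolic fixed point or an eigendirection of $\varphi_v\varphi_h$ exactly when its itinerary is eventually periodic, so being $\lambda$-renormalizable amounts to the induction running forever with an itinerary that is not eventually periodic.

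The core of the proof is the dictionary between saddle connections and degenerations of the induction. Given a saddle connection $\gamma$ in a renormalizable direction $\theta$, after moving $\theta$ into $\mathcal I$ one follows the images $\gamma,\,w_1(\gamma),\,w_2w_1(\gamma),\dots$ along the induction. A saddle connection records a \emph{connection} of the model — two singular leaves of the foliation meeting — and one shows that a suitable non-negative integer invariant of $\gamma$, morally the number of induction steps separating the two singular points joined by $\gamma$, is non-increasing under each step and \emph{strictly} decreasing at every step whose letter interacts with the strand of $\gamma$. Since the itinerary is infinite and not eventually periodic, infinitely many steps interact with $\gamma$, so the invariant reaches $0$; but then $\gamma$ would be a saddle connection contained in a single cylinder and transverse to its core curve, which cannot happen. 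Hence $F_\theta^t$ has no saddle connection — this is essentially the argument carried out in Sections~5--6 of~\cite{Hooper15}.

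I expect the main obstacle to be the infinite-type features of $M$. Unlike in the classical Thurston--Veech setting there is no positive lower bound for the lengths of saddle connections (the cylinder heights $\textbf{h}(i)$ may tend to $0$), so the naive ``the systole shrinks along the Teichm\"uller ray'' contradiction is not available; one must instead work with the combinatorial, locally finite invariant above and prove that it is well defined and genuinely monotone. Equally delicate is keeping the renormalization model uniformly controlled along an entire infinite itinerary, which is exactly where the uniform valence bound is used, and where excluding the eigendirections of $\varphi_v\varphi_h$ — the only non-parabolic directions along which the itinerary would loop — is needed so that the descent never stabilizes.
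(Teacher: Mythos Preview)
The paper does not contain a proof of this statement: Theorem~\ref{THM:NoSaddleConnections} is simply quoted from~\cite{Hooper15} (Theorem~6.2 there) and used as a black box in the proof of Proposition~\ref{ref:DensitySeparatrices}. There is therefore no ``paper's own proof'' to compare your proposal against.

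That said, a few comments on your sketch as an account of Hooper's argument. The broad strategy---renormalize the direction by elements of $\langle\varphi_h,\varphi_v\rangle$ and track a saddle connection through the induction until a contradiction---is indeed the shape of the proof in~\cite{Hooper15}. But several of your intermediate claims are imprecise or not quite right. First, your assertion that the excluded directions ``do lie in $\mathrm{SC}(M)$'' is not the point: the horizontal and vertical directions need not carry saddle connections at all (the cylinder boundaries may be infinite concatenations of separatrices, not finite saddle connections), and for $\lambda>2$ the eigendirections of $\varphi_v\varphi_h$ certainly do not---those foliations are minimal in Hooper's setting. These directions are excluded because the renormalization itinerary becomes eventually constant (parabolic case) or eventually periodic (hyperbolic case), not because they visibly contain saddle connections. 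Second, the dictionary ``$\lambda$-renormalizable $\Longleftrightarrow$ itinerary not eventually periodic'' is too coarse: parabolic fixed points correspond to itineraries that are eventually constant in a single generator, while the excluded hyperbolic eigendirection corresponds to the specific period-two word $\varphi_v\varphi_h$; other eventually periodic itineraries give perfectly good renormalizable directions. Third, the ``non-negative integer invariant'' you invoke is left entirely unspecified, and the mechanism by which a saddle connection forces the induction to terminate in Hooper's paper is rather more concrete (it uses the shrinking sequence of nested intervals in $\mathbb{RP}^1$ and the fact that a saddle connection has a well-defined holonomy vector). As written, your descent argument is a placeholder rather than a proof.
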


\begin{theorem}[Theorem 6.4,~\cite{Hooper15}]
    \label{THM:ConservativeFlow}
Let $M$ be a translation surface obtained from an infinite bipartite graph as in~[\emph{Ibid}.] using a positive $\lambda$-harmonic function and let $\theta$ be a $\lambda$-renormalizable direction. Then
the translation flow $F_\theta^t$ is conservative, that is, given $A\subset M$ of positive measure and any $T>0$, for Lebesgue almost every $x\in M$ there is a $t>T$ such that $F_\theta^t(x)\in A$
\end{theorem}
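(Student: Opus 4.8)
The plan is to exploit the renormalizable (self-similar) structure that $\theta$ imposes on $M$ and to reduce conservativity of $F_\theta^t$ to a recurrence statement that can be read off the bipartite graph $\mathcal{G}(\alpha\cup\beta)$. First I would encode $\theta$ by a ``continued fraction expansion'' relative to $G_\lambda$: since the projectivization of $\theta$ lies in the limit set of $G_\lambda$ but is not one of the excluded fixed directions, there is an infinite reduced word $A_1A_2A_3\cdots$ in the generators $\begin{psmallmatrix}1 & \lambda\\0 & 1\end{psmallmatrix}^{\pm1}$, $\begin{psmallmatrix}1 & 0\\ -\lambda & 1\end{psmallmatrix}^{\pm1}$ whose partial products $\gamma_n:=A_1\cdots A_n$ satisfy $\gamma_n\cdot\infty\to[\theta]$ in $\mathbb{RP}^1$. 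Each $\gamma_n$ is the derivative of an affine automorphism $\Psi_n\in\Aff^+(M)$ — the same word in the affine multitwists $\phi_h,\phi_v$ of Lemma~\ref{lem:CylinderDecompImpliesParabolic} and their inverses — and $\Psi_n$ conjugates $F_\theta^t$ to the translation flow in the direction $\theta_n:=D\Psi_n^{-1}(\theta)$, which becomes alternately very close to horizontal and very close to vertical as $n$ grows.

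Next I would build a nested sequence of Rokhlin towers over a fixed cross-section. Take as base $\Gamma_0$ a horizontal transversal meeting each cylinder $H_i$ of the horizontal decomposition in one segment. Because $\theta_n$ is, after the change of coordinates $\Psi_n$, alternately near-horizontal and near-vertical, the first-return map of $F_\theta^t$ to $\Gamma_0$ is measurably conjugate via $\Psi_n$ to the return map to $\Gamma_0$ of a flow transverse to $\Gamma_0$ on $M$ itself; the renormalized induced systems are ``the same'' up to the linear maps $\gamma_n$. Theorem~\ref{THM:NoSaddleConnections} is used here twice: it guarantees that all these return maps are defined at Lebesgue-a.e.\ point, and that the towers genuinely refine, so that every point of $M\setminus\Sing(M)$ sits in the interior of some floor at every level and the $F_\theta^t$-saturation of $\Gamma_0$ is all of $M\setminus\Sing(M)$.

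The heart of the argument is to deduce conservativity from this self-similar picture. Two features make it work. The uniform bound on the valence of $\mathcal{G}(\alpha\cup\beta)$ keeps the combinatorics of the cylinder decompositions tame (uniformly bounded number of rectangles per cylinder), while the harmonic equation already forces $\mathbf{h}(w)/\mathbf{h}(v)\in[\lambda^{-1},\lambda]$ for adjacent $v,w$, so the renormalization has bounded distortion; consequently the combinatorial dynamics underlying the towers is conjugate to (a factor of) an adding-machine-type system on the graph, which is conservative. Moreover the ``height cocycle'' recording, along an orbit, the vertical displacement between consecutive visits to $\Gamma_0$ is a cocycle over this odometer whose recurrence is equivalent to recurrence of the nearest-neighbour random walk on $\mathcal{G}(\alpha\cup\beta)$ reweighted by $\mathbf{h}$ — and that walk is recurrent precisely because $\mathbf{h}$ is a \emph{positive} $\lambda$-harmonic function on a graph of bounded valence. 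Recurrence of the induced map on $\Gamma_0$ is the same as its conservativity, and since $\Gamma_0$ is a sweep-out set for $F_\theta^t$, conservativity of $F_\theta^t$ on all of $M$ follows by the standard ``return map conservative $\Rightarrow$ flow conservative'' principle.

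The step I expect to be the main obstacle is making ``self-similar'' precise in infinite measure: the return map to $\Gamma_0$ is an infinite interval exchange, the towers have infinitely many floors of unbounded height, and one must rule out that a positive-measure wandering set is produced in the limit of the renormalization. This requires a quantitative lower bound on the proportion of each tower that is re-used at the next renormalization level, and it is exactly here that the bounded-valence hypothesis and the positivity of $\mathbf{h}$ are indispensable; without them the associated random walk could be transient and the flow dissipative in some renormalizable directions.
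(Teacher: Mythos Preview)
The paper does not prove this statement: Theorem~\ref{THM:ConservativeFlow} is quoted verbatim from~\cite{Hooper15} (Theorem~6.4 there) and is used as a black box inside the proof of Proposition~\ref{ref:DensitySeparatrices}. So there is no ``paper's own proof'' to compare your proposal to; any assessment has to be against Hooper's original argument.

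That said, your sketch has a real gap at the point you yourself flag as ``the heart of the argument''. You assert that recurrence of the induced map reduces to recurrence of the nearest-neighbour random walk on $\mathcal{G}(\alpha\cup\beta)$ reweighted by $\mathbf{h}$, and that this walk is recurrent ``precisely because $\mathbf{h}$ is a positive $\lambda$-harmonic function on a graph of bounded valence''. That implication is false: bounded-valence graphs supporting positive $\lambda$-harmonic functions can be, and typically are, transient (the $d$-regular tree for $d\geq 3$ is the obvious example), and Hooper's theorem applies to such graphs as well. Conservativity of $F_\theta^t$ therefore cannot be deduced from recurrence of any natural random walk on the configuration graph; indeed Hooper's proof does not go through random-walk recurrence at all. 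His argument is that the renormalization operators (your $\Psi_n$) act on the measurable dynamics in a way that, at each step, pushes mass from any putative dissipative part back into a controlled region: one shows that the Hopf decomposition is invariant under renormalization and that the wandering set, if it had positive measure, would be forced by the shrinking/expanding dichotomy of the $\gamma_n$ to have either zero or infinite intersection with every rectangle at every scale, contradicting $\sigma$-finiteness. The bounded-valence hypothesis and positivity of $\mathbf{h}$ enter only to guarantee that the rectangle decompositions at successive levels are combinatorially comparable (bounded distortion), not to force any graph-theoretic recurrence.

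Your first two paragraphs (the $G_\lambda$-continued-fraction coding of $\theta$, the affine lifts $\Psi_n$, and the tower picture over a horizontal transversal) are on the right track and do match the set-up in~\cite{Hooper15}; it is the mechanism you invoke for conservativity that needs to be replaced.
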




\section{Proof of results}
    \label{Section:ProofResults}

\subsection{Proof of Theorem~\ref{THM:Loxodromics}} The proof is divided in two parts. In the first part we use the Hooper-Thurston-Veech construction (see Section \ref{Subsection:HTVConstruction}) to find two transverse measured $f$-invariant foliations $\mathcal{F}^u$ and $\mathcal{F}^s$ on $S$ for which $p$ is a singular point and for which each foliation has $m$ separatrices based at $p$. We prove that each separatrix based at $p$ is dense in $S$. Then, we consider a hyperbolic metric on $S$ of the first kind (allowing us to talk about the completed ray graph $R(S;p)$). We stretch each separatrix of $\mathcal{F}^u$ and $\mathcal{F}^s$ based at $p$ to a geodesic with respect to this metric. This defines two sets
 $\Gamma^+$ and $\Gamma^-$ of geodesics, each having cardinality $m$. In the second part of the proof, we show that $\Gamma^+$ and $\Gamma^-$ are the only cliques of high-filling rays fixed by $f$ in the Gromov boundary of the loop graph.

\emph{Flat structures}. We use the Hooper-Thurston-Veech construction (Section~\ref{Subsection:HTVConstruction}) for this part of the proof. Let $\alpha$ and $\beta$ be two multicurves satisfying the hypothesis of Theorem~\ref{THM:Loxodromics}. Fix $\mathbf{h}:\mathcal{G}(\alpha\cup\beta)\to\mathbb{R}_{>0}$ a positive $\lambda$-harmonic function on the configuration graph $\mathcal{G}(\alpha\cup\beta)$. Let $M=M(\alpha,\beta,\mathbf{h})$ be the flat structure on $S$ given by the Hooper-Thurston-Veech construction and
$$
\rho:\langle T_\alpha,T_\beta\rangle\to\PSL(2,\mathbb{R})
$$
the corresponding presentation. Here, we have chosen $p$ as one of the vertices of the dual graph $(\alpha\cup\beta)^*$ (see the proof of Theorem~\ref{THM:HooperThurstonConstruction}) and therefore it makes sense to consider the classes that the affine multitwists $T_\alpha$, $T_\beta$ define in $\MCG(S;p)$. We abuse notation and denote also by $T_\alpha$, $T_\beta$ these classes.

The eigenspaces of the hyperbolic matrix $\rho(f)$ define two transverse ($f$-invariant) measured foliations $(\mathcal{F}^u,\mu_u)$ and $(\mathcal{F}^s,\mu_s)$ (for unstable and stable, respectively) on $M$ . Moreover, we have that $f\cdot(\mathcal{F}^u,\mu_u)=(\mathcal{F}^u,\eta\mu_u)$ and $f\cdot(\mathcal{F}^s,\mu_s)=(\mathcal{F}^s,\eta^{-1}\mu_u)$, where $\eta>1$ is (up to sign) an eingenvalue of $\rho(f)$.
For simplicity we abbreviate the notation for these foliations by $\mathcal{F}^u$ and $\mathcal{F}^s$.


\emph{The set $\mathfrak{V}$}. Recall that $M=M(\alpha,\beta,\textbf{h})$ is constructed by glueing a family of rectangles $\{R_e\}_{e\in E}$, where $E=E(\mathcal{G}(\alpha\cup\beta))$ is the set of edges of the configuration graph $\mathcal{G}(\alpha\cup\beta)$, along their edges using translations and half-translations. By the way the Hooper-Thurston-Veech construction is carried out, sometimes the corners of these rectangles are not part of the surface $S$: this is the case when there are connected components of $S\setminus\alpha\cup\beta$ which are punctured discs. However, every corner of a rectangle $\{R_e\}_{e\in E}$ belongs to the metric completion $\widehat{M}$ of $M$ (w.r.t. the natural flat metric). We define $\mathfrak{V}\subset\widehat{M}$ to be the set of points that are corners of rectangles in $\{R_e\}_{e\in E}$ (after glueings). Remark that since
all connected components of $S\setminus \alpha\cup\beta$ are (topological) polygons with an uniformly bounded number of sides, points in $\mathfrak{V}$ are regular points or conical singularities of $\widehat{M}$ whose total angle is uniformly bounded. Moreover the set $\Fix(f)$ of fixed points of the continuous extension of $f$ to $\widehat{M}$ contains $\mathfrak{V}$. Indeed, if $\cH=\{H_i\}$ and $\cV=\{V_j\}$ denote the horizontal and vertical (maximal) cylinder decompositions of $M$, then $\mathfrak{V}=\cup_{i\neq j} (\partial H_i\cap \partial V_j)$, where the boundary of each cylinder is taken in the metric completion $\widehat{M}$. The claim follows from the fact that for every $i\in I$ and $j\in J$, $T_\alpha$ fixes $\partial H_i$ and $T_\beta$ fixes $\partial V_j$.

For each $q\in \mathfrak{V}$ we denote by $\Sep_q(*)$ the set of leaves of $*\in\{\mathcal{F}^u,\mathcal{F}^s\}$ based at $q$.
We call such a leaf a \emph{separatrix based at $q$}. Remark that if the total angle of the flat structure $M$ around $q$ is $k\pi$ then $|\Sep_q(\mathcal{F}^u)|=|\Sep_q(\mathcal{F}^s)|=k$. The following fact is essential for the second part of the proof.


\begin{proposition}
	\label{ref:DensitySeparatrices}
Let $q\in \mathfrak{V}$. Then any separatrix in $\Sep_q(\mathcal{F}^u)\cup\Sep_q(\mathcal{F}^s)$ is dense in $M$.
\end{proposition}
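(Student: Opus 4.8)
The plan is to exploit the structure of $M$ as a Hooper–Thurston–Veech surface together with Hooper's results (Theorems~\ref{THM:NoSaddleConnections} and~\ref{THM:ConservativeFlow}) applied to the orientation double cover. First I would reduce to the case of a translation surface: if $M$ is not already a translation surface, pass to the orientation double cover $\pi:\widetilde{M}\to M$. This is again a Hooper–Thurston–Veech translation surface (the rectangle decomposition lifts, and the harmonic function lifts to a $\lambda$-harmonic function on the doubled configuration graph), the foliations $\mathcal{F}^u,\mathcal{F}^s$ lift to the corresponding foliations on $\widetilde{M}$, and a separatrix based at $q\in\mathfrak{V}$ is dense in $M$ as soon as one of its lifts is dense in $\widetilde{M}$. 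So it suffices to prove the statement for translation surfaces $M$ coming from the construction. By applying a rotation interchanging horizontal and vertical we may also assume we are looking at a separatrix of $\mathcal{F}^u$.

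Next I would identify the direction $\theta$ of $\mathcal{F}^u$ as a $\lambda$-renormalizable direction in the sense of Definition~\ref{DEF:RenormalizableDirection}. Indeed $\mathcal{F}^u$ is the unstable foliation of the hyperbolic element $\rho(f)\in G_\lambda$, so its direction is the attracting fixed point on $\mathbb{RP}^1$ of a hyperbolic element of $G_\lambda$; in particular it lies in the limit set of $G_\lambda$. Because $f$ is hyperbolic, $\theta$ is not the eigendirection of any parabolic, and since $\rho(f)$ is a genuinely hyperbolic element (the two generators and the one extra product in Definition~\ref{DEF:RenormalizableDirection} are, respectively, parabolic or — in the relevant cases — not conjugate to $\rho(f)$), the direction $\theta$ avoids the excluded eigendirections. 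Hence Theorem~\ref{THM:NoSaddleConnections} gives that the translation flow $F^t_\theta$ has no saddle connections, and Theorem~\ref{THM:ConservativeFlow} gives that $F^t_\theta$ is conservative.

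Now I would run the standard argument that a conservative, saddle-connection-free translation flow has all separatrices dense. Fix $q\in\mathfrak{V}$ and a separatrix $\ell\in\Sep_q(\mathcal{F}^u)$; since there are no saddle connections, $\ell$ is a complete forward trajectory $\{F^t_\theta(x_0):t\ge 0\}$ for a point $x_0$ close to $q$ on $\ell$. Let $U$ be any nonempty open set; I want to show $\ell\cap U\neq\emptyset$. Pick a small flow box $A\subset U$ transverse to the direction $\theta$, of positive measure. By conservativity, for almost every $x\in A$ the forward orbit of $x$ returns to $A$ infinitely often; fix one such recurrent $x\in A$. The closure of the forward orbit of $x$ is then a closed flow-invariant set; by a Poincaré-recurrence / minimality-type argument (no saddle connections means every leaf through a recurrent point is bi-recurrent and its closure is "uniformly recurrent"), one shows that the forward orbit of $q$'s separatrix accumulates onto the forward orbit of $x$, hence meets every transversal that the orbit of $x$ meets, in particular $A\subset U$. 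More carefully: since $F^t_\theta$ has no saddle connections, the first-return map to a transversal through $A$ is a well-defined interval exchange–type map (an infinite IET), and conservativity forces its forward orbits to be dense in each "minimal component"; because $\alpha\cup\beta$ fills $S$ the surface has a single such component, so every infinite forward orbit — in particular $\ell$ — is dense. Running the same argument for $\mathcal{F}^s$ (its direction is the attracting fixed point of $\rho(f)^{-1}\in G_\lambda$, again $\lambda$-renormalizable) finishes the proof.

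The main obstacle I anticipate is the last step — promoting "conservative and no saddle connections" to "every separatrix is dense" — because conservativity is only an almost-everywhere statement while density of a specific separatrix $\ell$ is a statement about one measure-zero orbit. The resolution is that absence of saddle connections is exactly what lets one transfer the generic recurrence to the specific leaf: one shows the $\omega$-limit set of $\ell$ is a closed, flow-invariant, nonempty set containing a recurrent point $x$ (obtained from conservativity), and then that the closure of the orbit of any recurrent point is all of $M$ — this uses that $M$ has no proper closed invariant subsurface with boundary made of saddle connections, which in turn follows from the filling hypothesis on $\alpha\cup\beta$ and from Theorem~\ref{THM:NoSaddleConnections}. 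I would also need to check the bookkeeping that the lift of $\mathfrak{V}$ to $\widetilde{M}$ lands in the analogous vertex set there and that the excluded renormalizable-direction eigendirections are genuinely avoided by $\rho(f)$; both are elementary but deserve an explicit line.
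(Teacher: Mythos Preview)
Your reduction to translation surfaces via the orientation double cover, and the identification of the unstable direction as $\lambda$-renormalizable so that Theorems~\ref{THM:NoSaddleConnections} and~\ref{THM:ConservativeFlow} apply, are correct and match the paper. The paper uses exactly these two inputs to prove that the \emph{union} $\cup_{q\in\mathfrak{V}}\Sep_q(\mathcal{F}^u)$ of all separatrices is dense (arguing that a complementary open set would force a cylinder or an infinite strip, contradicting Hooper's results).

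The gap is in your last step, promoting ``conservative and no saddle connections'' to density of a \emph{single} separatrix $\ell$. On a compact surface this works because $\omega(\ell)$ is automatically nonempty and any proper closed invariant set must be bounded by saddle connections. On an infinite-type surface both of these fail: $\ell$ could in principle escape every compact set, so $\omega(\ell)=\emptyset$ is not ruled out a priori; and closed invariant sets need not be subsurfaces bounded by saddle connections, so your ``single minimal component'' argument has no structure theorem to rest on. Conservativity is an almost-everywhere statement and does not by itself place any recurrent point inside $\omega(\ell)$---you would first need to know $\ell$ accumulates somewhere, which is precisely what is in question.

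The paper closes this gap by a completely different mechanism that uses the affine automorphism $f$ itself rather than abstract flow dynamics. Inside each horizontal cylinder $H_k$, the unstable separatrix $\gamma^u_{q,H_k}$ based at a vertex $q\in\partial H_k$ necessarily crosses the stable separatrices $\gamma^s_{\xi,H_k}$ based at the neighbouring vertices $\xi\in\mathfrak V\cap\partial H_k$ (this is forced by the positivity of $Df$ and the location of the eigenlines). Since $f$ fixes the vertices in $\mathfrak V$ and contracts along $\mathcal F^s$, iterating $f$ on these intersection points shows $\xi\in\omega(\gamma^u_{q,H_k})$, hence $\gamma^u_{\xi,H_k}\subset\omega(\gamma^u_{q,H_k})$. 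One then propagates this inclusion cylinder-by-cylinder through the configuration graph $\mathcal G(\alpha\cup\beta)$ (connected because $\alpha\cup\beta$ fills) to conclude that $\omega(\gamma^u_{q,H_k})$ contains every separatrix, and then the first claim (union of separatrices is dense) finishes the proof. This use of $f$ to exhibit concrete limit points is the missing idea in your proposal.
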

\begin{proof}
We consider first the case when $M$ is a translation surface. At the end of the proof we deal with the case when $M$ is a half-translation surface.

We show that any separatrix in $\Sep_q(\mathcal{F}^u)$ is dense. The arguments for separatrices in $\Sep_q(\mathcal{F}^s)$ are analogous.

\emph{Claim}: $\cup_{q\in\mathfrak{V}}\Sep_q(\mathcal{F}^u)$, the union of all separatrices of $\mathcal{F}^u$, is dense in $M$. To prove this claim we strongly use the work of Hooper ~\cite{Hooper15}\footnote{Hooper only deals with the case when $M$ is a translation surface. This is why when $M$ is a half-translation surface we consider its orientation double cover.}. In particular, we use the fact that leaves in $\mathcal{F}^u$ are parallel to a renormalizable direction, see Definition~\ref{DEF:RenormalizableDirection}. We proceed by contradiction by assuming that the complement of the closure of $\cup_{q\in V}\Sep_q(\mathcal{F}^u)$ in $\widehat{M}$ is non-empty. Let $U$ be a connected component of this complement. Then $U$ is $\mathcal{F}^u$-invariant. If $U$ contains a closed leaf of $\mathcal{F}^u$ then it has to be a cylinder, but this cannot happen because there are no saddle connections parallel to renormalizable directions, see Theorem 6.2 in~\cite{Hooper15}. Then $U$ contains a transversal to $\mathcal{F}^u$ to which leaves never return. In other words, $U$ contains an infinite strip, \emph{i.e.} a set which (up to rotation) is isometric to $(a,b)\times\mathbb{R}$ for some $a<b$. This is impossible since the translation flow on $M$ in a renormalizable direction is conservative\footnote{The translation flow $F_\theta^t$ is called conservative if given $A\subset M$ of positive measure and any $T>0$, for Lebesgue almost every $x\in M$ there is a $t>T$ such that $F_\theta^t(x)\in A$.}, see Theorem 6.4 in~\cite{Hooper15}. The claim follows.

We strongly recommend that the reader uses Figure~\ref{fig:Separatricesdensas} as a guide for the next paragraphs.

\begin{figure}[!ht]
\begin{center}
\includegraphics[scale=0.8]{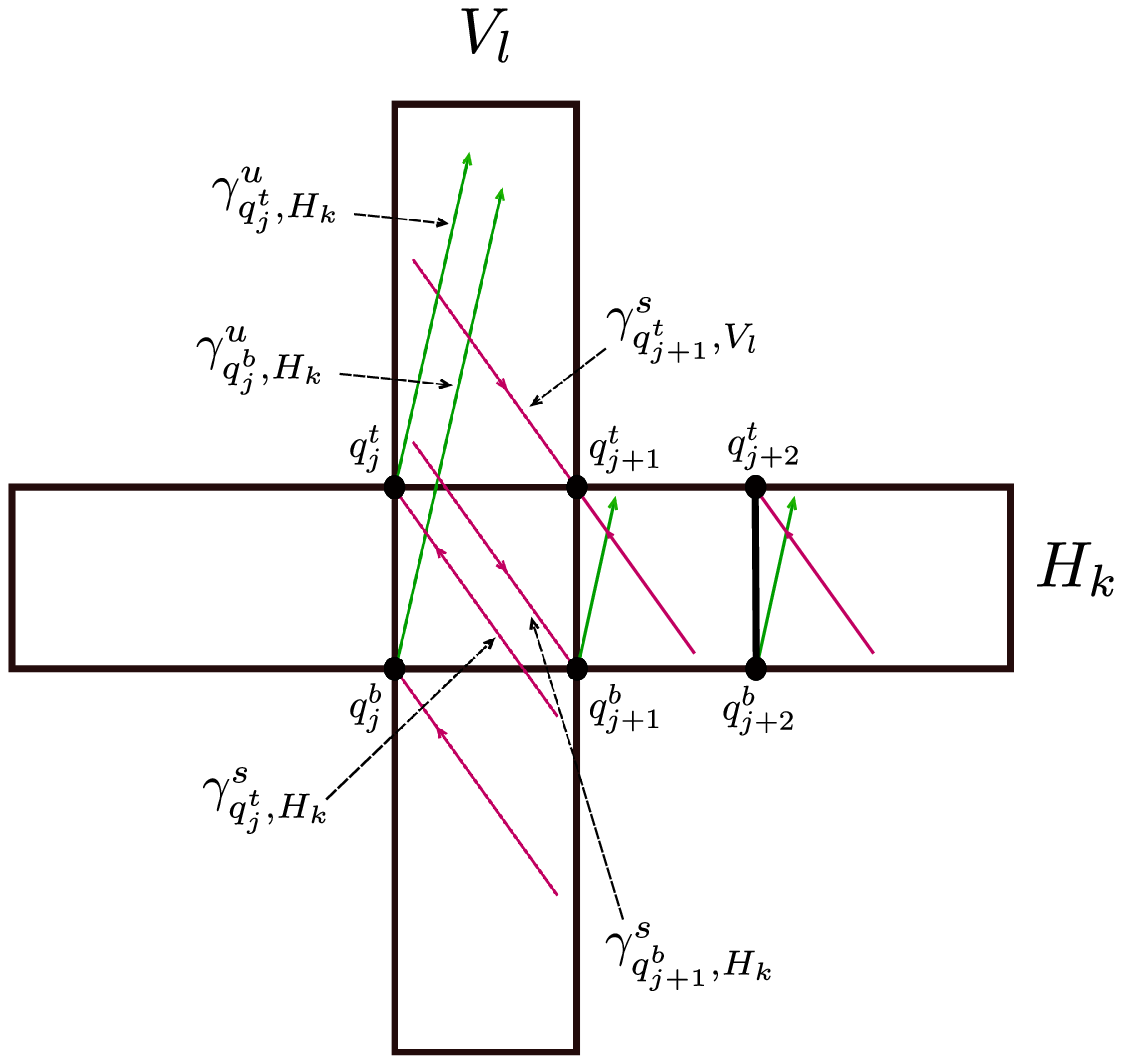}
\end{center}
\caption{}
\label{fig:Separatricesdensas}
\end{figure}

Henceforth if $\gamma$ is a separatrix of $\mathcal{F}^u$, we denote by $\gamma(t)$, $t>0$ the parametrization for which $\lim_{t\to 0}\gamma(t)\in \mathfrak{V} $ and such that $|\gamma'|=1$ (w.r.t. to the flat metric on $M$).

For each horizontal cylinder $ H_k$ in $M$ and $\xi\in \mathfrak{V}\cap \partial H_k$ we denote by $\gamma^u_{\xi, H_k}\subset M$ (respectively $\gamma^s_{\xi, H_k}$) the unique separatrix of
$\mathcal{F}^u$ (respect. of $\mathcal{F}^s$) based at $\xi$ within $ H_k$, that is, for which $\gamma^u_{\xi, H_k}(t)\in H_k$ for all $t$ in a small neighbourhood of $0$.
For a vertical cylinder $ V _l$, $\gamma^u_{\xi, V _l}$ and $\gamma^s_{\xi, V _l}$ are defined in a similar way. Let $\mathfrak{V}^b( H_k)$ and $\mathfrak{V}^t( H_k)$ denote the points in $ \mathfrak{V} \cap H_k$ in the bottom
and in the top\footnote{We pull back the standard orientation of the Euclidean plane to $M$ to make sense of the east-west and bottom-top sides of a cylinder.} connected component of $\partial H_k$ respectively; and for any vertical cylinder $ V _l$ let $\mathfrak{V}^e( V _l)$ and $\mathfrak{V}^w( V _l)$ denote the points in $ \mathfrak{V} \cap V _l$ in the east and west connected component of $\partial V _l$ respectively.

Without loss of generality we suppose that $q\in V^b( H_k)\cap V^e( V _l)$. We denote by $\omega(\gamma_{q, H_k}^u)$ the $\omega$-limit set of $\gamma_{q, H_k}^u$.

\emph{Claim}: the union of all separatrices of $\mathcal{F}^u$ based at points in $\partial H_k\cup\partial V_l$ within $H_k$ and $V_l$ respectively
\begin{equation}
  \label{EQ:HorizontalSeparatrices}
\left(\bigcup_{\xi\in \mathfrak{V}\cap\partial H_k}\gamma^u_{\xi, H_k}\right)\cup\left(\bigcup_{\xi\in \mathfrak{V}\cap\partial V_l}\gamma^u_{\xi, V_l}\right)
\end{equation}
is contained in $\omega(\gamma_{q, H_k}^u)$.

\emph{Proof of claim}. Remark that since $H_k$ is tiled by rectangles corresponding to points of intersection of the core curve $\alpha_k$ with curves in $\beta$, $|\mathfrak{V}^b( H_k)|=|\mathfrak{V}^t( H_k)|$ and for each $\xi\in \mathfrak{V}^b( H_k)$ there is exactly one point in $\mathfrak{V}^t( H_k)$ just above. Hence, using the east-west orientation of $ H_k$, we can order the elements of $\mathfrak{V}^b( H_k)\cup \mathfrak{V}^t( H_k)$ cyclically: we write $\mathfrak{V}^b( H_k)=\{q_j^b\}_{j\in\mathbb{Z}/N\mathbb{Z}}$, $\mathfrak{V}^t( H_k)=\{q_j^t\}_{j\in\mathbb{Z}/N\mathbb{Z}}$ for some $N\geq 1$. The sets $\mathfrak{V}^e( V_l)=\{q_j^e\}_{j\in\mathbb{Z}/M\mathbb{Z}}$, $\mathfrak{V}^w(V_l)=\{q_j^w\}_{j\in\mathbb{Z}/M\mathbb{Z}}$, for some $M\geq 1$, are defined in a similar way.

We suppose that the labeling is such that above $q_j^b$ lies $q_j^t$ for all $j\in\mathbb{Z}/N\mathbb{Z}$, and that $q=q_0^b=q_0^e$. Recall that $DT_\alpha=\begin{psmallmatrix} 1 & \lambda \\ 0 & 1\end{psmallmatrix}$, $DT_\beta^{-1}=\begin{psmallmatrix} 1 & 0 \\ \lambda & 1\end{psmallmatrix}$, hence $D f=\begin{psmallmatrix} a & b \\ c & d\end{psmallmatrix}$, with $a,b,c,d\in\mathbb{R}_{>0}$.
In particular $D f$ sends the positive quadrant $\mathbb{R}_{x\geq 0,y\geq 0}$ into itself. If we suppose, without loss of generality, that the unstable eingenspace of $D f$ (without its zero) lies in the interior of $\mathbb{R}_{x\geq 0,y\geq 0}\cup\mathbb{R}_{x\leq 0,y\leq 0}$, then the stable eigenspace of $D f$ (without its zero) has to lie in the interior of $\mathbb{R}_{x\geq 0,y\leq 0}\cup\mathbb{R}_{x\leq 0,y\geq 0}$.
Hence, for every $j\in\mathbb{Z}/N\mathbb{Z}$ we have that $\gamma^u_{q_j^b,H_k}$ intersects $\gamma^s_{\xi,H_k}$, for every $\xi\in\{q_j^t,q_{j+1}^b\}$ and $\gamma^s_{q_{j+1}^t,V_{l'}}$, where $V_{l'}$ is the vertical cylinder intersecting $H_k$ and having $\{q_j^b,q_j^t,q_{j+1}^b,q_{j+1}^t\}$ in its boundary\footnote{Remark that in $M$ these points need not to be all different from each other. For example $q_j^b=q_j^t$ and $q_{j+1}^b=q_{j+1}^t$ if the core curve of $V_{l'}$ only intersects the core curve of $H_k$. In any case the claims remain valid.}. From Figure~\ref{fig:Separatricesdensas} we can see that some of these points of intersection are actually in $H_k\cup V_{l'}$.
By applying repeatedly $f$ to all these points of intersection of separatrices we obtain that $\xi\in\omega(\gamma_{q_j^b}^u,H_k)$ for every $j\in\mathbb{Z}/N\mathbb{Z}$ and $\xi\in\{q_{j+1}^b,q_j^t,q_{j+1}^t\}$. This implies that $\gamma^u_{\xi,H_k}\subset\omega(\gamma_{q_j^b,H_k}^u)$ for every $j\in\mathbb{Z}/N\mathbb{Z}$ and $\xi\in\{q_{j+1}^b,q_j^t,q_{j+1}^t\}$.
In particular, we get that $\omega(\gamma_{q=q_0^b,H_k}^u)$ contains $\gamma^u_{\xi,H_k}$ for every  $\xi\in\{q_1^b,q_1^t,q_0^t\}$. As a consequence, we have that $\omega(\gamma_{q,H_k}^u)$ contains\footnote{Here we are using the following general principle: if $\gamma_1,\gamma_2$ are trayectories of a vector field on a translation surface and $\gamma_1$ is contained in $\omega(\gamma_2)$, then $\omega(\gamma_1)\subset\omega(\gamma_2)$.
} $\omega(\gamma^u_{q_1^b,H_k})$, which in turn  contains $\{\gamma^u_{q_2^b,H_k},\gamma^u_{q_1^t,H_k},\gamma^u_{q_2^t,H_k}\}$. Proceeding inductively we get that $\omega(\gamma_{q,H_k}^u)$ contains $$
\bigcup_{\xi\in \mathfrak{V}\cap\partial H_k}\gamma^u_{\xi, H_k}.
$$

The positivity of the matrix $Df$ and the fact that its unstable eigenspace lies in  $\mathbb{R}_{x\geq 0,y\geq 0}\cup\mathbb{R}_{x\leq 0,y\leq 0}$ also imply that for every $j\in\mathbb{Z}/M\mathbb{Z}$ the separatrix $\gamma_{q_j^e,V_l}^u$ intersects $\gamma_{\xi,V_l}^s$ for $\xi\in\{q_j^w,q_{j+1}^e,q_{j+1}^w\}$. From here on, the logic to show that $\omega(\gamma_{q,H_k}^u)$ contains $\bigcup_{\xi\in \mathfrak{V}\cap\partial V_l}\gamma^u_{\xi, V_l}$ is the same as the one presented in the preeceding paragraph and the claim follows.

The arguments in the proof of the preceding claim are local so they can be used to show that:
\begin{itemize}
\item For every $j\in\mathbb{Z}/N\mathbb{Z}$, the limit set $\omega(\gamma_{q_j^b,H_k}^u)$ contains all separatrices:
$$
\left(\bigcup_{\xi\in \mathfrak{V}\cap\partial H_k}\gamma^u_{\xi, H_k}\right)\cup\left(\bigcup_{\xi\in \mathfrak{V}\cap\partial V_{l'}}\gamma^u_{\xi, V_{l'}}\right)
$$
where $V_{l'}$ is such that $q_j^b=\partial H_k\cap\partial V_{l'}$.
\item For every $j\in\mathbb{Z}/M\mathbb{Z}$, the limit set $\omega(\gamma_{q_j^e,H_k}^u)$ contains all separatrices:
$$
\left(\bigcup_{\xi\in \mathfrak{V}\cap\partial V_l}\gamma^u_{\xi, V_l}\right)\cup\left(\bigcup_{\xi\in \mathfrak{V}\cap\partial H_{k'}}\gamma^u_{\xi, H_{k'}}\right)
$$
where $H_{k'}$ is a horizontal cylinder such that $q_j^e=\partial V_l\cap\partial H_{k'}$.
\end{itemize}
If we now denote by $\alpha_k\in\alpha$ the core curve of $H_k$ then the preceding discussion can be summarized as follows:
$\omega(\gamma_{q,H_k}^u)$ contains all separatrices of $\mathcal{F}^u$ based at points in the boundary of cylinders (and stemming within those cylinders) whose core curves belong to the link of $\alpha_k$ in the configuration graph $\mathcal{G}(\alpha\cup\beta)$; moreover, if $\beta_{l}\in{\rm link}(\alpha_k)$ then $\omega(\gamma_{q,H_k}^u)$ contains all separatrices of $\mathcal{F}^u$ based at points in the boundary of cylinders (and stemming within those cylinders) whose core curves belong to ${\rm link}(\beta_{l})$. This way we can extend the arguments above to the whole configuration graph $\mathcal{G}(\alpha\cup\beta)$ to conclude that $\omega(\gamma_{q,H_k}^u)$ contains $\cup_{q\in \mathfrak{V}}\Sep_q(\mathcal{F}^u)$. Since the later is dense in $M$ we conclude that $\gamma_{q,H_k}^u$ is dense in $M$.

We now suppose that $M$ is a half-translation surface (\emph{i.e.} given by a quadratic differential). Let $\pi:\widetilde{M}\to M$ the orientation double cover of $M$.

We claim that for every horizontal cylinder $H_i$ in $M$ the lift  $\pi^{-1}(H_i)$ is formed by two disjoint isometric copies $\widetilde{H}_{i_1}$, $\widetilde{H}_{i_2}$ of $H_i$ and these are maximal horizontal cylinders in $\widetilde{M}$. Recall that if $p\in\mathfrak{V}$ is a conic singularity of angle $n\pi$, then $\pi^{-1}(p)$ is formed by two conical singularities of angle $n\pi$ if $n$ is even, whereas if $n$ is odd $\pi^{-1}(p)$ is a conical singularity of angle $2n\pi$. Given that the multicurves $\alpha$ and $\beta$ are in minimal position, points in $\mathfrak{V}\cap\partial H_i$ which are conical singularities of angle $\pi$ are actually punctures of $S$.
This implies that $\widetilde{H}_{i_1}\cup\widetilde{H}_{i_2}$ cannot be merged within $\widetilde{M}$ into a flat cylinder. The same conclusion holds when $\mathfrak{V}\cap\partial H_i$ has conical singularities of angle different from $\pi$ and the claim follows. Analogously, we have that for every vertical  cylinder $V_j$ in $M$ the lift  $\pi^{-1}(V_j)$ is formed by two disjoint isometric copies $ \widetilde{V} _{j_1}$, $\widetilde{V} _{j_2}$  of $V_j$ and these are maximal vertical cylinders in $\widetilde{M}$.
The families $\widetilde{\mathcal{H}}=\{\widetilde{\mathcal{H}}_{i_1},\widetilde{\mathcal{H}}_{i_2}\}$ and $\widetilde{\mathcal{V}}=\{ \widetilde{\mathcal{V}}_{j_1}, \widetilde{\cV}_{j_2}\}$ define horizontal and vertical (maximal) cylinder decompositions of $\widetilde{M}$ respectively.

Let $\tilde{\alpha}$, $\tilde{\beta}$ denote the lifts to the orientation double cover of $\alpha$ and $\beta$ respectively. Given that the moduli of cylinders downstairs and upstairs is the same, we have a pair of affine multitwists $\widetilde{T_{\tilde{\alpha}}},\widetilde{T_{\tilde{\beta}}}\in\Aff(\widetilde{M})$ with $DT_\alpha=D\widetilde{T_{\tilde{\alpha}}}$ and $DT_\beta=D\widetilde{T_{\tilde{\beta}}}$ in $\PSL(2,\mathbb{R})$.
If we rewrite the word defining $f$ replacing each appearence of $T_\alpha$ with $\widetilde{T_{\tilde{\alpha}}}$ and each appearence of $T_\beta^{-1}$ with $\widetilde{T_{\tilde{\beta}}}^{-1}$ the result is an affine multitwist $\tilde{f}$ on $\widetilde{M}$ with $D \tilde{f}=Df$ in $\PSL(2,\mathbb{R})$. The eigendirections of $ \tilde{f}$
define a pair of transverse $\tilde{f}$-invariant measured foliations $\widetilde{\mathcal{F}}_u$ and $\widetilde{\mathcal{F}}_s$. Moreover, we have that $\widetilde{\mathcal{F}}_u=\pi^{-1}(\mathcal{F}_u)$ and $\widetilde{\mathcal{F}}_s=\pi^{-1}(\mathcal{F}_s)$ (\emph{i.e.} the projection $\pi$ sends leaves to leaves). Let
$\hat{\pi}:\widehat{\widetilde{M}}\to\widehat{M}$ be the continuous extension of the projection $\pi$ to the metric completions of $M$ and $\widetilde{M}$ and define $ \tilde{\mathfrak{V}} :=\hat{\pi}^{-1}(\mathfrak{V})$. Remark that $ \tilde{\mathfrak{V}} =\cup (\partial\widetilde{H_i}\cap\partial\widetilde{V_j})$, where the boundaries of the cylinders are taken in
$\widehat{\widetilde{M}}$. As with $M$, for every $q\in\widetilde{\mathfrak{V}}$ we define $\Sep_q(*)$ as the set of leaves of $*\in\{\widetilde{\mathcal{F}}^u,\widetilde{\mathcal{F}}^s\}$ based at $q$. In this context, the proof of Proposition~\ref{ref:DensitySeparatrices} for translation surfaces then applies to $\widetilde{M}$ and we get the following:


\begin{corollary}
	\label{Lemma:SeparatricesAreDense}
Let $q\in\widetilde{\mathfrak{V}}$. Then any separatrix in $\Sep_q(\widetilde{\mathcal{F}}^u)\cup\Sep_q(\widetilde{\mathcal{F}}^s)$ is dense in $\widetilde{M}$.
\end{corollary}
If separatrices are dense upstairs they are dense downstairs. This ends the proof of Proposition~\ref{ref:DensitySeparatrices}.


\end{proof}

Let now $p\in S$ be the marked puncture and $\Sep_p(\mathcal{F}^u)=\{\gamma_1,\ldots,\gamma_m\}$. We denote by $S_\mu$
a fixed complete hyperbolic structure on $S$ ($\mu$ stands for the metric) of the first kind and define the completed ray graph $R(S;p)$ with respect to $\mu$.
Remark that in $S_\mu$ the point $p$ becomes a cusp, \emph{i.e.} a point at infinity. In what follows we associate to each $\gamma_i$ a simple geodesic in $S_\mu$ based at $p$. For elements in $\Sep_p(\mathcal{F}^s)$ the arguments are analogous.
The ideas we present are largely inspired by the work of P. Levitt~\cite{Levitt83}.

Henceforth $\pi:\mathbb{D}\to S_\mu$ denotes the universal cover, $\Gamma<{\rm PSL(2,\mathbb{R})}$ the Fuchsian group for which $S_\mu=\mathbb{D}/\Gamma$, $\tilde{p}\in\partial\mathbb{D}$ a chosen point in lift of the cusp $p$ to $\partial\mathbb{D}$ and $\tilde{\gamma_i}=\tilde{\gamma_i}(\tilde{p})$ the unique lift of $\gamma_i$ to $\mathbb{D}$ based at $\tilde{p}$.

\emph{Claim:} $\tilde{\gamma_i}$ converges to two distincs points in $\partial\mathbb{D}$.

First remark that since $\gamma_i$ is not a loop, then it is sufficient to show that $\tilde{\gamma_i}(t)$ converges to a point when considering the parametrization that begins at $\tilde{p}$ and $t\to\infty$. Recall that in $S$, the point $p$ is in a region bounded by a $2m$-polygon whose sides belong to closed curves ${\alpha_1,\ldots,\alpha_m,\beta_1,\ldots,\beta_m}$ in $\alpha\cup\beta$. Each of these curves is transverse to the leaves of $\mathcal{F}^u$ and of $\mathcal{F}^s$. 
Up to making an isotopy, we can suppose without loss of generality that the first element in ${\alpha_1,\ldots,\alpha_m,\beta_1,\ldots,\beta_m}$ intersected by $\gamma_i(t)$ (for the natural parametrization used in the proof of Proposition~\ref{ref:DensitySeparatrices}) is $\alpha_j$. Given that $\alpha_j$ is transverse to $\mathcal{F}^u$ and $\gamma_i$ is dense in $M$ we have that $\gamma_i\cap\alpha_j$ is dense in $\alpha_j$. In consequence $\tilde{\gamma_i}$ intersects $\pi^{-1}(\alpha_j)$ infinitely often. Remark that $\tilde{\gamma_i}$ intersects a connected component of $\pi^{-1}(\alpha_j)$ at most once. Indeed, if this was not the case there would exists a disc $D$ embedded in $\mathbb{D}$ whose boundary $\partial D$ is formed by an arc in $\tilde{\gamma_i}$ and an arc contained in $\pi^{-1}(\alpha_j)$ transverse to $\widetilde{\mathcal{F}^u}:=\pi^{-1}(\mathcal{F}^u)$. This is impossible because all singularities of $\widetilde{\mathcal{F}^u}$ are saddles (in particular only a finite number of separatrices can steam from each one of them) and there is a finite number of them inside $D$. Then, all limit points in $\widetilde{\gamma_i}$ in $\mathbb{D}\cup\partial\mathbb{D}$ different from $\tilde{p}$ are in the intersection of an infinite family of nest domains $\mathbb{D}\cup\partial\mathbb{D}$ whose boundaries in $\mathbb{D}$
are components of $\pi^{-1}(\alpha_j)$. 
Moreover, this intersection is a single point $q_i\in\partial\mathbb{D}$ because it has to be connected and the endpoints in $\partial\mathbb{D}$ of components in $\pi^{-1}(\alpha_j)$ are dense in $\mathbb{D}$ since $\Gamma$ is Fuchsian of the first kind. This finishes the proof of our claim above.

We define thus $\widetilde{\delta_i}=\widetilde{\delta_i}(\tilde{p})$ to be the geodesic in $\mathbb{D}$ whose endpoints are $\tilde{p}$ and $q_i$ as above and  $\delta_i:=\pi(\widetilde{\delta_i})$. The geodesic $\delta_i$ is well defined: it does not depend on the lift of $\gamma_i$ based at $\tilde{p}$ we have chosen and if we changed $\tilde{p}$ by some $\tilde{p}'=g p$ for some $g\in\Gamma$ then by continuity $q_i'=g q_i$. On the other hand $\delta_i$ is simple: if this was not the case two components of $\pi^{-1}(\delta_i)$ would intersect and this implies that two components of $\pi^{-1}(\gamma_i)$ intersect, which is imposible since $\widetilde{\mathcal{F}^u}$ is a foliation. Remark that if $\gamma_i\neq \gamma_j$ then $\delta_i$ and $\delta_j$ are disjoint. If this was not the case then there would be a geodesic in $\pi^{-1}(\delta_i)$ intersecting a geodesic in $\pi^{-1}(\delta_j)$, but this would imply that a connected component of $\pi^{-1}(\gamma_i)$ intersects a connected component of $\pi^{-1}(\gamma_j)$, which is impossible since $\widetilde{\mathcal{F}}^u$ is a foliation.

Hence we can associate to the set of separatrices $\{\gamma_1,\ldots,\gamma_m\}$ a set of pairwise distinct simple geodesics $\{\delta_1,\ldots,\delta_m\}$ based at $p$. Remark that by construction this set is $f$-invariant. In what follows we show that $\{\delta_1,\ldots,\delta_m\}$ is a clique of high-filling rays. By applying the same arguments to the separatrices of $\mathcal{F}^u$ based at $p$ one obtains a different $f$-invariant clique of $m$ high-filling rays. These correspond to the only two points in the Gromov boundary of the loop graph $L(S;p)$ fixed by $f$.

 Let $\tilde{\delta}$  be a geodesic in $\mathbb{D}$ based at $\tilde{p}$ such that $\delta:=\pi(\tilde{\delta})$ is a simple geodesic in $S_\mu$ which does not belong to $\{\delta_1,\ldots,\delta_m\}$. We denote by $q$ the endpoint of $\tilde{\delta}$ which is different from $\tilde{p}$. Since every short ray or loop has a geodesic representative, it is sufficient to show that for every $i=1,\ldots,m$ there exists a (geodesic) component of $\pi^{-1}(\delta_i)$ which intersects $\tilde{\delta}$.  We recommend to use Figure \ref{fig:fol} as a guide for the next paragraph.

\begin{figure}[!ht]
\begin{center}
\includegraphics[scale=1]{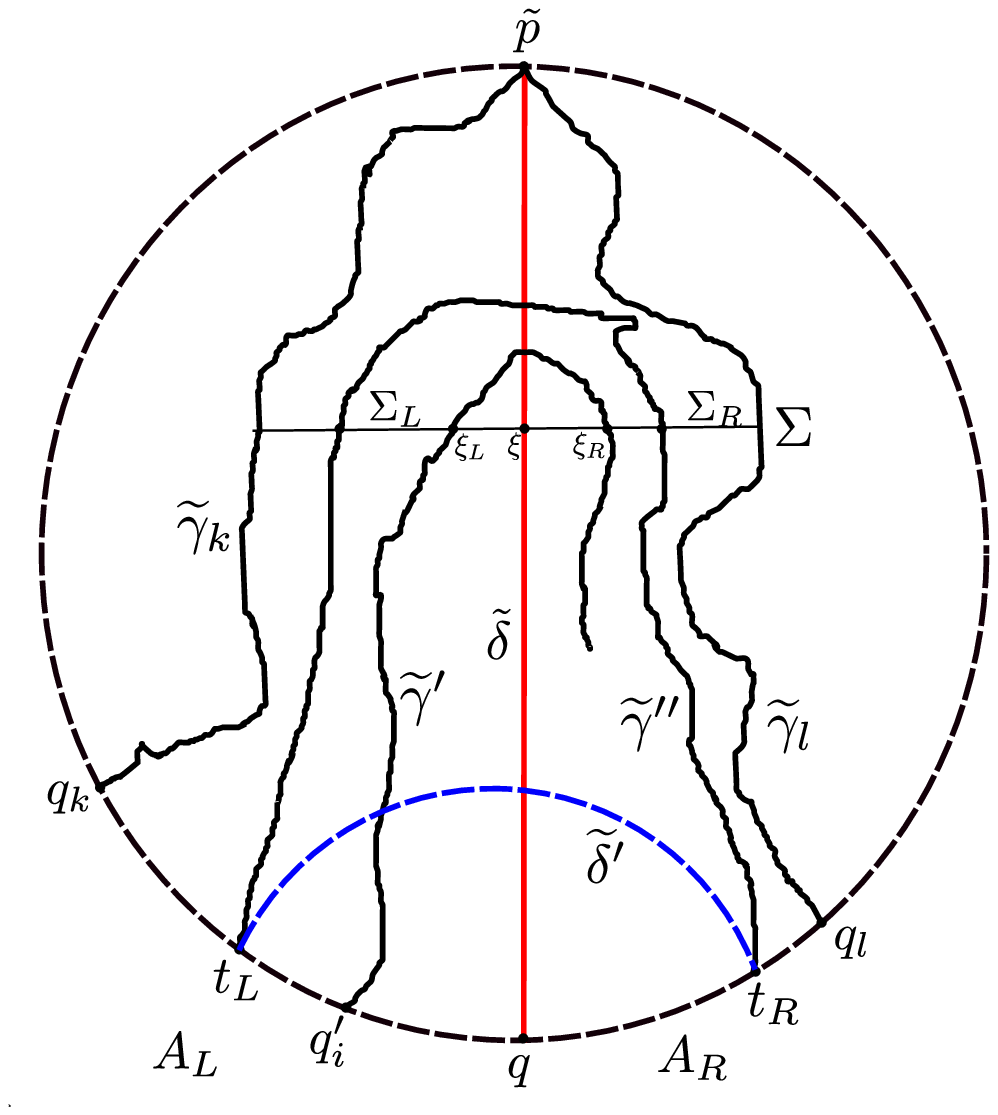}
\end{center}
\caption{}
\label{fig:fol}
\end{figure}



All components of $\pi^{-1}(\Sep_p(\mathcal{F}^u))$ with one endpoint in $\tilde{p}$ are of the form $\{g^k\tilde{\gamma}_1,\ldots g^k\tilde{\gamma}_m\}_{k\in\mathbb{Z}}$, with $g\in\Gamma$ parabolic fixing $\tilde{p}$.  Hence, there exists a closed disc $D\subset\mathbb{D}\cup\partial\mathbb{D}$
 whose boundary is formed by $\tilde{p}\cup \tilde{\gamma}_k\cup \tilde{\gamma}_l\cup A $, where $A$ is a closed arc in $\partial\mathbb{D}$ containing $q$; $\tilde{\gamma}_k$, $\tilde{\gamma}_l$ are (lifts of) separatrices and there is no element of  $\pi^{-1}(\Sep_p(\mathcal{F}^u))$ with one endpoint in $\tilde{p}$ in the interior of $D$. Remark that the endpoints of $A$ are
 $q_k$ and $q_l$ (the endpoints of $\tilde{\gamma}_k$ and $\tilde{\gamma}_l$ respectively). Given that $p$ is a saddle-type singularity of the foliation $\mathcal{F}^u$ there exists a neighbourhood of $\tilde{p}$ in $D$ which contains a segment\footnote{As a matter of fact this segment can be taked to live in one of the (lifts of) the curves in $\alpha\cup\beta$ forming the boundary of the disc in $S\setminus\alpha\cup\beta$ containing $p$.} $\Sigma$ with one endpoint in $\tilde{\gamma}_k$ and the other in $\tilde{\gamma}_l$, and which is transverse to $\widetilde{\mathcal{F}^u}$ except at one point $\xi$ in its interior. Moreover, since $p$ is an isolated singularity of $\mathcal{F}^u$, we can suppose that the closure of the connected component of $D\setminus\Sigma$ in $\mathbb{D}$ having $\tilde{p}$ in its boundary does not contain singular points of $\widetilde{\mathcal{F}^u}$ different from $\tilde{p}$. The point $\xi$ divides $\Sigma$ in two connected components $\Sigma_L$ and $\Sigma_R$. On the other hand $q$ divides $A$ in two connected components $A_L$ and $A_R$. Now let $i=1,\ldots,m$ be fixed. Since $\gamma_i$ is dense in $M$ we have that $\pi^{-1}(\gamma_i)$ is dense in $\mathbb{D}$ and in particular $\pi^{-1}(\gamma_i)\cap \Sigma$ is dense in $\Sigma$.
  Hence we can pick a leaf $\tilde{\gamma}_i'$ in $\pi^{-1}(\gamma_i)$ passing through a point $\xi_L\in\Sigma_L$ and suppose without loss of generality that one of its endpoints $q_i'$ is in $A_L$. Then $\tilde{\gamma}_i'\cap\Sigma=\{\xi_L,\xi_R\}$ with $\xi_R\in \Sigma_R$. Again, given that $\pi^{-1}(\gamma_i)\cap \Sigma$ is dense in $\Sigma$, we can find a leaf $\tilde{\gamma_i}''\in\pi^{-1}(\gamma_i)$ which intersects $\Sigma$ transversally
  at a point $\eta_R$ between $\xi_R$ and $\tilde{\gamma}_l$, and which has an enpoint $t_R$ in $A_R$ arbitrarly close to $q_l$. This is true because of the way $q_l$ was found: there is a family of connected components of $\pi^{-1}(\alpha)$, for some closed curve $\alpha$ in $M$ transverse to $\mathcal{F}^u$, bounding domains in $\mathbb{D}\cup\partial\mathbb{D}$ whose intersection is $q_l$. Now, by the way $\Sigma$ was chosen we have that $\tilde{\gamma}_i''\cap\Sigma=\{\eta_L,\eta_R\}$ with $\eta_L\in\Sigma_L$. This implies that $\tilde{\gamma}_i''$ has and
  endpoint $t_L$ in $A_L$. Hence the geodesic $\tilde{\delta}'$ determined by the endpoints $t_L$ and $t_R$ intersects $\tilde{\delta}$ and  $\delta_i=\pi(\tilde{\delta}')$ intersects $\delta=\pi(\tilde{\delta})$.\qed

\begin{remark}

In the proof of Theorem~\ref{THM:Loxodromics} we made use of the fact that every short-ray or loop has a geodesic representative, but this is not necessary. As a matter of fact the following is true: if $\tilde{\delta}$ is any curve in $\mathbb{D}$ based at $\widetilde{p}$ whose extremities define two different points in $\partial\mathbb{D}$ and such that $\pi(\tilde{\delta})=\delta$ is simple and does not belong to the set of separatrices $\{\gamma_1,\ldots,\gamma_m\}$, then for any $j=1,\ldots,m$ the geodesic $\delta_j$ intersects $\delta$.

On the other hand, in the proof of Theorem~\ref{THM:Loxodromics} the density of each separatrix of $\mathcal{F}^u$ or $\mathcal{F}^s$ on the \emph{whole} surface $S$ is not used. The proof remains valid if we only require separetrices of $\mathcal{F}^u$ and $\mathcal{F}^s$ to be dense on a subsurface $S'\subset S$ of finite type with enough topology, \emph{e.g.} such that all curves defining the polygon on which $p$ lives are essential in $S'$. In particular we have the following:
\end{remark}

\begin{corollary}
    Let $S'\subset S$ be an essential subsurface of finite topological type containing $p$ and $h\in\MCG(S')$ a pseudo-Anosov element for which $p$ is a $k$-prong for some $k\in\mathbb{N}$. Let $\hat{h}$ be the extension (as the identity) of $h$ to $S$.
    Then $\hat{h}$ is a loxodromic element of weight $k$. Moreover, the separatrices of the invariant transverse measured foliations of $h$ based at $p$ define\footnote{By a stretching process as described in the proof of Theorem~\ref{THM:Loxodromics}} the cliques of high-filling rays fixed by $\hat{h}$.
\end{corollary}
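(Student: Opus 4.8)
The plan is to run the proof of Theorem~\ref{THM:Loxodromics} verbatim, with the pseudo-Anosov $h$ playing the role of the element $f$ and its invariant measured foliations playing the role of the foliations $\mathcal{F}^u,\mathcal{F}^s$ produced there by the Hooper--Thurston--Veech construction. Let $(\mathcal{F}^u,\mu_u)$ and $(\mathcal{F}^s,\mu_s)$ be the unstable and stable measured foliations of $h$ on $S'$, so that $h\cdot(\mathcal{F}^u,\mu_u)=(\mathcal{F}^u,\eta\mu_u)$ and $h\cdot(\mathcal{F}^s,\mu_s)=(\mathcal{F}^s,\eta^{-1}\mu_s)$ for the dilatation $\eta>1$. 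By hypothesis $p$ is a $k$-prong singularity, hence $|\Sep_p(\mathcal{F}^u)|=|\Sep_p(\mathcal{F}^s)|=k$; write $\Sep_p(\mathcal{F}^u)=\{\gamma_1,\dots,\gamma_k\}$. The input that replaces Proposition~\ref{ref:DensitySeparatrices} is the classical fact that the invariant foliations of a pseudo-Anosov on a finite-type surface are minimal: every half-leaf not ending at a singularity---in particular each separatrix $\gamma_i$---is dense in $S'$. This is the step where Theorems~\ref{THM:NoSaddleConnections} and~\ref{THM:ConservativeFlow} of Hooper were used in the proof of Theorem~\ref{THM:Loxodromics}; here they are superseded by the standard finite-type theory, so this part is in fact easier.

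With density in hand, the rest is a transcription of the stretching argument. Fix a complete hyperbolic metric $\mu$ on $S$ of the first kind with $p$ a cusp, let $\pi:\mathbb{D}\to S_\mu$ be the universal cover, $\Gamma<\PSL(2,\R)$ the corresponding Fuchsian group, $\widetilde{p}\in\partial\mathbb{D}$ a lift of the cusp and $\widetilde{\gamma_i}$ the lift of $\gamma_i$ based at $\widetilde{p}$. Since $S'$ supports a pseudo-Anosov it has positive complexity, so it contains a simple closed curve $\alpha$ that is essential and non-peripheral in $S$; isotope $\alpha$ to be quasi-transverse to $\mathcal{F}^u$. Density of $\gamma_i$ in $S'$ forces $\widetilde{\gamma_i}$ to meet $\pi^{-1}(\alpha)$ infinitely often, and it meets each component at most once (an embedded disc in $\mathbb{D}$ bounded by an arc of $\widetilde{\gamma_i}$ and an arc of $\pi^{-1}(\alpha)$ transverse to $\pi^{-1}(\mathcal{F}^u)$ is ruled out by the Euler--Poincar\'e index obstruction, exactly as in the proof of Theorem~\ref{THM:Loxodromics}). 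Because $[\alpha]$ is hyperbolic in $\Gamma$ and $\Gamma$ is of the first kind, the $\Gamma$-orbit of its fixed points is dense in $\partial\mathbb{D}$, so the nested half-planes cut off by $\pi^{-1}(\alpha)$ shrink and $\widetilde{\gamma_i}$ converges to a single point $q_i\in\partial\mathbb{D}$. This yields pairwise disjoint simple geodesic rays $\delta_1,\dots,\delta_k$ based at $p$, with $\hat h$-invariant union, and similarly $\delta_1',\dots,\delta_k'$ from $\mathcal{F}^s$.

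Finally one checks that $\Gamma^+:=\{\delta_i\}$ and $\Gamma^-:=\{\delta_i'\}$ are cliques of high-filling rays and are the only $\hat h$-invariant cliques in $\partial L(S;p)$. Given any simple geodesic ray or loop $\delta$ based at $p$ not among the $\delta_i$, with $q\neq\widetilde p$ the endpoint of a lift $\widetilde\delta$, one builds the disc $D\subset\mathbb{D}\cup\partial\mathbb{D}$ bounded by $\widetilde p$, two consecutive separatrix lifts $\widetilde\gamma_k,\widetilde\gamma_l$ and a boundary arc $A\ni q$, takes a short transversal $\Sigma$ to $\pi^{-1}(\mathcal{F}^u)$ near $\widetilde p$ with endpoints on $\widetilde\gamma_k$ and $\widetilde\gamma_l$, and uses density of $\pi^{-1}(\gamma_i)$ along $\Sigma$ to produce two leaves of $\pi^{-1}(\gamma_i)$ with endpoints on the two sides of $q$ in $A$; the geodesic they span must cross $\widetilde\delta$, so $\delta_i$ crosses $\delta$. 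Hence no loop or short ray is disjoint from all $\delta_i$, i.e. $\{\delta_i\}$ is high-filling; since $\hat h$ scales $\mu_u$ by $\eta>1$ and $\mu_s$ by $\eta^{-1}$, the cliques $\Gamma^+$ and $\Gamma^-$ are respectively attracting and repelling fixed points of $\hat h$ acting on $\partial L(S;p)$, so $\hat h$ is loxodromic, and by the Bavard--Walker description its weight is $|\Gamma^+|=|\Gamma^-|=k$.

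The only point genuinely beyond Theorem~\ref{THM:Loxodromics} is that the separatrices are now dense only in the finite-type subsurface $S'$, not in all of $S$; this is exactly what the remark preceding the statement addresses, and it causes no trouble because $S'$ has enough topology---it supports a pseudo-Anosov---to contain a simple closed curve essential in $S$ that every separatrix of $\mathcal{F}^u$ and $\mathcal{F}^s$ based at $p$ must cross. I expect this bookkeeping (ensuring an appropriate essential curve inside $S'$ and that the nested-domain shrinking still applies) to be the only delicate point; everything else is a line-by-line translation of the two halves of the proof of Theorem~\ref{THM:Loxodromics}, with the minimality of pseudo-Anosov foliations replacing the appeal to Hooper's theorems.
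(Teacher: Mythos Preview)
Your proposal is correct and matches the paper's own approach: the corollary is stated as an immediate consequence of the remark preceding it, which observes that the proof of Theorem~\ref{THM:Loxodromics} only requires the separatrices to be dense in a finite-type subsurface with enough topology, and for a pseudo-Anosov on $S'$ this density is the classical minimality of the invariant foliations. The paper gives no further argument beyond this remark (and a citation to Bavard--Walker for an independent proof), so your detailed transcription of the stretching and high-filling steps is, if anything, more explicit than what the paper provides.
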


This result already appears in the work of Bavard and Walker~\cite{BaWa18B}, see Lemma 7.2.1 and Theorem 8.3.1.

\subsection{Proof of Theorem~\ref{THM:Multicurves}}

\subsubsection{Preliminaries}



In this section we present, for each infinite type surface, a model that is convenient for the proof of  Theorem~\ref{THM:Multicurves}.


\noindent{\bf{Normal forms for infinite-type surfaces}}. In what follows we detail how to construct, for any infinite type surface $S$, a graph $T(S)\subset\mathbb{H}^3$ having a regular neighbourhood whose boundary is homeomorphic to $S$ (our choice of ambient space obeys illustrative purposes only). There are many ways to construct such graph. The one we present is intended to make the proof of Theorem~\ref{THM:Multicurves} more transparent.

 Let $2^\N:=\prod_{j\in\N}\{0,1\}_j$ be the Cantor set. In general terms, the construction is as follows. We consider a rooted binary tree $T2^\N$, a homeomorphism $f:\prod_{j\in\N}\{0,1\}\to\Ends(T2^\N)$ from the standard binary Cantor set to the space of ends of this tree, and we choose a topological embedding $i:\Ends(S)\hookrightarrow \prod_{j\in\N}\{0,1\}$. We show that there exists a subtree  of $T2^\N$ whose space of ends is precisely $f\circ i(\Ends(S))$. For our purposes it is important that this subtree is \emph{simple} (see Definition~\ref{DEF:SimpleTree} below). Then, if $S$ has genus, we perform a surgery on vertices of the aforementioned subtree of of $T2^\N$ belonging to rays starting at the root and having one end on $f\circ i(\Ends_\infty(S))$.

\noindent \textbf{The rooted binary tree}. For every $n\in\N$ let $2^{(n)}:=\prod_{i=1}^n\{0,1\}$ and $\pi_i:2^{(n)}\to\{0,1\}$ the projection on to the $i^{th}$ coordinate. The rooted binary tree is the graph  $T2^\N$ whose vertex set $V(T2^\N)$ is the union of the simbol $\mathfrak{r}$ (this will be the root of the tree) with the set $\{D:D\in2^{(n)}\hspace{1mm}\text{for some $n\in\N$}\}$. The edges $E(T2^\N)$ are $\{(\mathfrak{r},0)$, $(\mathfrak{r},1)\}$ together with:
$$
\{(D,D'):D \in 2^{(n)}\hspace{1mm} D'\in2^{(n+1)}\hspace{1mm}\text{for some $n\in\N$},\hspace{1mm}\text{and}\hspace{1mm}\pi_i(D_s)=\pi_i(D_t)\hspace{1mm}\forall 1\leq i\leq n\}
$$
Henceforth $T2^\N$ is endowed with the combinatorial distance. For every $\hat{x}=(x_n)\in 2^\N$ we define $r(\hat{x})=(\mathfrak{r},a_1,\ldots,a_n):=(x_1,\ldots,x_n,\ldots)$ to be the infinite geodesic ray in $T2^N$ starting from $\mathfrak{r}$ and ending in $\Ends(T2^\N)$. Then, the map
\begin{equation}
f:\prod_{i\in\N}\{0,1\}\to\Ends(T2^\N)
\end{equation}
which associates to each infinite sequence $\hat{x}=(x_n)_{n\in\N}$ the end $f(\hat{x})$ of $T2^\N$ defined by the infinite geodesic ray $r(\hat{x})$ is a homeomorphism.

\begin{definition}
    \label{DEF:SimpleTree}
Let $v$ and $v^*$ two different vertices in a subtree $\mathcal{T}$ of $T2^\N$. If $v$ is contained in the geodesic which connects $v^*$ with $\mathfrak{r}$, then we say that $v^*$ is a \emph{descendant} of $v$. A connected rooted subtree of $\mathcal{T}$ without leaves is \emph{simple} if all descendants of a vertex $v\neq\mathfrak{r}$ of degree two, have also degree two.
\end{definition}

\begin{lemma}
    \label{Lemma:SimpleSubtrees}
Let $F\subset\Ends(T2^\N)$ be closed. Then there exists a \emph{simple} subtree $T$ of $T2^\N$ rooted at $\mathfrak{r}$ such that  $F$ is homeomorphic to $\Ends(T)$.
\end{lemma}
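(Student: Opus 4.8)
The plan is as follows. Identify $\Ends(T2^\N)$ with the Cantor set $2^\N$ via the homeomorphism $f$ introduced above, and identify the vertices of $T2^\N$ with finite binary words, the root $\mathfrak r$ being the empty word; we may assume $F\neq\emptyset$. The naive candidate for $T$ — the subtree spanned by all finite prefixes of elements of $F$ — does satisfy $\Ends=F$, but it is in general \emph{not} simple: as soon as $F$ is supported, below some vertex, on a single child and only branches again further down, one produces degree-two vertices with degree-three descendants, violating Definition~\ref{DEF:SimpleTree}. The remedy is to build a different subtree which \emph{skips over} these non-branching segments, and then to check that skipping does not change the homeomorphism type of the space of ends.

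First I would set up the bookkeeping. For a nonempty closed $G\subseteq 2^\N$ let $\kappa(G)$ be the longest word that is a prefix of every element of $G$. If $|G|=1$ this is the unique (infinite) element of $G$; if $|G|\ge 2$ it is a finite word, the set $G^\flat:=\{x\in 2^\N:\kappa(G)x\in G\}$ has both halves $G^\flat_0=\{x:0x\in G^\flat\}$ and $G^\flat_1=\{x:1x\in G^\flat\}$ nonempty, and $x\mapsto\kappa(G)x$ is a homeomorphism from $G^\flat$ onto $G$. Now define, for a nonempty closed $G$ and a vertex $v$, a subtree $T[G,v]$ rooted at $v$ and lying in the cone of descendants of $v$ by the coinductive rule: if $|G|=1$, let $T[G,v]$ be the ray $v,v0,v00,v000,\dots$; if $|G|\ge 2$, let $T[G,v]=\{v\}\cup T[G^\flat_0,v0]\cup T[G^\flat_1,v1]$ (edges inherited from $T2^\N$). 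Since the cones of $v0$ and $v1$ are disjoint and miss $v$, this is an honest subtree; set $T:=T[F,\mathfrak r]$.

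Next I would verify that $T$ is a simple subtree of $T2^\N$ rooted at $\mathfrak r$. It is connected and contains $\mathfrak r$; every vertex acquires at least one child, so it has no leaves; and it is locally finite, being a subtree of $T2^\N$. Its branch points are exactly the vertices produced by the second clause, which always receive \emph{both} children and hence have degree $3$ (or degree $2$, if equal to $\mathfrak r$); every other non-root vertex has degree $2$ and lies in the interior of one of the rays appended by the first clause, so all of its descendants lie on that same ray and are themselves of degree $2$. Thus the condition in Definition~\ref{DEF:SimpleTree} holds.

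Finally, the heart of the argument: $\Ends(T)\cong F$. An end $\xi$ of $T$ determines a sequence of "states" $G^{(0)}=F,\,G^{(1)},\,G^{(2)},\dots$, where, as long as $|G^{(k)}|\ge 2$, the end $\xi$ selects a child $i_{k+1}\in\{0,1\}$ at the corresponding branch point and $G^{(k+1)}:=(G^{(k)})^\flat_{i_{k+1}}$. Set $h(\xi):=\kappa(G^{(0)})\,i_1\,\kappa(G^{(1)})\,i_2\cdots\in 2^\N$, the concatenation being completed by the infinite word $\kappa(G^{(k)})$ if some $G^{(k)}$ is a singleton. Every finite prefix of $h(\xi)$ is a prefix of an element of $F$, so $h(\xi)\in F$ because $F$ is closed; two distinct ends first diverge at a branch point and therefore receive distinct images, so $h$ is injective; reading any target $y\in F$ off symbol by symbol along the words $\kappa(G^{(k)})$ reconstructs a unique end mapping to $y$, so $h$ is surjective; and $h$ is continuous because any finite initial segment of $h(\xi)$ is determined by a finite initial segment of $\xi$. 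Since $\Ends(T)$ is compact — it is the closed subset of $2^\N$ consisting of those sequences all of whose prefixes are vertices of $T$ — and $F$ is Hausdorff, $h$ is a homeomorphism. The only delicate point is this last step, i.e.\ being sure that the skipping preserves the end space up to homeomorphism; everything else is routine bookkeeping.
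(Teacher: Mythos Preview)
Your argument is correct and takes a somewhat different route from the paper's. The paper starts from the induced subtree $T_F=\bigcup_{\hat x\in f^{-1}(F)}r(\hat x)$, notes that the only obstruction to simplicity consists of finite chains of degree-$2$ vertices lying between branch points, and \emph{collapses} each such chain to a single edge; since contracting a finite simple path does not change the end space, the result is simple with $\Ends\cong F$. This is quicker, though strictly speaking the output is an abstract tree rather than a subgraph of $T2^\N$ (the new edge $(A_i,B_i)$ joins vertices at combinatorial distance ${>}1$), so a tacit re-embedding is needed to match the statement literally. Your construction sidesteps this by building inside $T2^\N$ from the outset: whenever the current piece $G$ still branches you branch immediately at the current vertex and discard the common prefix $\kappa(G)$, so no non-simple degree-$2$ vertex is ever created. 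The cost is the extra bookkeeping (the coinductive definition of $T[G,v]$ and the explicit homeomorphism $h$); the gain is that you produce a genuine subtree of $T2^\N$ and an explicit witness to $\Ends(T)\cong F$. Conceptually both proofs implement the same idea---skip the non-branching segments---one by quotienting and the other by re-routing.
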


We postpone the proof of this lemma to the end of the section.

\begin{definition}
    \label{DEF:InducedSubtree}
Given a subset $F$ of $\Ends(T2^\N)$ we define $T_F:=\bigcup_{\hat{x}\in f^{-1}(F)} r(\hat{x})\subseteq T2^\N$ and call it the \emph{tree induced by $F$}.
\end{definition}

\textbf{Surgery}. Let $T$ be a subtree of $T2^\N$ rooted at $\mathfrak{r}$ and having no leaves different from this vertex, if any. Let $L$ be a subset of the vertex set of $T$. We denote by $\Gamma_{T,L}$ the graph obtained from $T$ and $L$ after performing the following operations on each vertex $v\in L$:
\begin{enumerate}
\item If $v$ has degree 3 with adjacent descendants $v', v''$  we delete first the edges $\{(v,v'),(v,v'',)\}$. Then we
 add to $L$ two vertices $v_*',v_*''$ and the edges $\{(v,v_*'),(v,v_*''),(v_*',v'),(v_*'',v''),(v'_*,v_*'')\}$.
\item If $v$ has degree 2 and $v'$ is its adjacent descendant, we delete first the edge $(v,v')$. Then we add to $L$ two vertices $v_*',v_*''$ and the edges $\{(v,v_*'),(v,v_*''),(v_*',v'),(v'_*,v_*'')\}$.
\end{enumerate}

\begin{definition}
    \label{DEF:Graph(S)}
Let $S$ be a surface of infinite type of genus $g\in\N\cup\{\infty\}$ and $\Ends_\infty(S)\subset\Ends(S)\subset 2^\N$ its space of ends accumulated by genus and space of ends, respectively. We define the graph $T(S)$ according to the following cases. In all of them we suppose w.l.o.g that $T_{f(\Ends(S))}$ is simple.
\begin{enumerate}
    \item If $g=0$ let $T(S):=T_{f(\Ends(S))}$,
    \item if $g\in\Z_{>0}$ let $T(S):=\Gamma_{T,L}$ where $T=T_{f(\Ends(S))}$ and $L={a_1,a_2,\ldots,a_g}$ and $(\mathfrak{r},a_1,\ldots,a_g,\ldots)=r(\hat{x})$ for some $\hat{x}\in\Ends(S)\subset 2^\N$, and
    \item if $g=\infty$ let $T(S):=\Gamma_{T,L}$ where $T=T_{f(\Ends(S))}$ and $L$ is the set of vertices of the subtree $T_{f(\Ends_{\infty}(S))}\subset T_{f(\Ends(S))}$.
\end{enumerate}
\end{definition}

By construction, there exists a geometric realization for $T(S)$ as a graph in the plane $\{(x,0,z)\in\mathbb{H}^3:z>0\}$ in 3-dimensional hyperbolic space, which we denote again by $T(S)$. Moreover there exists a closed regular neighbourhood $N(T(S))$ so that $S$ is homeomorphic to $S'=\partial N(T(S))$, see Figure~\ref{Embedding}. Observe that $T(S)$ is a strong deformation retract of $N(T(S))$.  We identify $S$ with $S'$, and we say that $S$ is in \emph{normal form}, and $T(S)$ is the \emph{underlying} graph that induces $S$.



\begin{figure}[!ht]
\begin{center}
	\includegraphics[width=.45\textwidth]{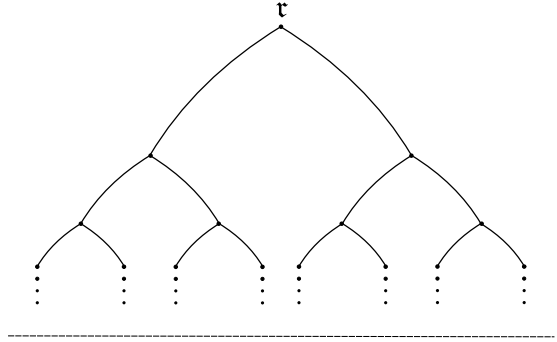}
	\caption{Embedding of the tree $T(S)$.}
	\label{Embedding}
\end{center}
\end{figure}

\begin{remark}
    \label{Remark:ContrasteconWalker&Juliette}
In \cite{BaWa18B}, A. Walker and J. Bavard carry out a similar construction. For this they introduce the notion of \emph{rooted core tree} $T$ from which they construct a surface $\Sigma(T)$ homeomorphic to a given infinite type surface $S$, see Lemma 2.3.1 in \cite{BaWa18B}. The graph $T_{f(\Ends(S))}$ (see Definition~\ref{DEF:Graph(S)}) is turned into a rooted core tree by declaring that the vertices of $T_{f(\Ends_\infty(S))}$ are the marked vertices. The main difference with the work of Walker and Bavard is that the normal form we are proposing comes from a \emph{simple} tree. This property is strongly used in the proof of Theorem~\ref{THM:Multicurves}.



\end{remark}





\emph{Proof of Lemma~\ref{Lemma:SimpleSubtrees}}. Let $T_F$ be the subtree of $T2^{\N}$ induced by $F$. Let $V'$ be the set of vertices of $T_F$ of degree 2, different from $\mathfrak{r}$, having at least one descendant of degree 3. Then $V'=\sqcup_{i\in I} V'_i$, where:
\begin{enumerate}
    \item $V_i'$ is a subset of the vertices of a ray $r(\hat{x})$ for some $\hat{x}\in f^{-1}(F)$,
    \item for every $i\in I$, one can label $V'_i=\{a_{i,1},\ldots,a_{i,k_i}\}$ so that $a_{i,l+1}$ is a descendant of $a_{i,l}$ adjacent to $a_{i,l}$.
    \item for every $i\in I$, the vertex $A_i$ of $T_F$ adjacent to $a_{i,1}$ other than $a_{i,2}$ is either the root $\mathfrak{r}$ or a vertex of degree 3. Similarly, the vertex $B_i$ of $T_F$ adjacent to $a_{i,k_i}$ other than $a_{i,k_i-1}$ is of degree 3.
\end{enumerate}
Replacing the finite simple path from $A_i$ to $B_i$ by an edge $(A_i,B_i)$ does not modify the space of ends of $T_F$. By doing this for every $i\in I$ we obtain a simple tree as desired. \qed

\emph{Proof of Theorem~\ref{THM:Multicurves}}. This proof is divided in two parts. First we show the existence of a pair of multicurves of finite type whose union fills $S$ and which satisfy (1) and (2). In the second part we use these to construct the desired multicurves $\alpha$ and $\beta$.

\medskip

\noindent {\bf{First part:}} Let $S$ be an infinite-type surface in its normal form, and $T(S)\subset\mathbb{H}^3$ the underlying graph that induces $S$. We are supposing that $T(S)$ is obtained after surgery from a simple tree as described above. The idea here is to construct two disjoint collections $A$ (blue curves) and $B$ (red curves) of pairwise disjoint curves in $S$ such that after forgetting the non-essential curves in $A\cup B$, we get the pair of multicurves  $\alpha$ and $\beta$  which satisfy (1) and (2) as in Theorem~\ref{THM:Multicurves}.


Let $T_g(S)$ be the full subgraph of $T(S)$ generated by all the vertices which define a triangle in $T(S)$. Observe that, since $T(S)$ is constructed by performing a surgery on a simple tree, the graph $T_g(S)$ is connected.

\medskip

Let $T_g'(S)$ be the subset obtained as the union of $T_g(S)$ with all the edges in $T(S)$ adjacent to $T_g(S)$. As $T_g(S)$ is connected, then $T_g'(S)$ is also connected. Let $\Delta$ be a triangle in $T_g(S)$, and $\Delta'$ be the disjoint union of $\Delta$ with all the edges in $T_g'(S)$ adjacent to $\Delta$. We notice that $\Delta'$ is one of the following two possibilities: (1) the disjoint union of $\Delta$ with exactly three edges adjacent to it, or (2) the disjoint union of $\Delta$ with exactly two edges adjacent to it. For each case, we choose blue a red curves in $S$ as indicated in the Figure \ref{ConstructingTheCurvesWithGenus}.

\begin{figure}[!ht]
\begin{center}
	\includegraphics[width=.75\textwidth]{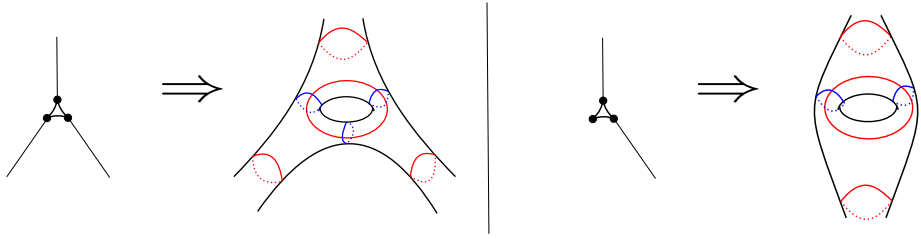}
	\caption{Blue and red curves associated to the neighborhood $\Delta'$ of a triangle $\Delta$.}
	\label{ConstructingTheCurvesWithGenus}
\end{center}
\end{figure}

For each edge $e$ in $T_g'(S)$ which connects two triangles in $T_g'(S)$, we choose a blue curve in $S$ as indicated in Figure \ref{ConectaGenero}.

\begin{figure}[!ht]
\begin{center}
	\includegraphics[width=.35\textwidth]{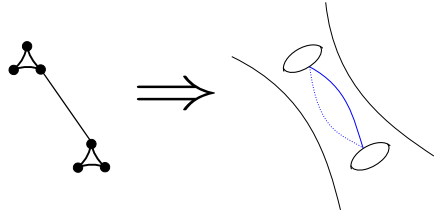}
	\caption{ Blue curve associated to an edge which connects two triangles.}
	\label{ConectaGenero}
\end{center}
\end{figure}
We consider the following cases.

\medskip
\underline{$\Ends(S)=\Ends_{\infty}(S)$}. In this case 
$A$ and $B$ are the multicurves formed by the blue and red curves as chosen above respectively.

\medskip

\underline{$\Ends(S)\neq\Ends_{\infty}(S)$}. Let $C$ be a connected component of $T(S)-T_g'(S)$. Given that $T(S)$ is obtained from a simple tree, $C$ is a tree with infinitely many vertices. Let $v$ be the only vertex in $C$ which is adjacent to an edge $e(v)$ in $T_g'(S)$. If $v$ has degree one in $C$, then every vertex of $C$ different from $v$ has degree two because $T_{f(\Ends(S))}$ is a simple subtree of $T2^\N$. In this case, we have that the subsurface $S(C) \subset S$ induced by $C$ is homeomorphic to a punctured disc. In particular, the red curve in $S$ associated to the edge $e(v)$ chosen as depicted in Figure~\ref{ConstructingTheCurvesWithGenus} is not essential in $S$.

Suppose now that $v$ has degree two in $C$. We color with blue all the edges in $C$ having vertices at combinatorial distances $k$ and $k+1$ from $v$ for every even $k\in\Z_{\geq 0}$. We color all other edges in $C$ in red, see the left-hand side in Figure~\ref{ColorEdgesOfF}. Let $e$ and $e'$ be two edges in $C$ of the same color and suppose that they shares a vertex $v$. Suppose that all vertices of $e \cup e'$ different from $v$ have degree three. If $e$ and $e'$ are marked with blue color (respectively red color), we choose the red curve (respect. blue curve) in $S$ as in the right-hand side of Figure~\ref{ColorEdgesOfF}.

\begin{figure}[!ht]
\begin{center}
	\includegraphics[width=.75\textwidth]{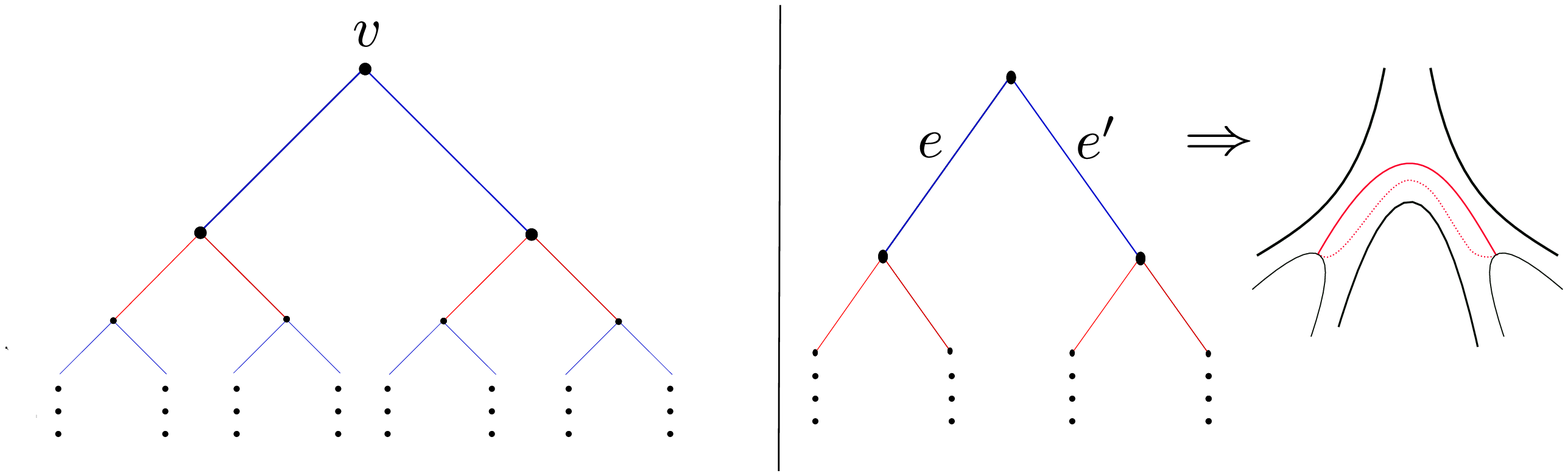}
	\caption{(Left) Edges of $C$ colored with blue and red alternating in levels. (Right) The corresponding curve in $S$ for the pair of edges $e$ and $e^\prime$ marked with blue color. }
	\label{ColorEdgesOfF}
\end{center}
\end{figure}

For the edge $e(v)\in T'_g(S)$, we choose the blue curve in $S$ as in the left-hand side of Figure~\ref{ReplaceBridgeEdge}. Finally, for each edge $e$ of $C$, we take a curve in $S$ with the same marked color of $e$ as is showed in the right-hand side of Figure~\ref{ReplaceBridgeEdge}.

\begin{figure}[!ht]
\begin{center}
	\includegraphics[width=.75\textwidth]{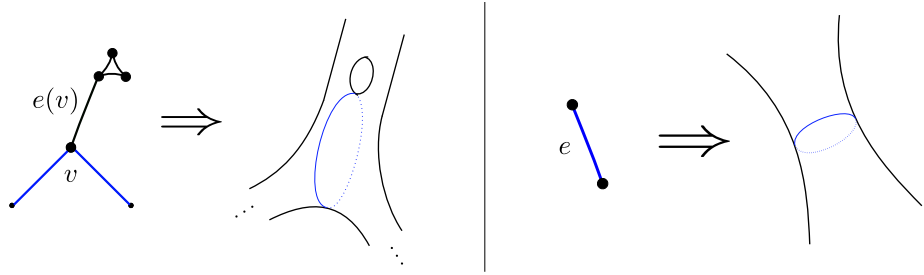}
	\caption{(Left) The corresponding curve for the $e$ which connects $F$ with $T_g'(S)$. (Right) } Blue (red) curve associated to an edge $e$ of $F$.
	\label{ReplaceBridgeEdge}
\end{center}
\end{figure}

If $\Ends(S)$ has at most one isolated planar end here ends the construction of the multicurves $A$ and $B$.

\medskip

Now suppose that $\Ends(S)$ has more that one isolated planar end, that is, $S$ has at least two punctures. Let $R$ be the full subgraph of $T(S)$ generated by all the vertices of degree 3 in $T(S)$ together with the root vertex $\mathfrak{r}$, and define $T_{g}''(S)$ as the full subgraph of $T(S)$ generated by all the vertices in $T(S)$ at distance at most 1 from $R$. The graph $T_{g}''(S)$ is connected (again, because $T_{f(\Ends(S))}$ is simple) and contains $T_{g}'(S)$ . It also has at least two leaves, i.e., vertices of degree one which we denote by $v_1$ and $v_2$. If $v_1$ and $v_2$ are at distance 3 in $T_{g}''(S)$ there exist a single edge $e$ in $T_{g}''(S)$ whose adjacent vertices are at distance 1 from $v_1$ or $v_2$. Let us suppose that this is not an edge of a triangle in $T(S)$. Then $e$ is contained in a connected component of $T(S)\smallsetminus T_{g}'(S)$. In this case, if $e$ is marked with red color (blue color), we choose the blue curve (red curve) in $S$ as is shown in Figure \ref{ReplaceEdgesConnectsLeaves}. In all other cases we do nothing and this finishes the construction of the multicurves $A$ and $B$.

\begin{figure}[!ht]
\begin{center}
	\includegraphics[width=.50\textwidth]{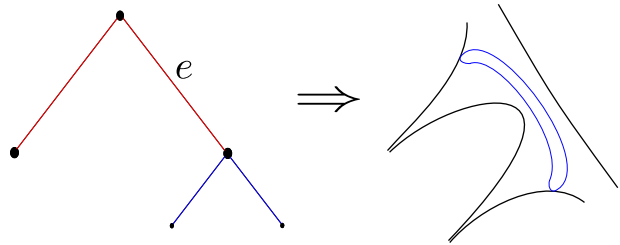}
	\caption{The corresponding blue curve in $S$ for the red edge $e$.}
	\label{ReplaceEdgesConnectsLeaves}
\end{center}
\end{figure}

We define $\alpha=\{\alpha_i\}_{i\in I}$ and $\beta=\{\beta_j\}_{j\in J}$ as the set of essential curves in $A$ and $B$, respectively. By construction, $\alpha$ and $\beta$ are multicurves of finite type in minimal position and $\rm{i}(\alpha_i,\beta_j)\leq 2$ for every $i\in I$ and $j\in J$. Upon investigation, one can observe that each connected component of $S\setminus\alpha\cup\beta$ is a disc or a punctured disc whose boundary is formed by 2, 4, 6 or 8 segments.

\medskip

\noindent {\bf{Second part:}} Let $m\in \mathbb{N}$. Take a finite multicurve $\delta$ in $S$ such that, if $Q$ is the connected component of $S \smallsetminus \delta$ which contains $p$, we have that $Q\setminus p$ is homeomorphic to $S_0^{m+3}$, i.e., a genus zero surfaces with $m+3$ punctures. In $Q$ we choose blue and red curves to form a chain as in Figure \ref{MulticurvesInPointP} and color them in blue and red so that no two curves of the same color intersect. We denote the blue and red curves in $Q$ by $\alpha'$ and $\beta'$ respectively. Remark that  the connected component of $Q \smallsetminus (\alpha'\cup \beta')$ containing the point $p$ is a $2m-$polygon.


\begin{figure}[!ht]
\begin{center}
	\includegraphics[scale=.5]{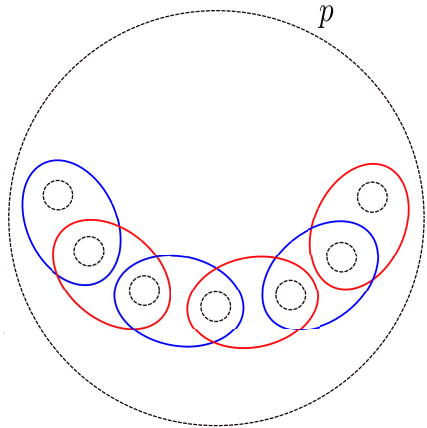}
	\caption{The multicurves $\alpha'$ (blue) and $\beta'$ (red).}
	\label{MulticurvesInPointP}
\end{center}
\end{figure}

The idea is to extend $\alpha^\prime$ and $\beta^\prime$ to multicurves $\alpha$ and $\beta$, respectively, which satisfy all the desired properties. We consider two cases for the rest of the proof.

\underline{$m$ even}. Without loss of generality we suppose that all punctures in $Q$ different from $p$ are encircled by elements of $\alpha'$. Now, let $F$ be a connected component of $S\smallsetminus \alpha^\prime$ not containing the point $p$. Then the closure $\overline{F}$ of $F$ in $S$ is a surface with $b>0$ boundary components. Moreover, $\overline{F}\cap \beta^\prime$ is a finite collection of disjoint essential arcs in $\overline{F}$ with end points in $\partial \overline{F}$, and the end points of an arc in $\overline{F}\cap \beta^\prime$ are in a common connected component of $\partial \overline{F}$. We denote by $\theta_F$ the collection of arcs in $\overline{F}\cap \beta^\prime$, and by $\delta_F$ to be the set of curves in $\delta$ contained in $F$.

\noindent {\bf{Claim:}} There exists a pair of multicurves $\alpha_F''$ and $\beta_F''$ whose union fills $F$, which satisfy (1) \& (2) in Theorem~\ref{THM:Multicurves} and such that $\theta_F \cap \beta_F''=\emptyset$.

Remark that if we define $\alpha:=\alpha^\prime \bigcup \left(\bigcup_{F\subset S\setminus\alpha'} \alpha_F'' \right)$ and $\beta:=\beta^\prime \bigcup \left(\bigcup_{F\subset S\setminus\alpha'} \beta_F'' \right)$, then $\alpha$ and $\beta$ are the desired pair of multicurves.

We divided the proof of our claim in two cases: $b=1$ and $b>1$.

\noindent {\bf {Case $b=1$}}. If $F$ is a finite-type surface is it not difficult to find the multicurves $\alpha_F''$ and $\beta_F''$. If $F$ is of infinite-type let $\alpha''_F$ and $\beta''_F$ be the blue and red curves obtained from applying the first part of the proof of Theorem~\ref{THM:Multicurves} to $F$. Remark that by construction all arcs in $\theta_F$ intersect only one curve in $\alpha_F''\cup\beta_F''$, hence, up to interchanging the colors of $\alpha''_F$ and $\beta_F''$, we can get that $\beta_F''\cap\theta_F=\emptyset$.

\noindent {\bf{Case $b>1$}}. Again, the case when $F$ is a finite-type surface is left to the reader. If $F$ is of infinite type,  let $\gamma$ be the separating curve in $\overline{F}$ which bounds a subsurface $W\subset\overline{F}$ of genus 0 with one puncture, $b$ boundary components and such that $\partial W=\partial\overline{F}$ and write $\overline{F}\smallsetminus \gamma=W\sqcup F_1$. Let $\theta_{F_1}$ to be the set of arcs given by $\theta_F\cap \overline{F_1}$. Let $\eta_1$ and $\eta_2$ be two curves in $F_1$ (not necessary essential) such that $\gamma,\eta_1$ and $\eta_2$ bounds a pair of pants $P$ in $F_1$. If an element of $\theta_F$ intersects $\eta_1 \cup \eta_2$ then we replace it with one that doesn't and which is disjoint from all other arcs in $\theta_F$. Up to making these replacements, we can assume that $\theta_F$ does not intersects $\eta_1 \cup \eta_2$, see Figure~\ref{CasobMayor1A}. Hence, $\theta_{F_1}\subseteq \overline{P}$. As $\overline{F_1}$ has one boundary component, by the case $b=1$ above, there exist a pair of multicurves $\alpha_{F_1}''$ and $\beta_{F_1}''$ which fills $F_1$ and such that $\theta_{F_1}\cap \beta_{F_1}''=\emptyset$. Define then $\alpha_F^{''}:=\{\gamma \}\cup \alpha_{F_{1}}^{''}$ and $\beta_F^{''}:=\beta_{F_{2}}^{''}$.

\begin{figure}[!ht]
\begin{center}
	\includegraphics[scale=.5]{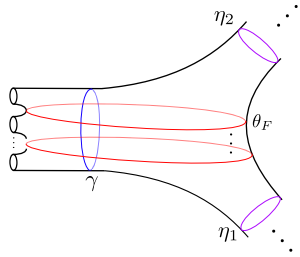}
	\caption{Case $b>1$.}
	\label{CasobMayor1A}
\end{center}
\end{figure}

\underline{m odd}. Without loss of generality we suppose that $\alpha'$ encircles all punctures in $Q$ different from $p$ except one. We add the curves $\alpha_1$ and $\beta_1$ to $\alpha'$ and $\beta'$  as depicted in Figure~\ref{CasobMayor1B} respectively. Then we consider each connected component $F$ of $S\setminus\alpha'$ and proceed as in the preceding case.
\qed

\begin{figure}[!ht]
\begin{center}
	\includegraphics[width=.35\textwidth]{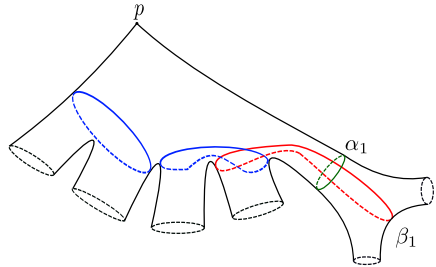}
	\caption{Case $m$ odd.}
	\label{CasobMayor1B}
\end{center}
\end{figure}

\begin{remark}
If $\alpha$ and $\beta$ are multicurves as constructed in the proof of Theorem~\ref{THM:Multicurves}, then $S\setminus\alpha\cup\beta$ is a family of polygons, each of which has either 2, 4, 6 or 8 sides. Hence, if $M=M(\alpha,\beta,\textbf{h})$ is given by the Hooper-Thurston-Veech construction, then the set $\mathfrak{V}$ defined in the proof of Theorem~\ref{THM:Loxodromics} is formed by regular points and conic singularities of total angle $\pi$, $3\pi$ or $4\pi$.
\end{remark}

\subsection{Proof of Corollary~\ref{Corollary:LoxodromicsConverging2Elliptic}} Our arguments use Figure~\ref{Fig:LoxosConvergeEliptico}. Let $\alpha$ and $\beta$ be the multicurves in blue and red illustrated in the figure; the union of these fills the surface $S$ in question (a Loch Ness monster). Let us write $\beta=\beta'\sqcup\{a_i\}_{i\in\N}\sqcup\{b_i\}_{i\in\N}$, and for each $n\in\N$ let $\beta_n:=\beta'\sqcup\{a_i\}_{i\geq n}\sqcup\{b_i\}_{i\geq n}$. Theorem~\ref{THM:Loxodromics} implies that $f_n:=T_\alpha\circ T_{\beta_n}^{-1}$ acts loxodromically on the loop graph $L(S;p)$ and hence it acts loxodromically on the main component of the completed ray graph $\mathcal{R}(S;p)$ (see Theorem~\ref{THM:CompletedRayGraph}). Remark that $f_n$ converges to $f=T_\alpha\circ T_{\beta'}^{-1}$ in the compact-open topology. On the other hand, $f$ fixes the short rays $l$ and $l'$ and hence it acts elliptically on both $\mathcal{R}(S;p)$ and $L(S;p)$. \qed

\begin{remark}
Using techniques similar to the ones presented in the proof of Theorem~\ref{THM:Multicurves} one can construct explicit sequences $(f_n)$ as above for any infinite-type surface $S$.
\end{remark}

\begin{figure}[!ht]
\begin{center}
	\includegraphics[scale=.5]{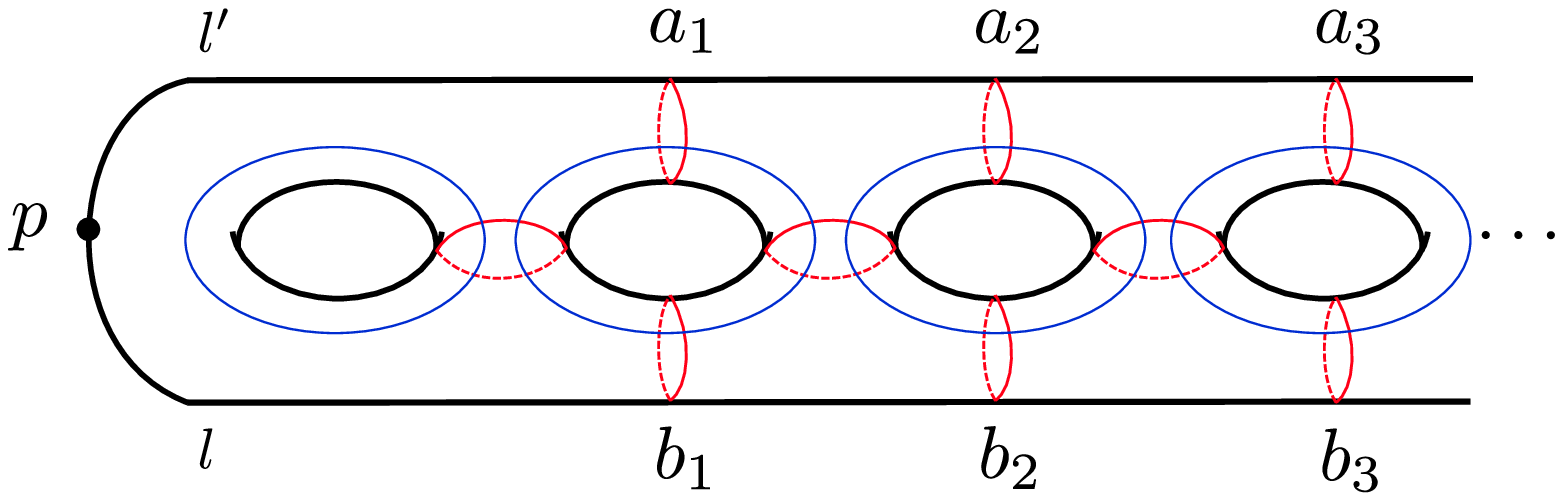}
	\caption{}
	\label{Fig:LoxosConvergeEliptico}
\end{center}
\end{figure}

\end{document}